\newtheorem{thm}{Theorem}[section]
\newtheorem{theorem}[thm]{Theorem}
\newtheorem {definition}[thm]{Definition}
\newtheorem{corollary}[thm]{Corollary}
\newtheorem{lemma}[thm]{Lemma}
\newtheorem{claim}{Claim}
\newtheorem{proposition}[thm]{Proposition}
\newtheorem{example}[thm]{Example}
\newtheorem{remark}[thm]{Remark}
\newtheorem* {problem}{Problem}
\numberwithin{equation}{section}
\begin{document}
\baselineskip=16pt
\title{Some remarks on the planar Kouchnirenko's Theorem}         
\author{Gert-Martin Greuel and Nguyen Hong Duc}
\address{Gert-Martin Greuel\newline \indent Universit\"{a}t Kaiserslautern, Fachbereich Mathematik, Erwin-Schr\"{o}dinger-Strasse, 
\newline \indent 67663 Kaiserslautern, Tel. +496312052850, Fax +496312054795}
%\url{http://www.mathematik.uni-kl.de/~greuel} 
\email{greuel@mathematik.uni-kl.de}
\address{Nguyen Hong Duc\footnote{Supported by DAAD (Germany) and NAFOSTED (Vietnam)}
\newline\indent Institute of Mathematics, 18 Hoang Quoc Viet Road, Cau Giay District \newline \indent  10307, Hanoi.} 
\email{nhduc@math.ac.vn}
\address{Universit\"{a}t Kaiserslautern, Fachbereich Mathematik, Erwin-Schr\"{o}dinger-Strasse,  
\newline \indent 67663 Kaiserslautern}
\email{dnguyen@mathematik.uni-kl.de}
\date{\today}            
\subjclass{Primary 14B05, 32S10, 32S25, 58K40..}
\keywords{Milnor number, delta-invariant, Newton non-degenerate, inner Newton non-degenerate, weak Newton non-degenerate}

\maketitle
%\tableofcontents
\begin{abstract}
We consider different notions of non-degeneracy, as introduced by Kouchnirenko (NND), Wall (INND) and Beelen-Pellikaan (WNND) for plane curve singularities $\{f(x,y) = 0\}$ and introduce the new notion of weighted homogeneous Newton non-degeneracy (WHNND). It is known that the Milnor number $\mu$ resp. the delta-invariant $\delta$ can be computed by explicit formulas $\mu_N$ resp. $\delta_N$ from the Newton diagram of $f$ if $f$ is NND resp. WNND. It was however unknown whether the equalities $\mu=\mu_N$ resp. $\delta=\delta_N$ can be characterized by a certain non-degeneracy condition on $f$ and, if so, by which one. We show that $\mu=\mu_N$ resp. $\delta=\delta_N$ is equivalent to INND resp. WHNND and give some applications and interesting examples related to the existence of "wild vanishing cycles". Although the results are new in any characteristic, the main difficulties arise in positive characteristic. 
\end{abstract}

\section{Introduction}
Let $K$ be an algebraically closed field, $K[[x]]=K[[x_1,\ldots,x_n]]$ the formal power series ring and $\mathfrak m$ its maximal ideal. Let us recall the definition of the Newton diagram and Wall's notion of a $C$-polytope (see \cite{Wal99}). To each power series $f =\sum_{\alpha}c_\alpha x^\alpha\in K[[x]]$ we can associate its {\em Newton polyhedron} $\Gamma_+(f)$ as the convex hull of the set
$$\bigcup_{\alpha\in\text{supp}(f)}(\alpha+\Bbb R_{\geq 0}^n).$$
where $\text{supp}(f)=\{\alpha| c_\alpha\neq 0\}$ denotes the support of $f$. This is an unbounded polytope in $\Bbb R^n$. We call the union $\Gamma (f)$ of its compact faces the {\em Newton diagram} of $f$. By $\Gamma_-(f)$ we denote the union of all line segments joining the origin to a point
on $\Gamma(f)$. We always assume that $f\in\mathfrak m$ if not explicitly stated otherwise.

If the Newton diagram of a singularity $f$ meets all coordinate axes we call $f$ {\em convenient}. However, not every isolated singularity is convenient, and one then has to enlarge the Newton diagram. A compact rational polytope $P$ of dimension $n-1$ in the positive orthant $\Bbb R^n_{\geq 0}$ is called a {\em $C$-polytope} if the region above $P$ is convex and if every ray in the positive orthant emanating from the origin meets $P$ in exactly one point. The Newton diagram of $f$ is a $C$-polytope iff $f$ is convenient.

We first introduce the different notions of non-degeneracy. For this let $f =\sum_{\alpha}c_\alpha x^\alpha\in \mathfrak m$ be a power series, let $P$ be a $C$-polytope and let $\Delta$ be a face of $P$. By $f_\Delta:=\textrm{in}_\Delta(f) :=\sum_{\alpha\in\Delta} c_\alpha x^\alpha$ we denote the initial form or principal part of f along $\Delta$. Following Kouchnirenko we call $f$ {\em non-degenerate} ND {\em along} $\Delta$ if the Jacobian ideal\footnote{The Jacobian ideal $j(f)$ denotes the ideal generated by all partials of $f\in K[[x]]$.} $\textrm{j}(f_\Delta)$ has no zero in the torus $(K^*)^n$. $f$ is then said to be {\em Newton non-degenerate} NND if $f$ is non-degenerate along each face (of any dimension) of the Newton diagram $\Gamma(f)$. We do not require $f$ to be convenient. 

To define inner non-degeneracy we need to fix two more notions. The face $\Delta$ is an {\em inner face} of $P$ if it is not contained in any coordinate hyperplane. Each point $q\in K^n$ determines a coordinate hyperspace $H_q =\bigcap_{q_i = 0} \{x_i=0\}\subset \Bbb R^n$ in $\Bbb R^n$. We call $f$ {\em inner non-degenerate} IND {\em along} $\Delta$ if for each zero $q$ of the Jacobian ideal $\textrm{j}(\textrm{in}_\Delta(f))$ the polytope $\Delta$ contains no point on $H_q$. $f$ is called {\em inner Newton non-degenerate} INND {\em w.r.t. a $C$-polytope} $P$ if no point of $\text{supp}(f)$ lies below $P$ and $f$ is IND along each inner face of $P$. We call $f$ simply {\em inner Newton non-degenerate} INND if it is INND w.r.t some $C$-polytope.

Finally, we call $f$ {\em weakly non-degenerate} WND {\em along} $\Delta$ if the Tjurina ideal\footnote{For $f\in K[[x]]$ we call $tj(f)=\langle f\rangle+j(f)$ the Tjurina ideal of $f$.} $tj(\textrm{in}_\Delta(f))$ has no zero in the torus $(K^*)^n$, and $f$ is called {\em weakly Newton non-degenerate} WNND if f is weakly non-degenerate along each top-dimensional face of $\Gamma(f)$. Note that NND implies WNND while NND does not imply INND and vice versa. See \cite[Remark 3.1]{BGM10} for facts on and relations between the different types of non-degeneracy.

For any compact polytope $Q$ in $\Bbb R^n_{\geq 0}$ we denote by $V_k(Q)$ the sum of the $k$-dimensional Euclidean volumes of the intersections of $Q$ with the $k$-dimensional coordinate subspaces of $\Bbb R^n$ and, following Kouchnirenko, we then call
$$\mu_N(Q) =\sum_{k=0}^n (-1)^{n-k}  k! V_k(Q)$$
the Newton number of $Q$. For a power series $f \in K[[x]]$ we define the {\em Newton number} of $f$ to be
$$\mu_N(f) = \sup\{ \mu_N(\Gamma_-(f_m)) | f_m:=f+x_1^m+\ldots+ x_n^m, m\geq 1 \}.$$
If $f$ is convenient then $$\mu_N(f) = \mu_N(\Gamma_-(f)).$$
The following theorem was proved by Kouchnirenko in arbitrary characteristic. We recall that $\mu(f):=\dim K[[x,y]]/j(f)$ is the {\em Milnor number} of $f$.
\begin{theorem}\cite{Kou76}\label{thm0}
For $f \in K[[x]]$ we have $\mu_N(f) \leq \mu(f)$, and if $f$ is NND and convenient then
$\mu_N(f) = \mu(f)<\infty$.
\end{theorem}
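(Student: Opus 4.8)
The plan is to isolate the essential assertion---that $\mu(f)=\mu_N(\Gamma_-(f))<\infty$ whenever $f$ is NND and convenient---and to deduce both the general inequality and the non-convenient case from it. Granting the equality, the inequality for a convenient $g$ follows by deformation: choose a family $g_t$ with the same Newton diagram as $g$, with $g_0=g$ and with coefficients generic on every face for $t\neq0$, so that $g_t$ is NND for generic $t$; upper semicontinuity of the fibre dimension $t\mapsto\dim_K K[[x]]/j(g_t)$ then gives $\mu(g)\geq\mu(g_t)=\mu_N(\Gamma_-(g_t))=\mu_N(\Gamma_-(g))$, the last step because the Newton number depends only on the diagram. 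For non-convenient $f$ one uses the definition $\mu_N(f)=\sup_m\mu_N(\Gamma_-(f_m))$ directly: each $f_m$ is convenient, hence $\mu_N(\Gamma_-(f_m))\leq\mu(f_m)$, and for $m$ large and prime to $\mathrm{char}\,K$ one has $\mu(f_m)=\mu(f)$ if $\mu(f)<\infty$ (the case $\mu(f)=\infty$ being vacuous); since $\mu_N(\Gamma_-(f_m))$ already equals the supremum for all large $m$, this yields $\mu_N(f)\leq\mu(f)$.

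For the core equality I would work with the \emph{Newton filtration}. Write the facets of $\Gamma(f)$ as $\{\alpha:\langle a_j,\alpha\rangle=N_j\}$ with primitive $a_j\in\mathbb Z_{>0}^n$ and $N_j>0$, put $\phi(\alpha)=\min_j\langle a_j,\alpha\rangle/N_j$ for $\alpha\in\mathbb R_{\geq0}^n$, and filter $K[[x]]$ by assigning $x^\alpha$ the Newton order $\phi(\alpha)$, so that the monomials lying on $\Gamma(f)$ have order $1$. The associated graded ring $G=\mathrm{gr}_\phi K[[x]]$ is the semigroup ring attached to the fan dual to $\Gamma(f)$, graded by $\phi$, and $\mathrm{gr}_\phi\!\big(K[[x]]/j(f)\big)=G/\mathrm{in}_\phi(j(f))$. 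The logarithmic derivatives $x_i\partial_i f$ are the natural objects, since $x_i\partial_i$ preserves the $\phi$-degree of each monomial; thus $\mathrm{in}_\phi(x_i\partial_i f)$ equals $x_i\partial_i f_\Delta$ on each facet $\Delta$ and has Newton order $1$.

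The decisive point is that NND forces $\mathrm{in}_\phi(j(f))=\big(\mathrm{in}_\phi(\partial_1 f),\dots,\mathrm{in}_\phi(\partial_n f)\big)$, that is, the initial forms of the generators already generate the initial ideal. An obstruction to this (in the sense of a standard-basis/Buchberger criterion) would manifest as a common zero of the face-initial ideal $j(f_\Delta)$ in the torus orbit attached to a face $\Delta$, which is precisely what non-degeneracy along $\Delta$ forbids; convenience controls the faces meeting the coordinate hyperplanes and guarantees finiteness. Granting this, the Koszul complex on the initial forms is exact, $G/\mathrm{in}_\phi(j(f))$ is finite-dimensional, and since passing to the associated graded of a finite-dimensional filtered module preserves total dimension, $\mu(f)=\dim_K K[[x]]/j(f)=\dim_K G/\mathrm{in}_\phi(j(f))<\infty$.

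It remains to identify $\dim_K G/\mathrm{in}_\phi(j(f))$ with $\mu_N(\Gamma_-(f))=\sum_{k=0}^n(-1)^{n-k}k!\,V_k(\Gamma_-(f))$. This is a self-contained combinatorial computation: the Poincar\'e series of the graded quotient is read off from the Koszul resolution in terms of the $\phi$-degrees of the $\partial_i f$, and a lattice-point and volume manipulation---additivity over the cones spanned by the facets of $\Gamma(f)$ together with the inclusion--exclusion defining the $V_k$---turns it into the alternating sum of volumes. The step I expect to be the main obstacle is the generation identity $\mathrm{in}_\phi(j(f))=(\mathrm{in}_\phi(\partial_i f))$ in positive characteristic, where the Euler relation and the principle that the partials detect the entire Newton diagram both fail (for instance $\partial_i x_i^p=0$); there the implication from ``$j(f_\Delta)$ has no torus zero'' to exactness of the Koszul complex on the initial forms must be established intrinsically from the torus action on the graded pieces of $G$, rather than borrowed from characteristic-zero identities.
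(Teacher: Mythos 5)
The paper offers no proof of this statement at all---Theorem \ref{thm0} is quoted from Kouchnirenko---so your sketch can only be measured against Kouchnirenko's original argument, whose architecture (Newton filtration, associated graded semigroup ring, Koszul complex on initial forms, combinatorial evaluation of the Poincar\'e series) you have reconstructed correctly in outline. But there is a genuine gap, and it sits exactly where this paper lives: your reduction of the inequality $\mu_N(f)\leq\mu(f)$ to the non-degenerate case by deforming coefficients fails in positive characteristic, because for a fixed Newton diagram there need not exist \emph{any} NND power series. If $\Gamma(g)$ has a vertex $\alpha\in(p\Bbb N)^n$ with $p=\mathrm{char}(K)>0$ (e.g.\ the vertex $(p,p)$ of $x^py^p+x^{3p}+y^{3p}$), then $j(c\,x^{\alpha})=0$ for every coefficient $c$, so every series with that diagram is degenerate along that vertex and no member $g_t$ of your family is NND, however generic $t$ is. (Compare Propositions \ref{pro11} and \ref{pro12} of the paper, which are devoted precisely to vertices with coordinates divisible by $p$.) Kouchnirenko proves the inequality directly for arbitrary $f$ in all characteristics, via the lower bound coming from the Euler characteristic of the graded Koszul complex---no deformation and no genericity; even in characteristic zero your route silently invokes the nontrivial fact that NND is generic for a fixed diagram, which is itself a theorem of Kouchnirenko's paper rather than something available for free.

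For the core equality, you explicitly defer the decisive step: that non-degeneracy along each face forces the initial forms of the generators to generate the initial ideal (equivalently, that they form a regular sequence in the graded ring), together with the lattice-volume computation identifying the resulting Poincar\'e series with $\sum_k(-1)^{n-k}k!\,V_k(\Gamma_-(f))$. These two steps \emph{are} the theorem; saying that an obstruction ``would manifest as a torus zero of $j(f_\Delta)$'' states what must be proved rather than proving it, and you yourself flag it as the main obstacle in characteristic $p$. There is also a formulation mismatch to repair: $\partial_i$ does not respect the Newton filtration, so the generation identity must be phrased for the logarithmic derivatives $x_i\partial_i f$, and the passage from $\dim_K K[[x]]/\langle x_1\partial_1 f,\ldots,x_n\partial_n f\rangle$ back to $\mu(f)=\dim_K K[[x]]/j(f)$ requires the inclusion--exclusion over coordinate subspaces that produces the $k!\,V_k$ terms---this is where convenience actually enters, and your sketch glosses over it. The non-convenient reduction via $f_m$ and finite determinacy is essentially fine (in characteristic $p$ one needs the determinacy bound of \cite{BGM10}, and monotonicity plus integrality makes the supremum stabilize), but as it stands the proposal is a road map of Kouchnirenko's proof with its hardest stretches unbuilt and an inequality argument that is wrong in positive characteristic.
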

Since Theorem \ref{thm0} does not cover all semi-quasihomogeneous singularities, Wall introduced the condition INND (denoted by NPND* in \cite{Wal99}). Using Theorem \ref{thm0}, Wall proved the following theorem for $K=\Bbb C$ which was extended to arbitrary $K$ in \cite{BGM10}.
\begin{theorem}\cite{Wal99}, \cite{BGM10} \label{thm1}
If $f \in K[[x]]$ is INND, then
$$\mu(f)=\mu_N(f)=\mu_N(\Gamma_-(f))<\infty.$$
\end{theorem}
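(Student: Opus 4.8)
The plan is to deduce the statement from Kouchnirenko's Theorem~\ref{thm0}, which already furnishes $\mu_N(f)\le\mu(f)$. Granting this, what remains is finiteness of $\mu(f)$, the reverse inequality $\mu(f)\le\mu_N(f)$, and the combinatorial identities $\mu_N(f)=\mu_N(\Gamma_-(f))=\mu_N(P_-)$, where $P$ is a $C$-polytope with respect to which $f$ is INND and $P_-$ is the region it bounds below. The latter identities I would treat as a volume computation: since $\operatorname{supp}(f)$ lies on or above $P$, the regions $\Gamma_-(f)$, $P_-$, and the cones arising after adjoining high powers $x_1^m+\cdots+x_n^m$ all define the same Newton number, so the supremum defining $\mu_N(f)$ is attained and equals $\mu_N(P_-)$. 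The real work is thus to prove $\mu(f)\le\mu_N(P_-)$.

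For this I would introduce the Newton filtration $\nu$ attached to $P$ and pass to the associated graded algebra $G=\mathrm{gr}_\nu K[[x]]$, for which every ideal $I$ satisfies $\dim_K K[[x]]/I=\dim_K G/\textrm{in}_\nu(I)$ since the filtration is separated. Applied to the Jacobian ideal this gives $\mu(f)=\dim_K G/\textrm{in}_\nu(j(f))$, and as $\textrm{in}_\nu(j(f))$ contains the initial forms $\textrm{in}_\nu(\partial_1 f),\ldots,\textrm{in}_\nu(\partial_n f)$ of the generators, we obtain
\[
\mu(f)\ \le\ \dim_K G/(\textrm{in}_\nu(\partial_1 f),\ldots,\textrm{in}_\nu(\partial_n f)).
\]
The whole theorem is thereby reduced to the single assertion that the right-hand side equals $\mu_N(P_-)$; in particular this also settles finiteness.

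The heart of the matter is to evaluate this colength, which is precisely where inner non-degeneracy enters and where the positive-characteristic difficulties concentrate. Face by face, the plan is to identify $\textrm{in}_\nu(\partial_i f)$ with $\partial_i(\textrm{in}_\Delta f)$ along each inner face $\Delta$ and then to count common zeros by a Bernstein--Kouchnirenko volume argument: IND along $\Delta$ forbids zeros of $j(\textrm{in}_\Delta f)$ in the torus $(K^*)^n$, while for a zero $q$ with some vanishing coordinates the condition that $\Delta$ avoid $H_q$ prevents any surplus multiplicity from the lower-dimensional strata measured by the volumes $V_k$; summing the face contributions then reproduces $\mu_N(P_-)=\sum_{k}(-1)^{n-k}k!\,V_k(P_-)$. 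I expect the principal obstacle to be that in characteristic $p$ the identification $\textrm{in}_\nu(\partial_i f)=\partial_i(\textrm{in}_\Delta f)$ can fail: the right-hand side may vanish when an exponent is divisible by $p$, forcing the true initial form of $\partial_i f$ to higher $\nu$-order where it may acquire spurious zeros. Showing that such characteristic-$p$ degenerations never enlarge the colength beyond $\mu_N(P_-)$---rather than the formal reduction of the previous paragraph---is the genuine content of the proof.
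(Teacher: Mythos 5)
You should first note what the paper actually does with this statement: it does not prove Theorem~\ref{thm1} at all, but quotes it from \cite{Wal99} (for $K=\Bbb C$) and \cite{BGM10} (arbitrary characteristic), with the remark that Wall's proof proceeds \emph{by reduction to Kouchnirenko's Theorem~\ref{thm0}}: under INND w.r.t.\ a $C$-polytope $P$ one modifies $f$ (adding suitable, generically chosen monomials so as to make the series convenient and NND without changing $\mu$ or the relevant Newton numbers), applies Theorem~\ref{thm0} to the modified series, and concludes with semicontinuity of $\mu$ together with combinatorial lemmas comparing the Newton numbers of the polytopes involved. Your proposal uses Theorem~\ref{thm0} only for the inequality $\mu_N(f)\le\mu(f)$ and attempts the reverse inequality $\mu(f)\le\mu_N(P_-)$ from scratch via the Newton filtration, i.e.\ you propose to re-derive Kouchnirenko-type graded machinery in the INND setting rather than reduce to the convenient NND case.

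That route has a genuine gap, and it sits exactly where you place the ``heart of the matter.'' Your formal reduction is fine: since the Newton filtration of a $C$-polytope is comparable to the $\mathfrak m$-adic one, $\mu(f)\le\dim_K G/(\mathrm{in}_\nu(\partial_1 f),\ldots,\mathrm{in}_\nu(\partial_n f))$ does hold, and the glossed combinatorial identities $\mu_N(f)=\mu_N(\Gamma_-(f))=\mu_N(P_-)$ are fixable by monotonicity arguments in the spirit of Lemma~\ref{lm11}. But the load-bearing claim, that the colength on the right equals $\mu_N(P_-)$ under INND, is asserted rather than proven, and the sketch you offer for it does not address the actual difficulty. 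A Bernstein--Kouchnirenko argument counts zeros in the torus $(K^*)^n$ of systems with \emph{generic} coefficients, whereas the quantity here is the local colength at the origin of one specific, non-generic system, to which all coordinate strata contribute; evaluating such colengths is precisely the content of Kouchnirenko's Th\'eor\`eme~I-type computation (Koszul complex plus delicate combinatorics), which is available only in the convenient ND setting. Extending it to INND, especially in characteristic $p$ where, as you yourself note, $\mathrm{in}_\nu(\partial_i f)$ need not agree with $\partial_i(\mathrm{in}_\Delta f)$ (partials can vanish identically on faces), is of essentially the same depth as the theorem being proved --- and it is not even clear a priori that your claimed equality of colengths holds, since only the containment $(\mathrm{in}_\nu(\partial_1 f),\ldots,\mathrm{in}_\nu(\partial_n f))\subset \mathrm{in}_\nu(j(f))$ is automatic, while you need the upper bound $\dim_K G/(\mathrm{in}_\nu(\partial_i f))\le\mu_N(P_-)$. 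As it stands the proposal is a correct reduction plus an explicit admission that the main lemma is missing; the cited proofs avoid this entirely by deforming into the range where Theorem~\ref{thm0} applies.
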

Kouchnirenko proved that the condition "convenient" is not necessary in Theorem \ref{thm0} if $char(K)=0$. The authors in \cite{BGM10} show that in the planar case Kouchnirenko's result holds in arbitrary characteristic without the assumption that $f$ is convenient (allowing $\mu(f)=\infty$):
\begin{proposition}\cite[Proposition 4.5]{BGM10}\label{pro}
Suppose that $f \in K[[x, y]]$ is NND, then $\mu_N(f) = \mu(f)$.
\end{proposition}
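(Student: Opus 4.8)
The plan is to prove the two inequalities separately. The inequality $\mu_N(f)\le\mu(f)$ is furnished by Theorem~\ref{thm0}, so the whole task is to establish $\mu(f)\le\mu_N(f)$. If $f$ is convenient this is immediate from Theorem~\ref{thm0}, so I assume $f$ is not convenient; after possibly interchanging $x$ and $y$ I may assume the Newton diagram does not meet the $x$-axis, equivalently $f(x,0)\equiv 0$, i.e. $y\mid f$. Writing $f=x^{\ell}y^{k}h$ with $x\nmid h$, $y\nmid h$ and $k\ge 1$, the lowest vertex of $\Gamma(f)$ on the side of the $x$-axis sits at height $k$ and the leftmost vertex at horizontal distance $\ell$. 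The proof then splits according to whether $k,\ell\le 1$ or not.

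Suppose first $k\le 1$ and $\ell\le 1$, so that $k=1$. For $m\gg 0$ with $\mathrm{char}(K)\nmid m$ I would show that $f_m=f+x^m+y^m$ is \emph{both} convenient and NND. Convenience is clear. For NND note that, for $m$ large, the diagram of $f_m$ consists of the compact faces of $\Gamma(f)$, along which $\mathrm{in}_\Delta(f_m)=\mathrm{in}_\Delta(f)$ is non-degenerate since $f$ is NND, together with at most one new segment towards each missed axis and the two new vertices $(m,0),(0,m)$. Along the new segment towards the $x$-axis, joining the lowest vertex $(a_0,1)$ to $(m,0)$, one has $\mathrm{in}_\Delta(f_m)=c\,x^{a_0}y+x^m$, and here $\partial_y\,\mathrm{in}_\Delta(f_m)=c\,x^{a_0}$ is a monomial not vanishing on $(K^*)^2$, so the Jacobian ideal has no torus zero and $f_m$ is ND there (symmetrically towards the $y$-axis when $\ell=1$, using $\partial_x$); at $(m,0)$ one has $\mathrm{in}_\Delta(f_m)=x^m$ with Jacobian ideal $(m\,x^{m-1})$, which has no torus zero precisely because $\mathrm{char}(K)\nmid m$. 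Thus $f_m$ is NND and convenient, so Theorem~\ref{thm0} gives $\mu(f_m)=\mu_N(\Gamma_-(f_m))<\infty$, a value which stabilises as $m\to\infty$ since the leading coefficient computed below vanishes when $k,\ell\le1$. Invoking finite determinacy, valid in any characteristic for isolated singularities, for $m$ larger than this stable Milnor number the series $f$ and $f_m$ share a jet of order exceeding $\mu(f_m)$ and $f_m$ is isolated, whence $\mu(f)=\mu(f_m)$. Combining, $\mu(f)=\mu_N(\Gamma_-(f_m))\le\sup_{m}\mu_N(\Gamma_-(f_m))=\mu_N(f)$, which together with Theorem~\ref{thm0} yields $\mu(f)=\mu_N(f)$.

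Suppose now $k\ge 2$, the case $\ell\ge 2$ being symmetric; I claim both invariants are infinite and no non-degeneracy hypothesis is needed. Since $y^{2}\mid f$, a direct computation shows $\partial_x f$ and $\partial_y f$ share the factor $y^{k-1}$, so the whole line $\{y=0\}$ lies in the critical locus and $\mu(f)=\infty$. On the other hand $\mu_N$ is a pure volume count: for $f_m$ the area $V_2(\Gamma_-(f_m))$ grows like $\tfrac12(k+\ell)m$ while the axis term $V_1(\Gamma_-(f_m))$ grows like $(\text{number of missed axes})\cdot m$, so $\mu_N(\Gamma_-(f_m))=2V_2-V_1+1$ has leading term $\bigl(\max(k-1,0)+\max(\ell-1,0)\bigr)m$; when $k\ge2$ this coefficient is positive, hence $\mu_N(f)=\sup_m\mu_N(\Gamma_-(f_m))=\infty=\mu(f)$.

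I expect the main obstacle to be the verification that $f_m$ is NND in the finite case, which is exactly where positive characteristic bites: the argument hinges on $k,\ell\le 1$ forcing the relevant partial of the face initial form to be a single monomial, so that the possible vanishing of the other partial in characteristic $p$ is harmless, and on choosing $m$ coprime to $\mathrm{char}(K)$ to keep the new vertices non-degenerate. The second delicate point is the appeal to finite determinacy over a field of positive characteristic in order to transport the equality $\mu=\mu_N$ from $f_m$ back to $f$; by contrast the infinite case is essentially a bookkeeping of factors and Euclidean volumes.
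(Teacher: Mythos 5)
The paper gives no internal proof of this proposition: it is quoted verbatim from \cite[Prop.\ 4.5]{BGM10}. Measured against that source, your blind reconstruction follows the same overall plan (settle the convenient case by Theorem \ref{thm0}; in the non-convenient case split off the monomial factor $x^{\ell}y^{k}$ and separate the regime $k,\ell\le 1$ from $\max(k,\ell)\ge 2$), and its substantive steps check out. For $k=1$, $\ell\le 1$ and $m\gg 0$ the only new faces of $\Gamma(f_m)$ are indeed the segment from $(a_0,1)$ to $(m,0)$ (and symmetrically from $(1,b_0)$ to $(0,m)$ when $\ell=1$), whose relative interior contains no lattice point since it crosses no integer height strictly between $0$ and $1$; hence $\mathrm{in}_\Delta(f_m)=c\,x^{a_0}y+x^m$ has the monomial partial $\partial_y=c\,x^{a_0}$ and is ND in every characteristic, while the new vertices $(m,0),(0,m)$ require exactly your hypothesis $p\nmid m$, and all old faces keep their initial forms, where NND of $f$ applies. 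The stabilisation of $\mu_N(\Gamma_-(f_m))$ (the triangle contribution $m$ to $2V_2$ cancelling against the axis intercept $m$) is correct, as are the growth rate $\bigl(\max(k-1,0)+\max(\ell-1,0)\bigr)m$ and the divisibility of both partials by $y^{k-1}$ in the infinite case, where no non-degeneracy is needed.

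The one step that needs repair is the determinacy transfer. In positive characteristic the inference ``$f$ and $f_m$ share a jet of order exceeding $\mu(f_m)$, hence $\mu(f)=\mu(f_m)$'' is not available: $(\mu+1)$-determinacy is Tougeron's characteristic-zero bound. What holds in every characteristic (proved in \cite{BGM10} itself, following Greuel--Kr\"oning) is that $g$ with $\mu(g)<\infty$ is right $\bigl(2\mu(g)-\mathrm{mt}(g)+2\bigr)$-determined; so you should take $m>2\mu_0+2$ with $p\nmid m$, where $\mu_0$ is the stable value of $\mu(f_m)=\mu_N(\Gamma_-(f_m))$, and apply the determinacy of $f_m$ (whose Milnor number you control) to $f$, obtaining $f\sim f_m$ and $\mu(f)=\mu_0\le \mu_N(f)$, which together with Theorem \ref{thm0} closes the proof. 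Alternatively you can avoid determinacy altogether: $\mathfrak m^{\mu_0}\subseteq j(f_m)$ by a Nakayama argument, and since $j(f)$ and $j(f_m)$ have generators differing only by $mx^{m-1}, my^{m-1}\in\mathfrak m^{m-1}$, Krull intersection and Nakayama give $j(f)=j(f_m)$ for $m\gg 0$. You flagged this point yourself as delicate, and since $m$ may be taken arbitrarily large the fix is purely quantitative; with it your proof is complete.
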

\section{Milnor number}
In the following we consider only the case of plane curve singularities. The main result of this section says that for $f\in K[[x,y]]$, the condition $\mu(f)=\mu_N(f)<\infty$ is equivalent to $f$ being INND (Theorem \ref{thm11}). In characteristic zero this is also equivalent to $f$ being NND and $\mu_N(f)<\infty$ (Corollary \ref{cor13}). However, in positive characteristic, this is in general not true as the following example shows.
\begin{example}{\rm
$f=x^3+xy+y^3$ in characteristic 3 satisfies $\mu(f)=\mu_N(f)=1$ but $f$ is not NND.
}\end{example}
\begin{remark}\label{rm11}{\rm
Let $f \in K[[x,y]]$ be convenient and $A_i=(c_i,e_i), i=0,\ldots,k$ the vertices of $\Gamma(f)$ with $c_0=e_k=0,c_{i}<c_{i+1}$ and $e_{i}>e_{i+1}$. Then
$$\mu_N(f) =2V_2 (\Gamma_-(f))-c_k-e_0+1.$$
}
\end{remark}

\begin{lemma}\label{lm11}
Let $f,g\in K[[x,y]]$ be convenient such that $\Gamma_-(f)\subset \Gamma_-(g)$. Then
\begin{itemize}
\item[(a)] $\mu_N(f)\leq \mu_N(g)$.
\item[(b)] The equality holds if and only if  $\Gamma_-(f)\cap \Bbb R^2_{\geq 1}= \Gamma_-(g)\cap \Bbb R^2_{\geq 1}$, where 
$$\Bbb R^2_{\geq 1}=\{(x,y)\in\Bbb R^2| x\geq 1, y\geq 1\}.$$
\end{itemize}
\end{lemma}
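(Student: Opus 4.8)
The plan is to translate the statement, via the area formula of Remark~\ref{rm11}, into a two-dimensional estimate and then to exploit that all vertices are lattice points. Write the vertices of $\Gamma(g)$ as $B_j=(c_j',e_j')$, $j=0,\ldots,l$, with $c_0'=e_l'=0$, so that $c_l',e_0'$ are the $x$- and $y$-intercepts of $\Gamma(g)$ while $c_k,e_0$ are those of $\Gamma(f)$. From $\Gamma_-(f)\subseteq\Gamma_-(g)$, intersecting with the two axes gives $c_k\le c_l'$ and $e_0\le e_0'$, and comparing areas gives $V_2(\Gamma_-(f))\le V_2(\Gamma_-(g))$. Putting $D:=\Gamma_-(g)\setminus\Gamma_-(f)$, Remark~\ref{rm11} turns the difference into
\[
\mu_N(g)-\mu_N(f)=2\,V_2(D)-(c_l'-c_k)-(e_0'-e_0),
\]
so that (a) is equivalent to $2\,V_2(D)\ge(c_l'-c_k)+(e_0'-e_0)$, and (b) to the assertion that equality holds exactly when $D\cap\Bbb R^2_{\ge1}$ has measure zero, which for these polygons is the condition $\Gamma_-(f)\cap\Bbb R^2_{\ge1}=\Gamma_-(g)\cap\Bbb R^2_{\ge1}$.

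For the inequality I would slice $D$ by the two unit strips along the axes. Describe $\Gamma_-(f)$ as the region under a convex non-increasing piecewise-linear graph $x=\psi_f(y)$ on $[0,e_0]$ with $\psi_f(0)=c_k$, and $\psi_g$ analogously; then $\psi_f\le\psi_g$ and the width of $D$ at height $y$ is $w(y)=\psi_g(y)-\psi_f(y)\ge0$ with $w(0)=c_l'-c_k$. The essential observation is that the breakpoints of $\psi_f,\psi_g$ occur only at the \emph{integer} heights $e_i,e_j'$; since there is no integer in $(0,1)$ and $e_0,e_0'\ge1$, both functions are linear on $[0,1]$, hence so is $w$. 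The trapezoidal rule is therefore exact and gives
\[
V_2\bigl(D\cap\{0\le y\le1\}\bigr)=\int_0^1 w(y)\,dy=\tfrac12\bigl(w(0)+w(1)\bigr)\ge\tfrac12\,(c_l'-c_k),
\]
with equality iff $w(1)=0$; describing $\Gamma_-$ instead as the region under $y=\phi_f(x)$ and slicing vertically yields, by symmetry, $V_2(D\cap\{0\le x\le1\})\ge\tfrac12(e_0'-e_0)$, with the analogous equality criterion at $x=1$. Integrality is \emph{indispensable} here: for convex diagrams with non-integral vertices the inequality is false, since a thin sliver of $\Gamma_-(g)$ hugging an axis extends an intercept by $\varepsilon$ while adding area $\ll\varepsilon$, and so lowers $\mu_N$.

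It remains to add the two strip estimates without double-counting and to read off equality. In the main case $(1,1)\in\Gamma_-(f)$ one has $[0,1]^2\subseteq\Gamma_-(f)$, so $D$ is disjoint from the corner square and decomposes up to measure zero as the disjoint union of its bottom strip, its left strip and its core $D\cap\Bbb R^2_{>1}$; summing the two estimates gives $2\,V_2(D)\ge(c_l'-c_k)+(e_0'-e_0)+2\,V_2(D\cap\Bbb R^2_{>1})$, which proves (a) and shows that equality forces the core to have measure zero. Conversely, if the core has measure zero then $\psi_f(1)\ge1$ (again $(1,1)\in\Gamma_-(f)$) shows that a positive $w(1)$ would push points of $D$ into $\Bbb R^2_{>1}$, so both strip estimates are automatically tight; this yields (b). I expect the main obstacle to be precisely this last bookkeeping: establishing the corner disjointness and the continuity argument converting ``core of measure zero'' into ``tight strips'', together with disposing of the finitely many degenerate configurations where $(1,1)\notin\Gamma_-(f)$ (forced by $c_k=1$ or $e_0=1$), which must be checked directly.
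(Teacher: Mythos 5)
Your argument is correct in the main case and, though packaged differently, runs on the same engine as the paper's proof: integrality of the vertices of the Newton diagram makes the boundary graphs linear on the unit strips along the axes, so those strips contribute exactly half of the intercepts, and everything reduces to the area of what happens in $\Bbb R^2_{\geq 1}$. The paper organizes this as an exact formula: it introduces the cone $\Gamma_1(f)$ joining the origin to $\Gamma(f)\cap\Bbb R^2_{\geq 1}$, observes that the two remaining pieces of $\Gamma_-(f)$ are triangles of height exactly $1$ with areas $c_k/2$ and $e_0/2$ (precisely the same integrality input as your trapezoid rule), and obtains $\mu_N(f)=2V_2(\Gamma_1(f))+1$; then (a) is monotonicity of area under $\Gamma_1(f)\subset\Gamma_1(g)$, and (b) is immediate because nested closed regions of equal area coincide. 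You instead work with the difference region $D=\Gamma_-(g)\setminus\Gamma_-(f)$ and derive the slack identity $2V_2(D)=(c_l'-c_k)+(e_0'-e_0)+w(1)+v(1)+2V_2(D\cap\Bbb R^2_{>1})$ (writing $v$ for the vertical analogue of $w$). This is marginally more informative, since it exhibits the exact defect of equality, but it costs you the tightness bookkeeping in (b) that the paper's formula sidesteps: the argument that a measure-zero core forces $w(1)=v(1)=0$. That argument does go through as you sketch it --- in the main case $\psi_f(1)\geq 1$, so $w(1)>0$ would, by continuity of $w$ and monotonicity of $\psi_g$, create a sliver of positive area in $D\cap\Bbb R^2_{>1}$ --- but it should be written out, including the converse direction where a strict inclusion of the sets $\Gamma_-(\cdot)\cap\Bbb R^2_{\geq 1}$ must be converted into positive area of the core.

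Two remarks on the deferred case $(1,1)\notin\Gamma_-(f)$. First, it is not ``finitely many degenerate configurations'': it is the infinite family $c_k=1$ or $e_0=1$ (your dichotomy is correct; integrality of the vertices does force one of these). It is, however, routine: say $e_0=1$; then lattice vertices force $\Gamma(f)$ to be the single segment from $(0,1)$ to $(c_k,0)$, whence $\mu_N(f)=0$ and $\Gamma_-(f)\cap\Bbb R^2_{\geq 1}=\emptyset$, while your own strip computation applied to $\Gamma_-(g)$ alone gives $\mu_N(g)=\psi_g(1)+\phi_g(1)-1+2V_2\bigl(\Gamma_-(g)\cap\Bbb R^2_{>1}\bigr)\geq 1$ whenever $(1,1)\in\Gamma_-(g)$, and $\mu_N(g)=0$ with $\Gamma_-(g)\cap\Bbb R^2_{\geq 1}=\emptyset$ otherwise; both (a) and (b) follow in all subcases. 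Second, while you should carry out this check rather than defer it, you are not behind the paper in rigor here: the paper's proof is silent on exactly this case, and its three-part decomposition of $\Gamma_-(f)$ literally fails there --- for $f=x+y$ one has $\Gamma_1(f)=\emptyset$ and $\mu_N(f)=0\neq 2V_2(\Gamma_1(f))+1$.
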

Part (a) of the lemma was also shown in \cite[Coro. 5.6]{Biv09}. Let us denote by $\Gamma_1(f)$ the cone joining the origin with $\Gamma(f)\cap \Bbb R^2_{\geq 1}$. (cf. Fig. 1). 

\setlength{\unitlength}{0.2cm}
\centerline{\begin{picture}(20,22)(0,-3)
\linethickness{0.05mm}
\multiput(0,0)(2,0){9}%
{\line(0,1){16}}
\multiput(0,0)(0,2){8}%
{\line(1,0){18}}
\put(0,0){\line(1,6){1.8}}
\put(0,0){\line(6,1){12}}
\put(0,14){\line(2,-3){4}}
\put(4,8){\line(1,-1){4}}
\put(8,4){\line(2,-1){8}}
\put(0,0){\circle*{0.8}}
\put(0,14){\circle*{0.8}}
\put(2,11){\circle*{0.8}}
\put(4,8){\circle*{0.8}}
\put(8,4){\circle*{0.8}}
\put(12,2){\circle*{0.8}}
\put(16,0){\circle*{0.8}}
\put(-1.4,-1.4){$0$}
\put(-2,14){$e_0$}
\put(15,-2){$c_k$}
\put(1.5,-1.4){$1$}
\put(-1.4,1.5){$1$}
\linethickness{5mm}
\put(4,8){\line(-1,-1){3.2}}
\put(8,4){\line(-1,-1){3.2}}
\put(6,6){\line(-1,-1){6}}
\put(5,7){\line(-1,-1){4.6}}
\put(3.2,9.2){\line(-1,-1){2}}
\put(7,5){\line(-1,-1){4.6}}
\put(9.3,3.3){\line(-1,-1){2.1}}
\put(-8,8){$\Gamma_1(f)$}
\put(20,6){$\Gamma(f)$}
\put(-4,7){\vector(3,-1){8}}
\put(20,6){\vector(-4,-1){10.6}}
\put(7,-5){Fig. 1.}
\end{picture}}
\begin{proof}
First, we prove that $$\mu_N(f)=V_2(\Gamma_1(f))+1.$$
It is easy to see that $\Gamma_1(f)$ divides $\Gamma_-(f)$ into three parts whose volumes are $c_k/2,V_2(\Gamma_1(f))$ and $e_0/2$. Therefore
$$\mu_N(f) =2V_2 (\Gamma_-(f))-c_k-e_0+1=2V_2 (\Gamma_1(f))+1.$$
(a) Clearly, if $\Gamma_-(f)\subset \Gamma_-(g)$ then $\Gamma_1(f)\subset \Gamma_1(g)$ and hence 
$$\mu_N(f)=V_2(\Gamma_1(f))+1\leq 2V_2(\Gamma_1(g))+1=\mu_N(g).$$
(b) follows easily from the formula $\mu_N(f)=2V_2(\Gamma_1(f))+1.$
\end{proof}
We recall some classical notions. Let $f\in K[[x,y]]$ be irreducible. A couple $(x(t),y(t))\in K[[t]]^2$ is called a {\em (primitive) parametrization} of $f$, if $f(x(t),y(t))=0$ and if the following universal factorization property holds: for each $(u(t),v(t))\in K[[t]]^2$ with $f(u(t),v(t))=0$, there exists a unique series $h(t)\in K[[t]]$ such that $u(t)=x(h(t)) \text{ and } v(t)=y(h(t)).$ 

If $g\in K[[x,y]]$ is irreducible and $(x(t),y(t))$ its parametrization, then the {\em intersection multiplicity} of any $f\in K[[x,y]]$ with $g$ is given by $i(f,g)=\mathrm{ord} f(x(t),y(t))$, and if $u$ is a unit then $i(f,u)=0$.
The {\em intersection multiplicity} of $f$ with a reducible power series $g=g_1\cdot\ldots\cdot g_s$ is defined to be the sum $i(f,g)=i(f,g_1)+\ldots+i(f,g_s).$

\begin{proposition}\cite[Pro. 3.12]{GLS06}\label{pro0}
Let $f,g\in K[[x,y]]$. Then
$$i(f,g)=i(g,f)=\dim K[[x,y]]/\langle f,g \rangle.$$
\end{proposition}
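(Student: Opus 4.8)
The plan is to establish the single equality $i(f,g)=\dim_K K[[x,y]]/\langle f,g\rangle$ for all $f,g\in K[[x,y]]$; since the right-hand side is manifestly symmetric in its two arguments, the symmetry $i(f,g)=i(g,f)$ is then automatic and needs no separate argument. I would first dispose of the degenerate case in which $f$ and $g$ share a common irreducible factor $h$. Along a parametrization of $h$ both $f$ and $g$ vanish identically, so $i(f,g)=\infty$; on the other hand $\langle f,g\rangle\subseteq\langle h\rangle$, so $K[[x,y]]/\langle f,g\rangle$ surjects onto $K[[x,y]]/\langle h\rangle$, a ring of infinite $K$-dimension, whence the colength is $\infty$ as well. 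Thus both sides agree, and from now on I may assume that $f$ and $g$ are coprime in the factorial ring $K[[x,y]]$, in which case I expect both sides to be finite.

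The second step reduces the claim to the case where the parametrized series is irreducible. Factor $g=g_1\cdots g_s$ into irreducibles, each coprime to $f$, and set $h=g_2\cdots g_s$ (so $g=g_1h$). Multiplication by $h$ yields a sequence
$$0\longrightarrow K[[x,y]]/\langle f,g_1\rangle \stackrel{\cdot h}{\longrightarrow} K[[x,y]]/\langle f,g\rangle \longrightarrow K[[x,y]]/\langle f,h\rangle\longrightarrow 0,$$
whose exactness on the right is immediate (the image of $\cdot h$ is $\langle f,h\rangle/\langle f,g\rangle$) and whose injectivity on the left follows from coprimality of $f$ and $h$: if $hu\in\langle f,g\rangle$ then $f\mid h(u-bg_1)$ for a suitable $b$, hence $f\mid(u-bg_1)$, so $u\in\langle f,g_1\rangle$. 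Taking dimensions and inducting on $s$ gives the additivity $\dim_K K[[x,y]]/\langle f,g\rangle=\sum_{j=1}^s \dim_K K[[x,y]]/\langle f,g_j\rangle$, which matches the defining additivity $i(f,g)=\sum_j i(f,g_j)$. Hence it suffices to prove the equality when $g$ is irreducible.

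For $g$ irreducible with parametrization $(x(t),y(t))$, the universal factorization property says precisely that the kernel of the substitution homomorphism $\varphi\colon K[[x,y]]\to K[[t]]$, $x\mapsto x(t),\,y\mapsto y(t)$, equals $\langle g\rangle$. Thus $R:=K[[x,y]]/\langle g\rangle$ is isomorphic to the subring $\varphi(K[[x,y]])\subseteq K[[t]]$, and $K[[x,y]]/\langle f,g\rangle\cong R/\bar f R$ with $\bar f=f(x(t),y(t))$, a nonzerodivisor in the domain $R$ because $f$ and $g$ are coprime. The branch ring $R$ is one-dimensional with normalization $\overline R=K[[t]]$, and the crucial input is that $\overline R$ is a finite $R$-module, so $\delta:=\dim_K \overline R/R<\infty$. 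I would then compare $\dim_K R/\bar f R$ with $\dim_K \overline R/\bar f\,\overline R$ by applying the snake lemma to multiplication by $\bar f$ on $0\to R\to\overline R\to \overline R/R\to 0$: since $\bar f$ is a nonzerodivisor on both $R$ and $\overline R$ while $\overline R/R$ has finite $K$-dimension (on which the $K$-linear endomorphism $\bar f$ has kernel and cokernel of equal dimension), the alternating sum of dimensions forces the two colengths to coincide. Finally $\overline R/\bar f\,\overline R=K[[t]]/(t^{m})$ with $m=\mathrm{ord}\,f(x(t),y(t))=i(f,g)$, which closes the argument.

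The main obstacle is the normalization comparison in the last step: the whole argument rests on the finiteness of $\delta=\dim_K\overline R/R$ (equivalently, that $K[[t]]$ is module-finite over the branch ring $R$) together with the homological bookkeeping that turns this finiteness, via the snake lemma, into the equality $\dim_K R/\bar f R=\dim_K\overline R/\bar f\,\overline R$. Everything preceding it, namely the symmetry reduction, the common-factor case, and the additivity over branches, is formal once this quantitative comparison between a plane branch and its normalization is in hand.
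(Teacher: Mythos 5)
Your proof is correct and takes essentially the same route as the paper, which gives no proof of its own but cites \cite[Prop.\ 3.12]{GLS06}; that proof runs exactly along your lines (reduce to $g$ irreducible by additivity of both sides, identify $K[[x,y]]/\langle f,g\rangle$ with $R/\bar f R$ for the branch ring $R$, and compare with the normalization $\overline{R}=K[[t]]$ via $\dim_K \overline{R}/R<\infty$ and the snake lemma). Only two spots are stated more briskly than warranted: $\ker\varphi=\langle g\rangle$ does not follow verbatim from the universal factorization property (one still needs that $\ker\varphi$ is a prime ideal containing the height-one prime $\langle g\rangle$ and is not the maximal ideal of the two-dimensional regular ring $K[[x,y]]$), and in positive characteristic the module-finiteness of $\overline{R}$ over $R$ is the genuinely nontrivial input (available in \cite{Cam80}), which is precisely why the paper remarks that the proof in \cite{GLS06}, given there for $K=\Bbb C$, works in any characteristic.
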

The proof in \cite{GLS06} was given for $K=\Bbb C$ but works in any characteristic.

Let $f=\sum_{i,j}c_{ij}x^{i}y^{j}\in K[[x,y]]$ and $\Gamma(f)$ be its Newton diagram. We call
$$f_{in}:=\sum_{(i,j)\in\Gamma(f)} c_{ij}x^{i}y^{j}$$
the {\em initial part} of $f$.
\begin{proposition}\label{pro1}
Let $f\in\mathfrak m\subset K[[x,y]]$ be irreducible, $x$-general of order $m$ and $y$-general of order $n$. Let $(x(t),y(t))$ be parametrization of $f$. Then
\begin{itemize}
\item[(a)] $\mathrm{ord} (x(t))=n$ and $\mathrm{ord} (y(t))=m$.
\item[(b)] The Newton diagram of $f$ is the straight line segment.
\item[(c)] There exist $\xi, \lambda\in K^*$ such that 
$$f_{in}(x,y)=\xi\cdot (x^{m/q}-\lambda y^{n/q})^q,$$
where $q=(m,n)$.
\end{itemize}
\end{proposition}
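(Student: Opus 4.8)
The plan is to dispatch (a) directly from the symmetry of the intersection multiplicity, and then to obtain (b) and (c) together from the irreducibility of $f$ via a Newton-polygon (Hensel) factorization argument. For (a), I would compute $i(f,x)$ in two ways using Proposition \ref{pro0}. Taking the parametrization $(0,t)$ of the line $\{x=0\}$ gives $i(f,x)=\mathrm{ord}\, f(0,t)=\mathrm{ord}\, f(0,y)=n$, the last equality because $f$ is $y$-general of order $n$. On the other hand, substituting the parametrization $(x(t),y(t))$ of $f$ into the coordinate function $x$ returns $x(t)$, so $i(x,f)=\mathrm{ord}\, x(t)$. Since $i(f,x)=i(x,f)$ by Proposition \ref{pro0}, we get $\mathrm{ord}\, x(t)=n$; the symmetric computation with $\{y=0\}$, using that $f$ is $x$-general of order $m$, gives $\mathrm{ord}\, y(t)=m$.

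For (b) and (c) I would first fix the combinatorial picture. Being $x$-general of order $m$ and $y$-general of order $n$, the series $f$ is convenient, and $\Gamma(f)$ runs from $(m,0)$ to $(0,n)$. Write $q=\gcd(m,n)$, $m'=m/q$, $n'=n/q$. The segment $S$ joining $(m,0)$ and $(0,n)$ lies on the line $n'i+m'j=m'n'q$, and its lattice points are exactly $(m'(q-k),n'k)$ for $k=0,\dots,q$. Thus claim (b) is the assertion $\Gamma(f)=S$, and once this is known the initial part is the binary form $f_{in}=\sum_{k=0}^{q}c_k\,(x^{m'})^{q-k}(y^{n'})^{k}$, so that (c) reduces to showing this degree-$q$ form in $(x^{m'},y^{n'})$ is the $q$-th power of a single linear factor.

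The crux is a factorization argument. I would invoke the classical structure theory of the Newton diagram: a nontrivial coprime factorization of the relevant initial polynomial lifts, by Hensel's lemma in the complete local ring, to a nontrivial factorization of $f$. Two features could produce such a coprime splitting. First, if $\Gamma(f)$ had two edges of distinct slopes, these correspond to factors of $f$ with disjoint Newton data, which are coprime. Second, if the edge form $\prod_{l}(\alpha_l x^{m'}-\beta_l y^{n'})$ had two distinct projective roots, the factors $x^{m'}-\lambda_1 y^{n'}$ and $x^{m'}-\lambda_2 y^{n'}$ (with $\lambda_1\neq\lambda_2$) are coprime. Since $f$ is irreducible, neither can occur: hence $\Gamma(f)=S$, which is (b), and the edge form has a single projective root, so $f_{in}=\xi(x^{m'}-\lambda y^{n'})^{q}=\xi(x^{m/q}-\lambda y^{n/q})^{q}$. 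Finally $\lambda,\xi\in K^{*}$, because $(m,0),(0,n)\in\mathrm{supp}(f)$ force the coefficients of $x^{m}$ and $y^{n}$ in $f_{in}$ to be nonzero, excluding $\lambda=0$ or $\lambda=\infty$.

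The step I expect to be the main obstacle is exactly this Hensel lifting in positive characteristic. One must verify that the pieces produced by distinct edges, respectively by distinct roots of the edge form, are genuinely coprime; it is this coprimality—rather than any separability hypothesis—that makes the lifting, and hence the entire reduction, work uniformly in all characteristics. In particular, the fact that a pure power $\xi(x^{m/q}-\lambda y^{n/q})^{q}$ does \emph{not} split as a coprime product is precisely what allows an irreducible $f$ to have such an initial form, so no contradiction arises in the remaining case.
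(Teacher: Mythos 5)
Your argument is correct, but it takes a genuinely different route from the paper, which offers no proof at all: it simply refers to Campillo \cite[Lemmas 3.4.3, 3.4.4, 3.4.5]{Cam80}, where these statements are established inside the general theory of algebroid curves via Hamburger--Noether expansions (the characteristic-free substitute for Puiseux series). Your proof of (a) from the symmetry $i(f,x)=i(x,f)$ of Proposition \ref{pro0} is exactly right and uses only material already available in the paper. For (b) and (c), note that the edge-wise half of your Hensel argument is literally the paper's Proposition \ref{pro4} (quoted from \cite{BrK86}): irreducibility together with that factorization already forces $\Gamma(f)$ to be the single segment from $(m,0)$ to $(0,n)$, so the only genuinely new ingredient you need is the finer, within-edge splitting: if the edge form has two distinct projective roots, then $f$ factors. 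This is precisely where your flagged obstacle sits, and your diagnosis is accurate: after Weierstrass preparation this is Ore's classical theorem on Newton polygons over a complete discrete valuation ring (coprime factorizations of the residual polynomial lift to factorizations of the polynomial), which needs coprimality but no separability and so holds in every characteristic; the coprimality of $x^{m'}-\lambda_1 y^{n'}$ and $x^{m'}-\lambda_2 y^{n'}$ for $\lambda_1\neq\lambda_2$ is immediate, since a common factor would divide $(\lambda_2-\lambda_1)y^{n'}$ while neither factor is divisible by $y$. Your closing remark is also the right sanity check: when the residual form is a pure $q$-th power (possibly with $p\mid q$) no coprime splitting exists, which is consistent with an irreducible $f$ having initial form $\xi(x^{m/q}-\lambda y^{n/q})^q$. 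What each approach buys: yours is essentially self-contained modulo one classical valuation-theoretic lemma and recycles the paper's own tools (Propositions \ref{pro0} and \ref{pro4}), working uniformly in all characteristics; the paper's citation outsources everything to Campillo's systematic development, which in exchange supplies the parametrization machinery (existence and primitivity of $(x(t),y(t))$) that your argument, like the paper's statement, tacitly presupposes.
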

\begin{proof}
cf. \cite[Lemma 3.4.3, 3.4.4, 3.4.5]{Cam80}.
\end{proof}

\begin{proposition}\label{pro4}\cite[Lemma 3]{BrK86}
Let $f\in K[[x,y]]$ and let $E_i, i=1,\ldots, k$ be the edges of its Newton diagram. Then there is a factorization of $f$: 
$$f=monomial\cdot \bar f_1\cdot\ldots\cdot \bar f_k$$
such that $\bar f_i$ is convenient, $f_{E_i}=monomial\times (\bar f_i)_{in}$. In particular, if $f$ is convenient then $f=\bar f_1\cdot\ldots\cdot \bar f_k$.
\end{proposition}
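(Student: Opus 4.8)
The plan is to reduce to the convenient case and then peel the edges off one at a time by a weighted Hensel argument; I would proceed as follows.

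\emph{Reduction to convenient $f$.} First I would set $\alpha=\max\{k:x^k\mid f\}$, $\beta=\max\{k:y^k\mid f\}$ and write $f=x^\alpha y^\beta g$. Then $g(0,y)\neq0$ and $g(x,0)\neq0$, so $g$ is convenient, while $\Gamma_+(f)=(\alpha,\beta)+\Gamma_+(g)$, so the edges of $\Gamma(f)$ are the translates of those of $\Gamma(g)$ and $f_{E_i}=x^\alpha y^\beta\cdot g_{E_i'}$. Hence it suffices to treat $g$, the monomial $x^\alpha y^\beta$ being absorbed into the leading monomial; when $f$ is itself convenient one has $\alpha=\beta=0$ and no monomial is needed. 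So assume from now on that $f$ is convenient with edges $E_1,\dots,E_k$ of slopes $s_1,\dots,s_k$, ordered so that $E_1$ is adjacent to the $y$-axis, and induct on $k$, the case $k=1$ being $\bar f_1=f$.

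\emph{Inductive step.} Let $(p_1,q_1)$ be the primitive inner normal of $E_1$, with $p_1,q_1>0$ since $E_1$ is compact, and write $w=(p_1,q_1)$, $\deg_w(x^iy^j)=p_1i+q_1j$. Since $E_1$ is the $w$-minimal face of $\Gamma_+(f)$ we have $\mathrm{in}_w(f)=f_{E_1}$; its lattice points are $(sq_1,e_0-sp_1)$, $0\le s\le L$, where $(0,e_0)$ and $(Lq_1,d_1)$ with $d_1=e_0-Lp_1$ are the endpoints of $E_1$, so
$$f_{E_1}=y^{d_1}\Phi(x^{q_1},y^{p_1}),\qquad \Phi(X,Y)=\sum_{s=0}^{L}c_sX^sY^{L-s},$$
with $c_0,c_L\neq0$. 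Over the algebraically closed field $K$ the binary form $\Phi$ splits into linear factors, none equal to $X$ or $Y$, giving $f_{E_1}=c\,y^{d_1}\psi$ with $\psi=\prod_l(\mu_lx^{q_1}-\nu_ly^{p_1})^{m_l}$, all $\mu_l,\nu_l\neq0$; in particular $\psi$ is coprime to $y$. As $K$ is a domain, $\mathrm{in}_w$ is multiplicative, and the factorization $\mathrm{in}_w(f)=\psi\cdot(c\,y^{d_1})$ into coprime $w$-homogeneous factors lifts, by the weighted Hensel lemma below, to $f=\bar f_1\cdot R$ in $K[[x,y]]$ with $\mathrm{in}_w(\bar f_1)=\psi$ and $\mathrm{in}_w(R)=c\,y^{d_1}$.

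\emph{Bookkeeping and recursion.} Since $\mathrm{in}_w(\bar f_1)=\psi$ is supported on a single segment of slope $s_1$ and contains both a pure power of $x$ and a pure power of $y$, $\bar f_1$ is convenient with Newton diagram a single edge and $(\bar f_1)_{in}=\psi$. By multiplicativity of Newton polyhedra $\Gamma_+(f)=\Gamma_+(\bar f_1)+\Gamma_+(R)$, and because $\mathrm{in}_w(R)=c\,y^{d_1}$ is a pure power of $y$ the diagram of $R$ again meets the $y$-axis, so $R$ is convenient and carries exactly the slopes $s_2,\dots,s_k$. The inductive hypothesis for $R$ then yields $f=\bar f_1\bar f_2\cdots\bar f_k$ with every $\bar f_i$ convenient and $\Gamma(\bar f_i)$ a single edge of slope $s_i$. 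Applying $\mathrm{in}_{w_i}$ (the normal to $E_i$) to this product gives $f_{E_i}=\prod_j\mathrm{in}_{w_i}(\bar f_j)=(\bar f_i)_{in}\cdot m_i$, where each factor with $j\neq i$ contributes only its $w_i$-minimal vertex, so $m_i$ is a monomial; this is the asserted identity $f_{E_i}=\text{monomial}\times(\bar f_i)_{in}$.

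\emph{The main obstacle.} The real content is the weighted Hensel lemma in arbitrary characteristic: if $\mathrm{in}_w(f)=\bar g\bar h$ with $\bar g,\bar h\in K[x,y]$ $w$-homogeneous and coprime, then $f=gh$ with $\mathrm{in}_w(g)=\bar g$ and $\mathrm{in}_w(h)=\bar h$. I would prove it by successive approximation in the $w$-adic filtration, with respect to which $K[[x,y]]$ is complete since $p_1,q_1\ge1$: lifting $\bar g,\bar h$ to $g_0,h_0$ and correcting order by order, the step at $w$-degree $d$ amounts to solving $u\bar h+v\bar g=c$ for the homogeneous error $c$ of degree $d\ge\deg_w\bar g+\deg_w\bar h+1$. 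This is solvable because coprime elements of the two-dimensional ring $K[x,y]$ form a regular sequence, so $K[x,y]/(\bar g,\bar h)$ is a complete intersection with Hilbert series $\tfrac{(1-t^{\deg_w\bar g})(1-t^{\deg_w\bar h})}{(1-t^{p_1})(1-t^{q_1})}$, a polynomial vanishing in degrees $>\deg_w\bar g+\deg_w\bar h-p_1-q_1$; thus $c\in(\bar g,\bar h)_d$. Completeness then gives convergence. This degree bound, valid in every characteristic, is exactly what replaces the Newton–Puiseux expansion that fails in positive characteristic, and checking it (together with the multiplicativity facts used above) is the only genuinely delicate point.
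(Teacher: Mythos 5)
Your proof is correct, and it is worth noting that the paper itself offers no argument for this proposition at all: it is quoted from Brieskorn--Kn\"orrer \cite[Lemma 3]{BrK86}, where the statement is proved for convergent series over $\Bbb C$. Your route -- peeling off one edge at a time by lifting the coprime factorization $\mathrm{in}_w(f)=\psi\cdot(c\,y^{d_1})$ through a weighted Hensel lemma -- is essentially the classical proof, but your write-up of the Hensel step is genuinely stronger than the citation: the Koszul/Hilbert-series bound, that the graded complete intersection $K[x,y]/(\bar g,\bar h)$ vanishes in $w$-degrees above $\deg_w\bar g+\deg_w\bar h-p_1-q_1$, is characteristic-free, whereas \cite{BrK86} works over $\Bbb C$. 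Since the paper applies the proposition over fields of positive characteristic (it feeds into Proposition \ref{pro13} and Section 3), your argument in effect supplies the justification that the bare citation leaves implicit. The delicate points all check out: coprimality of $\psi$ and $c\,y^{d_1}$ holds because both endpoints of $E_1$ are vertices of $\Gamma(f)$, so the binary form $\Phi$ has no factor $X$ or $Y$; coprime elements of the UFD $K[x,y]$ do form a regular sequence, the quotient is Artinian, and homogeneity lets you solve $u\bar h+v\bar g=c$ degreewise, with convergence from completeness since $p_1,q_1\geq 1$. Two steps you state tersely but which are fine: $R$ is convenient also on the $x$-axis side (either from $\Gamma_+(f)=\Gamma_+(\bar f_1)+\Gamma_+(R)$ and the fact that a point of the $x$-axis can only be a Minkowski sum of points on the $x$-axis, or simply because $f(x,0)\neq 0$ and $K[[x]]$ is a domain); and in the final bookkeeping $\mathrm{in}_{w_i}(\bar f_j)$ for $j\neq i$ is a single monomial because the $w_i$-minimal face of $\Gamma_+(\bar f_j)$ is a vertex and vertices of a Newton polyhedron lie in the support. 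So the proposal is a complete and, for the paper's purposes, arguably more self-contained proof than the one the paper relies on.
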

A polynomial $f=\sum_{i,j}c_{ij}x^{i}y^{j}\in K[x,y]$ is called {\it weighted homogeneous} or {\it quasihomogeneous} of type $(n,m;d)$ if $m,n,d$ are positive integers satisfying $n i+mj=d$, for each $(i,j)\in\text{supp}(f)$.

Let $f\in K[[x,y]]$ be a formal power series and $n,m$ positive integers. We can decompose $f$ into a sum
$$f=f^{\mathrm{w}}_d+f^{\mathrm{w}}_{d+1}+\ldots,$$
where $f^{\mathrm{w}}_d\neq 0$ and $f^{\mathrm{w}}_l$ is weighted homogeneous of type $(n,m;l)$ for $l\geq d$. We call $f^{\mathrm{w}}_d$ the first term of the decomposition.

For each series $\varphi(t)=c_1t^{\alpha_1}+c_2 t^{\alpha_2}+\ldots$ with $c_1\neq 0, \alpha_1<\alpha_2<\ldots$, we set
$$LT(\varphi(t)):=c_1t^{\alpha_1}\text{ and } LC(\varphi(t)):=c_1.$$
\begin{lemma}\label{lm12} 
Let $m,n$ be two positive integers. Let $x(t),y(t)\in K[[t]]$ with $\mathrm{LT} (x(t))=a t^{\alpha}$ and $\mathrm{LT} (y(t))=b t^{\beta}$ such that $\alpha:\beta=n:m$. Let $f=f^{\mathrm{w}}_d+f^{\mathrm{w}}_{d+1}+\ldots$, be a $(n,m)$-weighted homogeneous decomposition of $f\in K[[x,y]]$. Then $\mathrm{ord}f(x(t),y(t))\geq \frac{d\alpha}{n}$. Equality holds if and only if $f_d(a,b)\neq 0$.
\end{lemma}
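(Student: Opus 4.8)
The plan is to substitute $t\mapsto f(x(t),y(t))$ and track only the lowest power of $t$, working along the weighted-homogeneous decomposition term by term. Write $s:=\alpha/n=\beta/m$, so that $\alpha=ns$ and $\beta=ms$; note $s>0$, since the ratio $\alpha:\beta=n:m$ with $n,m>0$ forces $\alpha,\beta>0$. I would first examine each weighted-homogeneous component $f^{\mathrm{w}}_l$ in isolation.

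For a monomial $x^iy^j$ occurring in $f^{\mathrm{w}}_l$ we have $ni+mj=l$, and therefore
$$\mathrm{ord}\bigl(x(t)^iy(t)^j\bigr)=i\alpha+j\beta=s(ni+mj)=sl=\frac{\alpha l}{n},$$
with leading coefficient $a^ib^j$. Summing over all monomials of $f^{\mathrm{w}}_l$ then gives
$$f^{\mathrm{w}}_l(x(t),y(t))=f^{\mathrm{w}}_l(a,b)\,t^{\alpha l/n}+(\text{higher order in }t),$$
so that $\mathrm{ord}\,f^{\mathrm{w}}_l(x(t),y(t))\ge \alpha l/n$, with equality exactly when $f^{\mathrm{w}}_l(a,b)\neq 0$.

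The decisive observation is that the exponents $\alpha l/n$ are strictly increasing in $l$, because $s>0$. Hence in $f(x(t),y(t))=\sum_{l\ge d}f^{\mathrm{w}}_l(x(t),y(t))$ only the first term $f^{\mathrm{w}}_d$ can contribute to the coefficient of the lowest power $t^{\alpha d/n}$; every summand with $l>d$, as well as every tail contribution coming from the non-leading terms of $x(t)$ and $y(t)$, lives in strictly higher order. I would conclude that the coefficient of $t^{\alpha d/n}$ in $f(x(t),y(t))$ is precisely $f^{\mathrm{w}}_d(a,b)$, which yields both the inequality $\mathrm{ord}\,f(x(t),y(t))\ge \alpha d/n$ and the stated equality criterion at once.

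The computation is essentially routine; the only point demanding care is the equality direction, where one must rule out that the $t^{\alpha d/n}$-term of $f^{\mathrm{w}}_d$ gets cancelled by contributions of the higher pieces. This cancellation cannot occur precisely because $\alpha>0$ strictly orders the exponents $\alpha l/n$, so I expect keeping this single hypothesis in view to be the main thing to watch, rather than any real obstacle.
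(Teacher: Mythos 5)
Your proof is correct and takes essentially the same route as the paper: the paper writes $x(t)=t^{\alpha}(a+u(t))$, $y(t)=t^{\beta}(b+v(t))$ and uses weighted homogeneity to get $f^{\mathrm{w}}_l(x(t),y(t))=t^{l\alpha/n}f^{\mathrm{w}}_l(a+u(t),b+v(t))$, which is just a repackaging of your monomial-by-monomial identification of the coefficient of $t^{l\alpha/n}$ as $f^{\mathrm{w}}_l(a,b)$. Both arguments then conclude exactly as you do, from the strict increase of the exponents $l\alpha/n$ in $l$ (using $\alpha>0$).
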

\begin{proof}
We can write $x(t)=t^{\alpha}(a+u(t))$ and $y(t)=t^{\beta}(b+v(t))$, where $\mathrm{ord} u(t)>0$ and $\mathrm{ord} v(t)>0$. Then 
\begin{eqnarray*}
f^{\mathrm{w}}_l(x(t),y(t))&=&\sum_{ni+mj=l}c_{ij}(t^{\alpha}(a+u(t)))^i(t^{\beta}(b+v(t)))^j\\
&=& t^\frac{l\alpha}{n}f^{\mathrm{w}}_l(a+u(t),b+v(t)).
\end{eqnarray*}
Thus $\mathrm{ord}f^{\mathrm{w}}_l(x(t),y(t))\geq \frac{l\alpha}{n}$ and hence $\mathrm{ord}f(x(t),y(t))\geq \frac{d\alpha}{n}$.

Since $f^{\mathrm{w}}_d(a+u(t),b+v(t))=f^{\mathrm{w}}_d(a,b)+t h(t)$ for some power series $h$, 
$$\mathrm{ord}f(x(t),y(t))= \mathrm{ord} f^{\mathrm{w}}_d(x(t),y(t))=\frac{d\alpha}{n}$$
iff $f^{\mathrm{w}}_d(a,b)\neq 0$.
\end{proof}
\begin{lemma}\label{lm13} 
Let $f\in K[[x,y]]$ be convenient such that $\Gamma(f)$ has only one edge. Let $m=\mathrm{ord} f(x,0), n=\mathrm{ord} f(0,y)$ and $f= f_1\cdot\ldots\cdot  f_r$ a factorization of $f$ into its branches (irreducible factors).
\begin{itemize}
\item[(a)] Let $(x_j(t),y_j(t))$ be a parametrization of $ f_j, j=1,\ldots,r$ with $\mathrm{LT}(x_j(t))=a_j t^{\alpha_j}$ and $\mathrm{LT}(y_j(t))=b_j t^{\beta_j}$. Then $f_{\mathrm {in}}(a_j,b_j)= 0$, $\alpha_j:\beta_j=n:m$ and $\alpha_1+\cdots+\alpha_r=n$.
\item[(b)] Let $a, b\in K^*$ such that $f_{\mathrm {in}}(a,b)= 0$. Then there is a parametrization $(x(t),y(t))$ of a branch of $f$ satisfying $\mathrm{LC}(x(t))=a$ and $\mathrm{LC}(y(t))=b$. 
\end{itemize}
\end{lemma}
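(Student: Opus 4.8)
The plan is to use that, since $f$ is convenient with a single edge joining $(0,n)$ and $(m,0)$, the edge lies on the line $ni+mj=mn$, so the initial part $f_{\mathrm{in}}$ is exactly the first term of the $(n,m)$-weighted decomposition of $f$ and is weighted homogeneous of type $(n,m;mn)$. I first observe that each branch $f_j$ is again convenient (it is irreducible and, as $f$ is convenient, is divisible by neither $x$ nor $y$), so Proposition \ref{pro1} applies to it: writing $m_j=\mathrm{ord}\, f_j(x,0)$ and $n_j=\mathrm{ord}\, f_j(0,y)$, part (a) gives $\alpha_j=\mathrm{ord}(x_j(t))=n_j$ and $\beta_j=\mathrm{ord}(y_j(t))=m_j$, while part (b) says $\Gamma(f_j)$ is the segment from $(0,n_j)$ to $(m_j,0)$.

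The heart of (a) is the relation $\alpha_j:\beta_j=n:m$, i.e. $n_j:m_j=n:m$. I would argue by contradiction: if $n_j:m_j\neq n:m$, then the linear functional $(i,j)\mapsto n_ji+m_jj$ is not parallel to the edge of $f$, so it attains its minimum over $\Gamma_+(f)$ at a single vertex, namely one of the endpoints $(0,n)$ or $(m,0)$. Consequently the first term of the $(n_j,m_j)$-weighted decomposition of $f$ is a single monomial $c_{0n}y^n$ or $c_{m0}x^m$, which is nonzero because $f$ is convenient. Applying Lemma \ref{lm12} to $f$ with weights $(n_j,m_j)$ and the parametrization $(x_j(t),y_j(t))$ (whose orders satisfy the required ratio $n_j:m_j$), the non-vanishing of this monomial at $(a_j,b_j)\in(K^*)^2$ forces $\mathrm{ord}\, f(x_j(t),y_j(t))$ to be \emph{finite}, contradicting $f_j\mid f$ and hence $f(x_j(t),y_j(t))=0$. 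Therefore $n_j:m_j=n:m$.

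With the slope now fixed, the remaining two claims of (a) follow quickly. Applying Lemma \ref{lm12} this time with the correct weights $(n,m)$, for which the first term is $f_{\mathrm{in}}$ and $d=mn$, gives $\mathrm{ord}\, f(x_j(t),y_j(t))\geq m\alpha_j$ with equality if and only if $f_{\mathrm{in}}(a_j,b_j)\neq 0$; since the actual order is infinite, equality fails and hence $f_{\mathrm{in}}(a_j,b_j)=0$. Finally, additivity of orders under products yields $\sum_j\alpha_j=\sum_j n_j=\mathrm{ord}\bigl(\prod_j f_j(0,y)\bigr)=\mathrm{ord}\, f(0,y)=n$, proving the last relation.

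For part (b) I would use multiplicativity of the $(n,m)$-weighted leading form (valid since the associated graded ring is a domain) to factor $f_{\mathrm{in}}=\prod_j (f_j)_{\mathrm{in}}$; writing $q=(m,n)$, $m'=m/q$, $n'=n/q$ and $q_j=(m_j,n_j)$, the equal-slope relation together with Proposition \ref{pro1}(c) gives $(f_j)_{\mathrm{in}}=\xi_j\,(x^{m'}-\lambda_j y^{n'})^{q_j}$ with $\xi_j,\lambda_j\in K^*$. Given $a,b\in K^*$ with $f_{\mathrm{in}}(a,b)=0$, some factor vanishes, so there is a branch $f_j$ with $a^{m'}=\lambda_j b^{n'}$, whose parametrization from (a) has leading coefficients $a_j,b_j$ satisfying $a_j^{m'}=\lambda_j b_j^{n'}$. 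Setting $A=a/a_j$ and $B=b/b_j$ one checks $A^{m'}=B^{n'}$; choosing integers $p,s$ with $pn'+sm'=1$, the element $d=A^pB^s$ satisfies $d^{n'}=A$ and $d^{m'}=B$, and any $q_j$-th root $c$ of $d$ (which exists as $K$ is algebraically closed) gives $c^{n_j}=A$ and $c^{m_j}=B$. Then reparametrizing by $t\mapsto ct$ turns $(x_j(t),y_j(t))$ into a parametrization of the same branch with leading coefficients $a_jc^{n_j}=a$ and $b_jc^{m_j}=b$, as desired. I expect the main obstacle to be exactly the slope-equality step: the mediant relation $n/m=\sum n_j/\sum m_j$ does not by itself force $n_j/m_j=n/m$, and the monomial-first-term trick above (rather than a direct computation) is what makes it work; a secondary point requiring care in positive characteristic is the extraction of the $q_j$-th root, which is nonetheless guaranteed by algebraic closedness.
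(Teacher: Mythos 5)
Your proof is correct, but at the two crucial points it takes a genuinely different route from the paper. For the slope relation $\alpha_j:\beta_j=n:m$ in (a), the paper argues directly rather than by contradiction: it factors $f_{\mathrm{in}}=\prod_j (f_j)_{\mathrm{in}}$, observes that each factor of the $(n,m)$-weighted homogeneous polynomial $f_{\mathrm{in}}$ is itself $(n,m)$-weighted homogeneous of some degree $d_j$, and, since Proposition \ref{pro1} puts both $x^{\beta_j}$ and $y^{\alpha_j}$ in the support of $(f_j)_{\mathrm{in}}$, reads off $n\beta_j=d_j=m\alpha_j$ at once. Your alternative --- for mismatched weights $(n_j,m_j)$ the first term of the weighted decomposition of $f$ is a single monomial at a vertex, which cannot vanish on $(K^*)^2$, so Lemma \ref{lm12} would force $\mathrm{ord}\, f(x_j(t),y_j(t))<\infty$ --- is sound: the minimum of a positive linear functional not parallel to the unique edge is attained at exactly one vertex, and convenience guarantees that vertex lies in $\mathrm{supp}(f)$; it trades multiplicativity of initial parts for a convexity observation plus the equality criterion of Lemma \ref{lm12}. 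The remainder of (a) ($f_{\mathrm{in}}(a_j,b_j)=0$ via Lemma \ref{lm12}, and $\sum_j\alpha_j=n$ by additivity, which the paper phrases as $n=i(f,x)=\sum_j\mathrm{ord}\,x_j(t)$) agrees with the paper. In (b) the divergence is more substantial: the paper first reduces to $f$ irreducible, assumes WLOG $p\nmid n'$ (possible since $\gcd(m',n')=1$), shows that the $n'$ elements $\bar b\xi_i^{m'}$, with $\xi_i$ the distinct $n'$-th roots of $a/\bar a$, exhaust the roots of $g(y)=a^{m'}-\lambda y^{n'}$, identifies $b$ with one of them, and reparametrizes by a $q$-th root of $\xi_{i_0}$. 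You instead locate the vanishing branch factor $(f_j)_{\mathrm{in}}=\xi_j(x^{m'}-\lambda_j y^{n'})^{q_j}$ and solve $c^{n_j}=a/a_j$, $c^{m_j}=b/b_j$ directly via a B\'ezout relation $pn'+sm'=1$ followed by extraction of a $q_j$-th root, which exists because $K$ is algebraically closed (hence perfect) even when the characteristic divides $q_j$. Your construction is uniform in the characteristic and dispenses with both the WLOG and the distinct-root count --- precisely where the paper spends its positive-characteristic care --- while the paper's version yields as a by-product an explicit description of all torus zeros of $f_{\mathrm{in}}$ in the irreducible case, not merely the existence of a matching branch; both arguments conclude with the same reparametrization $t\mapsto ct$, which preserves primitivity of the parametrization.
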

\begin{proof} Let $f=f^{\mathrm{w}}_{d}+f^{\mathrm{w}}_{{d}+1}+\ldots$ with $f^{\mathrm{w}}_{d}\neq 0$ be the $(n,m)$-weighted homogeneous decomposition of $f$. Then $f^{\mathrm{w}}_d=f_{\mathrm{in}}$. 

(a) It is easily verified that $f_{\mathrm {in}}=\prod( f_j)_{\mathrm {in}}$ then $( f_j)_{in}$ is also a $(n,m)$-weighted homogeneous polynomial of order some $d_j$. By Proposition \ref{pro1}, $\mathrm{ord}  f_j(x,0)=\beta_j$ and $\mathrm{ord}  f_j(0,y)=\alpha_j$, i.e. $x^{\beta_j}$ and $y^{\alpha_j}$ are monomials of $( f_j)_{\mathrm {in}}$. Thus $n\beta_j=d_j=m\alpha_j$ and hence $\alpha_j:\beta_j=n:m$.

Since $f(x_j(t),y_j(t))=0$, i.e. $\mathrm{ord} f(x_j(t),y_j(t))=+\infty>\frac{d\alpha_j}{n}$, Lemma \ref{lm12} yields that $f^{\mathrm{w}}_{d}(a_j,b_j)= 0$, i.e. $f_{\mathrm {in}}(a_j,b_j)= 0$. 

Now, by the definition of intersection multiplicity we have
$$n=i(f,x)=\sum_{j=1}^r \mathrm{ord} x_j(t)=\sum_{j=1}^r \alpha_j.$$

(b) Since $f=g\cdot h$ implies $f_{\mathrm {in}}=g_{\mathrm {in}}\cdot h_{\mathrm {in}}$, it suffices to prove part (b) for the irreducible case. Then by Proposition \ref{pro1}(c), there exist $\xi, \lambda\in K^*$ such that 
$$f_{in}(x,y)=\rho \cdot (x^{m'}-\lambda y^{n'})^q,$$
where $q=(m,n); m'=m/q$ and $n'=n/q$. Clearly, $(m',n')=1$. Then it is impossible for the characteristic $p$ of $K$ divides both $m'$ and $n'$. We may assume that $p$ does not divide $n'$. 
 
Let $(\bar x(t),\bar y(t))$ be a parametrization of $f$. It follows from Proposition \ref{pro1} that $\mathrm{LT}(\bar x(t))=\bar a t^n$ and $\mathrm{LT}(\bar y(t))=\bar b t^m$ for some $\bar a, \bar b\in K^*$ satisfying $f_{\mathrm {in}}(\bar a,\bar b)=0$. Set 
$$g(y):= a^{m'}-\lambda y^{n'}\text{ and }\sqrt[n']{a/\bar a}:=\{\xi_i| i=1,\ldots,n'\}.$$
Then 
$$g(\bar b \xi_i^{m'})=f_{\mathrm {in}}(a,\bar b \xi_i^{m'})=f_{\mathrm {in}}(\bar a\xi_i^n,\bar b \xi_i^{m'})=\xi_i^d f_{\mathrm {in}}(\bar a,\bar b)=0.$$
Since $(m',n')=1$, it is easy to see that $\xi_i^{m'}\neq \xi_j^{m'}$ for all $i\neq j$. Thus the set 
$$\{\bar b \xi_i^{m'}| i=1,\ldots,n'\}$$ 
contains all of roots of $g$. Since $0=f_{in}(a,b)=\rho \cdot g(b)^q$, $g(b)=0$. Then there is an index $i_0$ such that $b=\bar b\xi_{i_0}^{m'}$. Choose a $\epsilon$ in $\sqrt[q]{\xi_{i_0}}$ and put
$$x(t)=\bar x(\epsilon t)\text{ and } y(t)=\bar y(\epsilon t),$$
we get $\mathrm{LC}(x(t))=a$ and $\mathrm{LC}(y(t))=b$.
\end{proof}
\begin{definition}\label{def11}{\rm
Let $f=\sum c_{ij}x^iy^j \in K[[x, y]]$ be such that $(0,n)$ is the vertex on the $y$-axis of $\Gamma(f)$. Let $(1,j_1)$ be the intersection point of $\Gamma(f)$ and the line $x=1$. We define $f$ to be {\em ND1 along} $(0,n)$ if either $char(K)=p=0$ or if $p\neq 0$ then $p\not |n$ or $j_1\in\Bbb N$ and the coefficient $c_{1j_1}$ of $xy^{j_1}$ in $f$ is different from zero. ND1 along $(m,0)$, with $(m,0)$ the vertex on the $x$-axis of $\Gamma(f)$, is defined analogously.

$f$ is called NND1 if $f$ is convenient, ND along each inner face and ND1 along each vertex on the axes of $\Gamma(f)$.
}
\end{definition}
\begin{proposition}\label{pro11}
Let $f=\sum c_{ij}x^iy^j \in K[[x, y]]$ be convenient and let $(0,n)$ (resp. $(m,0)$) be the vertex on the $y$-axis (resp. on the $x$-axis) of $\Gamma(f)$. Assume that $f$ is not ND1 along the point $(0,n)$ or $(m,0)$ then $\mu(f)>\mu_N(f)$.
\end{proposition}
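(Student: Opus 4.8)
The plan is to express the Milnor number as the intersection multiplicity of the two partials and to read the hypothesis $p\mid n$ off the Newton polygons of these partials in the region adjacent to the $y$-axis.

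Since $f$ is convenient we have $\mu(f)=\dim K[[x,y]]/\langle f_x,f_y\rangle$, which by Proposition \ref{pro0} equals $i(f_x,f_y)$ whenever $f_x,f_y$ share no factor; if they do share a factor then $\mu(f)=\infty>\mu_N(f)$ and we are done, so assume $\mu(f)<\infty$. I would then invoke the characteristic-free Newton-polygon lower bound for intersection multiplicities, $i(f_x,f_y)\ge i_N(f_x,f_y)$, where $i_N$ is computed from $\Gamma(f_x)$ and $\Gamma(f_y)$. The reference point is the non-degenerate situation: when $f$ is non-degenerate (in particular ND1 along the axis vertices), the equality $\mu(f)=\mu_N(f)$ of Kouchnirenko (Theorem \ref{thm0}) is precisely the identity $i_N(f_x,f_y)=\mu_N(f)$, with $\Gamma(f_y)$ and $\Gamma(f_x)$ carrying, near the $y$-axis, the vertices produced by $\partial_y$ and $\partial_x$ of the monomials of $f$ lying on the edge $E_1$ through $(0,n)$.

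The entire effect of the hypotheses is local to $E_1$. First, $p\mid n$ forces $\partial_y(c_{0n}y^n)=n\,c_{0n}y^{n-1}=0$, so the monomial $y^{n-1}$ is absent from $f_y$. The failure of ND1 says exactly that the one monomial of $f$ on the line $x=1$ which could repair this is missing: either $j_1\notin\mathbb{N}$, so $E_1$ carries no lattice point over $x=1$, or $j_1\in\mathbb{N}$ but $c_{1j_1}=0$. In either case the vertex $(1,j_1-1)$ disappears from $\Gamma(f_y)$, and the vertex $(0,j_1)$ disappears from $\Gamma(f_x)$ (because $\partial_x(c_{1j_1}xy^{j_1})=c_{1j_1}y^{j_1}$ vanishes too). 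Thus, relative to the non-degenerate reference, both $\Gamma_-(f_x)$ and $\Gamma_-(f_y)$ are strictly shrunk in the sector adjacent to the $y$-axis, while they are unchanged away from it.

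Because the Newton intersection bound is monotone — shrinking $\Gamma_-(u)$ can only increase $i_N(u,v)$ — this strict shrinking gives $i(f_x,f_y)\ge i_N(f_x,f_y)>\mu_N(f)$, which is the asserted inequality; the case of a failure at $(m,0)$ is symmetric. The step I expect to be the real obstacle is this last one made quantitative: one must compute the Newton intersection number of the two \emph{derivative} polygons in characteristic $p$ and verify that deleting the vertices forced by $p\mid n$ together with the failure of ND1 raises it \emph{strictly} above $\mu_N(f)$, and in particular that this local gain is not cancelled by the simultaneous change of $\Gamma(f_x)$. Two further points need care and should be absorbed into the same combinatorial bookkeeping: establishing the baseline identity $i_N(f_x,f_y)=\mu_N(f)$ in positive characteristic, and the low-characteristic anomalies (most notably $p=2$, where $\partial_x$ may annihilate still more monomials of $f_x$), which could in principle move further vertices of $\Gamma(f_x)$.
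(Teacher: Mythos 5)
Your reduction $\mu(f)=i(f_x,f_y)$ and the dispatch of the case $\mu(f)=\infty$ are fine, but the core of your argument --- the baseline identity $i_N(f_x,f_y)=\mu_N(f)$, plus a strict increase of $i_N$ when monomials are deleted --- is exactly the part you defer, and it does not go through as stated. The baseline identity in positive characteristic is essentially Kouchnirenko's theorem under a non-degeneracy hypothesis, i.e.\ at least as hard as the proposition itself, so as written the plan is circular. Your polygon bookkeeping is also off: when ND1 fails because $j_1\notin\mathbb{N}$ or $c_{1j_1}=0$, there is \emph{no} monomial of $f$ at $(1,j_1)$ in the first place, so nothing ``disappears'' from $\Gamma(f_x)$ or $\Gamma(f_y)$, and there is no controlled non-degenerate reference object to compare against. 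The monotonicity direction is moreover reversed: deleting monomials shrinks $\Gamma_+$ and therefore \emph{enlarges} $\Gamma_-$ (the diagram moves away from the origin). And even granting all of that, raising a \emph{lower} bound for $i(f_x,f_y)$ only yields $\mu(f)>\mu_N(f)$ if you prove the raised bound strictly exceeds $\mu_N(f)$ --- the quantitative step you yourself flag as the obstacle and never supply.

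The paper's proof sidesteps the partials' polygons entirely, by perturbing $f$ itself. Since the failure of ND1 forces $p\mid n$, the monomial $c_{0n}y^n$ has both partials zero, so $g:=f-c_{0n}y^n$ satisfies $j(g)=j(f)$, hence $\mu(f)=\mu(g)$. The other half of the ND1 failure ($j_1\notin\mathbb{N}$ or $c_{1j_1}=0$) guarantees $(1,j_1)\in\Gamma_+(f)\setminus\Gamma_+(g)$, whence
$$\Gamma_-(f)\cap\mathbb{R}^2_{\geq 1}\subsetneqq\Gamma_-(g)\cap\mathbb{R}^2_{\geq 1},$$
and Lemma \ref{lm11}(b) gives the strict inequality $\mu_N(g)>\mu_N(f)$. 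Kouchnirenko's unconditional inequality $\mu\geq\mu_N$ (Theorem \ref{thm0}), applied to $g$, then closes the chain: $\mu(f)=\mu(g)\geq\mu_N(g)>\mu_N(f)$. Note how this uses only tools already established in the paper (Lemma \ref{lm11} and Theorem \ref{thm0}) and converts the two halves of the ND1 failure into exactly the two needed facts --- $\mu$ unchanged, $\mu_N$ strictly increased --- with no new intersection-theoretic machinery; if you want to salvage your route, you would in effect have to build that machinery (a characteristic-$p$ Newton bound for $i(f_x,f_y)$ with an exact non-degenerate baseline and strictness), which is a substantially larger undertaking.
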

\begin{proof}
We consider only $(0,n)$ since $(m,0)$ is analogous. Let $(1,j_1)$ be the intersection point of $\Gamma(f)$ and the line $x=1$. The assumption that $f$ is not ND1 along the point $(0,n)$ implies that $p|n$ and $c_{1j_1}=0$. Putting $g(x,y)=f(x,y)-c_{0n}y^n$ one then has $\mu(f)=\mu(g)$ and $\Gamma_-(f)\subset \Gamma_-(g)$. On the other hand, it is easy to see that $(1,j_1)\in \Gamma_+(f)\setminus \Gamma_+(g)$. This means $\Gamma_-(f)\cap \Bbb R_{\geq 1}^2\subsetneqq \Gamma_-(g)\cap \Bbb R_{\geq 1}^2$. It hence follows from Lemma \ref{lm11} that $\mu_N(g)>\mu_N(f)$. Thus
$$\mu(f)=\mu(g)\geq \mu_N(g)>\mu_N(f).$$
\end{proof}
\begin{proposition}\label{pro12}
Let $f \in K[[x, y]]$ be convenient. If $f$ is degenerate along some inner vertex of $\Gamma(f)$ then $\mu(f)>\mu_N(f)$.
\end{proposition}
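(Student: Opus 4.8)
The plan is to mimic the proof of Proposition \ref{pro11}, replacing the boundary vertex by the inner vertex and exploiting that the degeneracy forces \emph{both} partials of the vertex monomial to vanish, so that deleting it changes neither the Jacobian ideal nor convenience but strictly enlarges the Newton diagram on the $\Bbb R^2_{\geq 1}$-side.

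First I would unwind what degeneracy along an inner vertex means. Let $A_s=(c_s,e_s)$ be an inner vertex along which $f$ is degenerate. Since a vertex is a $0$-dimensional face, $f_{A_s}=c_{c_se_s}x^{c_s}y^{e_s}$ is a single monomial, and $\mathrm{j}(f_{A_s})$ is generated by $c_{c_se_s}c_s x^{c_s-1}y^{e_s}$ and $c_{c_se_s}e_s x^{c_s}y^{e_s-1}$. A common zero in $(K^*)^2$ exists if and only if $c_s\equiv e_s\equiv 0$ in $K$, i.e. $p:=\mathrm{char}(K)>0$ with $p\mid c_s$ and $p\mid e_s$. As $A_s$ is inner we have $c_s,e_s>0$, so in fact $c_s,e_s\geq p\geq 2$.

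Next I would set $g:=f-c_{c_se_s}x^{c_s}y^{e_s}$, deleting the vertex monomial. Because $p\mid c_s$ and $p\mid e_s$, both partial derivatives of the deleted monomial vanish identically, whence $\partial_x g=\partial_x f$ and $\partial_y g=\partial_y f$; thus $\mathrm{j}(f)=\mathrm{j}(g)$ and $\mu(f)=\mu(g)$. Deleting an inner vertex leaves the vertices of $\Gamma(f)$ on the axes untouched, so $g$ is still convenient and Lemma \ref{lm11} applies to the pair $(f,g)$. It then remains to compare the diagrams: since $A_s$ is a genuine convex vertex of $\Gamma(f)$ and is the unique support point of $f$ at that lattice point, $\Gamma(g)$ must run strictly on the $\Gamma_+$-side of $A_s$ near $A_s$ (along, or below, the chord $A_{s-1}A_{s+1}$, which lies strictly above-right of $A_s$). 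Hence $A_s+\varepsilon(1,1)\in\Gamma_-(g)\setminus\Gamma_-(f)$ for small $\varepsilon>0$, and as $c_s,e_s\geq 2$ this point lies in $\Bbb R^2_{\geq 1}$. Therefore $\Gamma_-(f)\cap\Bbb R^2_{\geq 1}\subsetneq\Gamma_-(g)\cap\Bbb R^2_{\geq 1}$, so Lemma \ref{lm11}(b) gives $\mu_N(g)>\mu_N(f)$, and combined with the universal bound $\mu_N\leq\mu$ (Theorem \ref{thm0}) one concludes $\mu(f)=\mu(g)\geq\mu_N(g)>\mu_N(f)$.

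The hard part will be the geometric step asserting a strict enlargement of $\Gamma_-$ \emph{inside} $\Bbb R^2_{\geq 1}$: one must argue rigorously that cutting the corner at $A_s$ genuinely adds area on the $\Bbb R^2_{\geq 1}$ side. This uses that $A_s$ is a true vertex, so it lies strictly below-left of the chord $A_{s-1}A_{s+1}$, together with the fact that no support point of $f$ can lie below the two edges of $\Gamma(f)$ meeting at $A_s$; everything else is a direct transcription of the argument for Proposition \ref{pro11}.
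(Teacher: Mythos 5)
Your proposal is correct and follows essentially the same route as the paper's proof: the same modification $g=f-c_{i_0j_0}x^{i_0}y^{j_0}$ with $p\mid i_0$, $p\mid j_0$ giving $\mathrm{j}(f)=\mathrm{j}(g)$, followed by the strict comparison of $\Gamma_-(f)\cap\Bbb R^2_{\geq 1}$ and $\Gamma_-(g)\cap\Bbb R^2_{\geq 1}$ via Lemma \ref{lm11}(b) and Kouchnirenko's inequality $\mu_N(g)\leq\mu(g)$. Your only addition is a careful justification of the step the paper dismisses with ``Clearly'' (that deleting the vertex monomial strictly enlarges $\Gamma_-$ inside $\Bbb R^2_{\geq 1}$, using $i_0,j_0\geq p\geq 2$), which is sound.
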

\begin{proof}
Assume that $f$ is degenerate along some vertex $(i_0,j_0)$ of $\Gamma(f)$ with $i_0>0$ and $j_0>0$. Then $p\neq 0$ and $i_0$ and $j_0$ are divisible by $p$. Put $g(x,y)=f(x,y)-c_{i_0j_0}x^{i_0}y^{j_0}$, then $j(f)=j(g)$ and hence $\mu(f)=\mu(g)$. Clearly, $\Gamma_+(g)$ does not contain the point $(i_0,j_0)$. Thus 
$$\Gamma_-(f)\cap \Bbb R^2_{\geq 1}\subsetneqq \Gamma_-(g)\cap \Bbb R^2_{\geq 1}.$$ Lemma \ref{lm11} hence implies that $\mu_N(g)>\mu_N(f)$. We then have
$$\mu(f)=\mu(g)\geq \mu_N(g)>\mu_N(f).$$
\end{proof}
\begin{proposition}\label{pro13}
Let $f \in K[[x, y]]$ be convenient. If $f$ is degenerate along some edge of $\Gamma(f)$ then $\mu(f)>\mu_N(f)$.
\end{proposition}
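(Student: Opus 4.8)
The plan is to reduce to a single edge and then run a branch‑by‑branch intersection computation. By Proposition \ref{pro4} I would factor $f=\bar f_1\cdots\bar f_k$ with each $\bar f_i$ convenient and carrying exactly one edge $E_i$. Edges of distinct slope have branches in distinct weighted directions, so the pairwise intersection numbers $i(\bar f_i,\bar f_j)$ are computed from the Newton data alone and coincide with the corresponding cross‑terms of $\mu_N$; together with the additivity $\mu(gh)=\mu(g)+\mu(h)+2i(g,h)-1$ and the analogous additivity of $\mu_N$, subtracting gives $\mu(f)-\mu_N(f)=\sum_i\big(\mu(\bar f_i)-\mu_N(\bar f_i)\big)$. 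Each summand is $\ge 0$ by Theorem \ref{thm0}, so it suffices to produce one factor, corresponding to the degenerate edge, with $\mu(\bar f_i)>\mu_N(\bar f_i)$. An Euler‑relation argument (clean except when $p$ divides the edge‑weight, a case absorbed into the difficulty below) shows that degeneracy of $f$ along $E_i$ transfers to degeneracy of $\bar f_i$ along its edge. I am thus reduced to: $\bar f$ convenient with a single edge from $(0,n)$ to $(m,0)$, degenerate along it, where I must show $\mu(\bar f)>\mu_N(\bar f)=(m-1)(n-1)$ (the value coming from Remark \ref{rm11}).

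For the single‑edge case I would use the polar intersection number. With $n=\mathrm{ord}\,\bar f(0,y)=i(\bar f,x)$, the classical relation $\mu(\bar f)=i(\bar f,\partial_y\bar f)-n+1$ reduces the problem to bounding $i(\bar f,\partial_y\bar f)=\sum_j\mathrm{ord}_t\,\partial_y\bar f(x_j(t),y_j(t))$ over the branches $\bar f_j$ with parametrizations $(x_j,y_j)$. The weighted‑initial form of $\partial_y\bar f$ is $\partial_y\bar f_{\mathrm{in}}$, of weighted degree $m(n-1)$, so Lemma \ref{lm12} gives $\mathrm{ord}_t\,\partial_y\bar f(x_j,y_j)\ge m(n-1)\,\alpha_j/n$ with $\alpha_j=\mathrm{ord}\,x_j(t)$, with equality iff $\partial_y\bar f_{\mathrm{in}}$ does not vanish at the branch direction $(a_j,b_j)=(\mathrm{LC}(x_j),\mathrm{LC}(y_j))$. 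Summing and using $\sum_j\alpha_j=n$ (Lemma \ref{lm13}(a)) recovers $i(\bar f,\partial_y\bar f)\ge m(n-1)$, hence $\mu(\bar f)\ge(m-1)(n-1)=\mu_N(\bar f)$, and the inequality is strict as soon as some branch direction is a zero of $\partial_y\bar f_{\mathrm{in}}$. Each $(a_j,b_j)$ is a torus root of $\bar f_{\mathrm{in}}$ and, conversely, by Lemma \ref{lm13}(b) every torus root of $\bar f_{\mathrm{in}}$ is a branch direction. A short computation with $\bar f_{\mathrm{in}}=c\prod_l(x^{m'}-\lambda_l y^{n'})^{q_l}$ shows that a torus zero of $\mathrm{j}(\bar f_{\mathrm{in}})$ lying on $\{\bar f_{\mathrm{in}}=0\}$ occurs exactly at a repeated factor ($q_l\ge 2$), and there $\partial_y\bar f_{\mathrm{in}}$ (or $\partial_x\bar f_{\mathrm{in}}$, using $\gcd(m',n')=1$) vanishes; such a point is realized by a branch, the corresponding order is strictly larger, and $\mu(\bar f)>\mu_N(\bar f)$.

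The hard part will be the genuinely positive‑characteristic case, in which $\bar f_{\mathrm{in}}$ is reduced yet still degenerate: the witnessing torus zero $(a,b)$ of $\mathrm{j}(\bar f_{\mathrm{in}})$ then has $\bar f_{\mathrm{in}}(a,b)\ne 0$. Such a point is not a branch direction, so Lemma \ref{lm13}(b) says nothing about it, and worse, the polar relation $\mu=i(\bar f,\partial_y\bar f)-n+1$ itself breaks down (for a reduced curve one may even have $\mu=\infty$), so the argument of the previous paragraph does not apply. By an Euler‑relation computation this situation can occur only when $p$ divides $q=\gcd(m,n)$ or one of the reduced weights, and it forces $\partial_x\bar f_{\mathrm{in}}$ and $\partial_y\bar f_{\mathrm{in}}$ to acquire a common weighted‑homogeneous factor $x^{m'}-\varrho\,y^{n'}$ (the equation of the critical orbit). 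I would treat this case directly: using $\mu(\bar f)=i(\partial_x\bar f,\partial_y\bar f)$, the common factor of the initial parts of $\partial_x\bar f$ and $\partial_y\bar f$ produces excess intersection beyond the Newton value $(m-1)(n-1)$ (and $\mu=\infty$ when the factor persists as a common branch). This is precisely the locus of the \emph{wild vanishing cycles} mentioned in the abstract; establishing the strict excess here, uniformly in $p$, is the crux of the proof and the main obstacle.
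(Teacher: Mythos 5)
Your proposal breaks at both of its load-bearing steps in positive characteristic, which is exactly where Proposition \ref{pro13} has content. First, the additivity $\mu(gh)=\mu(g)+\mu(h)+2i(g,h)-1$ that drives your reduction to one edge is a characteristic-zero identity: by additivity of $\delta$ and $r$, its failure is precisely the non-additivity of $\mathrm{wvc}=\mu-2\delta+r-1$, and in $char(K)=2$ the convenient, reduced series $f=(y+x^2)(y+x^2+x^3)=y^2+x^3y+x^4+x^5$ has $j(f)=\langle x^2y,\,x^3\rangle$, hence $\mu(f)=\infty$, while your formula predicts $0+0+2\cdot 3-1=5$. The same example kills your single-edge workhorse: the polar relation would give $\mu(f)=i(f,\partial_y f)-n+1=6-2+1=5$, and moreover $\partial_y f_{\mathrm{in}}=\partial_y(y^2+x^4)=0$ in characteristic $2$, so your claim that the weighted-initial form of $\partial_y \bar f$ is $\partial_y\bar f_{\mathrm{in}}$ of weighted degree $m(n-1)$ is false whenever $p$ interferes with the exponents. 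Note that here $f_{\mathrm{in}}=(y+x^2)^2$ is non-reduced and the critical torus points lie on the curve, i.e.\ this failure occurs already \emph{outside} the case you set aside as ``the hard part''; and for the genuinely wild case you explicitly leave the strict excess unproved. Since your first two paragraphs would essentially settle characteristic zero, what remains open in your proposal is precisely the theorem.

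The paper's proof is engineered to avoid both broken identities: it never factors $f$ and never adds Milnor numbers. It sets $h=xf_x+\lambda yf_y$ with $\lambda$ generic, uses $\mu(f)=i(f_x,f_y)$ (valid in any characteristic by Proposition \ref{pro0}) together with the exact identity $\mu(f)=i(h,yf_y)-i(x,f_y)-i(f_x,y)-1$, and then factors the \emph{polar} curve $h=\bar h_1\cdots\bar h_k$ along the edges of $\Gamma(h)$ via Proposition \ref{pro4}. Intersection multiplicities, unlike $\mu$, are unconditionally additive, so your own local tools --- Lemmas \ref{lm12} and \ref{lm13} applied branchwise, exactly as in your second paragraph --- yield $i(\bar h_i,yf_y)\geq d_i=2V_2(OA_{i-1}A_i)$, strict along a degenerate edge (Claims \ref{cl1} and \ref{cl2}); the possible jump $d_i'\geq d_i$ in the weighted order of the initial form of $yf_y$, which derails your estimate, only strengthens this inequality. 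Summation gives $\mu(f)\geq\mu_N(f)$, strict under the hypothesis. The one condition this needs, $\Gamma(h)=\Gamma(f)$, i.e.\ non-degeneracy at the vertices, is arranged beforehand: if it fails, Propositions \ref{pro11} and \ref{pro12} already give $\mu>\mu_N$, and otherwise one replaces $f$ by $\bar f_m$ (deleting the support in $(p\Bbb N)^2$ and adding $x^m+y^m$ with $p\nmid m$), which preserves $\mu$, does not decrease $\mu_N$, and remains degenerate along an edge. In short: redirect your branch-by-branch estimate from $f$ to the polar $h$; that single change is what makes the argument characteristic-free.
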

\begin{proof}
Let $f(x,y)=\sum c_{\alpha\beta }x^\alpha y^\beta$. Let $f_x,f_y$ be the partials of $f$ and put $h(x,y):=xf_x(x,y)+\lambda y f_y(x,y),$ where $\lambda\in K$ is generic. Then
$$h(x,y)=\sum(\alpha +\lambda \beta )c_{\alpha \beta }x^\alpha y^\beta .$$
Thus $\text{supp}(h)=\text{supp}(f)\setminus (p\Bbb N)^2$  and if $p=0$ then $\text{supp}(h)=\text{supp}(f)$. Hence $\Gamma_+(h)~\subset \Gamma_+(f)$.

\noindent{\bf Case 1:} $f$ is ND along each vertex of $\Gamma(f)$.

Assume now that $(i,j)$ is a vertex of $\Gamma(f)$. Since $f$ is ND along $(i,j)$, $p=0$ or $p\neq 0$ and one of $i,j$ is not divisible by $p$. Therefore $(i,j)\in \text{supp}(f)\setminus (p\Bbb N)^2=\text{supp}(h)$ and then $\Gamma_+(f)\subset \Gamma_+(h)$. Hence $\Gamma(h)= \Gamma(f)$. 

Let $E_i, i=1,\ldots, k$ be edges of $\Gamma(h)$. By Proposition \ref{pro4}, we can write $h=\bar h_1\ldots \bar h_k$, where $\bar h_i$ are convenient and $h_{E_i}(x,y))=monomial\times (\bar h_i)_{in}$. We denote by $m_i$ and $n_i$ the lengths of the projections of $E_i$ on the horizontal and vertical axes.

Let $h=h^{\mathrm{w}}_{d_i}+h^{\mathrm{w}}_{{d_i}+1}+\ldots$ with $h^{\mathrm{w}}_{d_i}\neq 0$ be the $(n_i,m_i)$-weighted homogeneous decomposition of $h$. Then $h^{\mathrm{w}}_{d_i}=h_{E_i}$. Since $E_i$ is also an edge of $\Gamma(f)$, $f=f^{\mathrm{w}}_{d_i}+f^{\mathrm{w}}_{{d_i}+1}+\ldots$ is the $(n_i,m_i)$-weighted homogeneous decomposition of $f$ with $f^{\mathrm{w}}_{d_i}=f_{E_i}$ and then
$$h^{\mathrm{w}}_{d_i}=\sum_{n_i\alpha+m_i\beta=d_i} (\alpha +\lambda \beta )c_{\alpha \beta }x^\alpha y^\beta=x\frac{\partial f^{\mathrm{w}}_{d_i}}{\partial x}+\lambda y\frac{\partial f^{\mathrm{w}}_{d_i}}{\partial y}.$$
Let $yf_y=g^{\mathrm{w}}_{{d_i}^{'}}+g^{\mathrm{w}}_{{d_i}^{'}+1}+\ldots$ be the $(n_i,m_i)$-weighted homogeneous decomposition of $yf_y$. It is easy to see that ${d_i}^{'}\geq {d_i}$ and ${d_i}^{'}={d_i}$ iff $y\frac{\partial f^{\mathrm{w}}_{d_i}}{\partial y}\neq 0$.
\begin{claim}\label{cl1}
Let $A_{i-1},A_i$ be the vertices of the edge $E_i$ and let $V_2 (OA_{i-1}A_i)$ be the volume of triangle $OA_{i-1}A_i$. Then ${d_i}=2V_2 (OA_{i-1}A_i)$.
\end{claim}
{\em Proof.} Let $(c_i,e_i)$ be the coordinates of $A_i, i=0,\ldots,k$. Then $m_i=c_i-c_{i-1}$ and $n_i=e_{i-1}-e_i$. (cf. Fig. 2). 

\setlength{\unitlength}{0.2cm}
\centerline{\begin{picture}(20,20)(0,-6)
\linethickness{0.05mm}
\multiput(0,0)(2,0){8}%
{\line(0,1){14}}
\multiput(0,0)(0,2){7}%
{\line(1,0){16}}
\linethickness{0.5mm}
\put(4,8){\line(3,-2){6}}
\put(4,8){\circle*{0.8}}
\put(10,4){\circle*{0.8}}
%\put(4,4){\circle*{0.6}}
\put(10,8){\circle*{0.6}}
\put(0,8){\circle*{0.6}}
\put(10,0){\circle*{0.6}}
\put(0,0){\circle*{0.8}}
\put(0,0){\line(1,2){4}}
\put(0,0){\line(5,2){10}}
\put(0,0){\line(1,1){6.4}}
\put(2,4){\line(1,1){3.2}}
\put(3.5,1.5){\line(1,1){4.1}}
\put(6.8,2.8){\line(1,1){2}}
\put(-1.4,-1.4){$0$}
\put(-3.5,8){$e_{i-1}$}
\put(10,-1.4){$c_{i}$}
\put(3,-1.4){$c_{i-1}$}
\put(-2,4){$e_{i}$}
\put(10.5,4.5){$A_{i}$}
\put(2.5,9){$A_{i-1}$}
\put(6,-5){Fig. 2.}
\end{picture}}

Considering the rectangle $(0,0);(c_i,0);(c_i,e_{i-1});(0,e_{i-1})$ we have
\begin{eqnarray*}
2V_2(OA_{i-1}A_i)&=&2c_{i}e_{i-1}-c_{i}e_{i}-c_{i-1}e_{i-1}-m_in_i\\
&=& (c_{i-1}+m_i)e_{i-1}+c_{i}(e_{i}+n_i)-c_{i}e_{i}-c_{i-1}e_{i-1}-m_in_i\\
&=& m_ie_{i-1}+c_{i}n_i-m_in_i= m_i (e_{i}+n_i)+c_{i}n_i-m_in_i\\
&=& m_ie_{i}+n_ic_{i}= d_i
\end{eqnarray*}
This proves Claim 1.
\begin{claim}\label{cl2}
$i(\bar h_i,yf_y)\geq {d_i}$, and if $f$ is degenerate along $E_i$ then ~$i(\bar h_i,yf_y)> {d_i}$.
\end{claim}
{\em Proof.} Let $(x_j(t),y_j(t)), j=1,\ldots,r$ be parametrizations of the branches $\bar h_{i,j}$ of $\bar h_i$. Then by Lemma \ref{lm13}, we have
$\text{LT}(x_j(t))=a_j t^{\alpha_j}$ and $\text{LT}(y_j(t))= b_jt^{\beta_j}$, where $a_j,b_j\in K^*, \bar h_i(a_j,b_j)=0, \alpha_j:\beta_j=n_i:m_i$ for all $j=1,\ldots,r$ and $\alpha_1+\ldots+\alpha_r=n_i$. It follows from Lemma \ref{lm12} that $\mathrm{ord}(yf_y)(x_j(t),y_j(t))\geq \frac{{d_i}^{'}\alpha_j}{n_i}$ for all $j=1,\ldots,r$. Thus 
$$i(\bar h_i,yf_y)=\sum_{j=1}^r \mathrm{ord}(yf_y)(x_j(t),y_j(t))\geq\sum_{j=1}^r \frac{{d_i}^{'}\alpha_j}{n_i}= {d_i}^{'}\geq {d_i}.$$

Assume that $f$ is degenerate along $E_i$ then there exist $a,b\neq 0$ such that 
$$x\frac{\partial f^{\mathrm{w}}_{d_i}}{\partial x}(a,b)=y\frac{\partial f^{\mathrm{w}}_{d_i}}{\partial y}(a,b)=0.$$
Therefore $h_{d_i}(a,b)=0$. Lemma \ref{lm13} implies that there is a parametrization of a branch of $\bar h_i$ such that $\text{LT}(\bar x(t))=a t^{\alpha}$ and $\text{LT}(\bar y(t))=b t^{\beta}$. We may assume that $(\bar x(t),\bar y(t))$ is a parametrization of the branch $\bar h_{i,1}$. Then $\alpha=\alpha_1$ and $\beta=\beta_1$.

To show $i(\bar h_i,yf_y)>{d_i}$, we may restrict to the case that ${d_i}^{'}= {d_i}$, because of the inequality $i(\bar h_i,yf_y)\geq {d_i}^{'}\geq {d_i}$. As ${d_i}^{'}={d_i}$ then $g^{\mathrm{w}}_{{d_i}^{'}}(a,b)=y\frac{\partial f^{\mathrm{w}}_{d_i}}{\partial y}(a,b)=0$. Lemma \ref{lm12} yields 
$$\mathrm{ord}(yf_y)(\bar x(t),\bar y(t))>\frac{{d_i}\alpha_1}{n_i}.$$
Thus 
\begin{eqnarray*}
i(\bar h_i,yf_y)&=&\mathrm{ord}(yf_y)(\bar x(t),\bar y(t))+\sum_{j=2}^r \mathrm{ord}(yf_y)(x_j(t),y_j(t))\\
&>&\frac{{d_i}\alpha_1}{n_i}+\sum_{j=2}^r \frac{{d_i}^{'}\alpha_j}{n_i}={d_i}.
\end{eqnarray*}
This proves Claim 2.

It now follows from Claim \ref{cl1} and Claim \ref{cl2} that
$$i(h,yf_y)\geq \sum_{i=1}^{k} 2V_2(OA_{i-1}A_i)=2V_2(\Gamma_-(f)).$$ 
Hence
\begin{eqnarray*}
\mu(f)=i(f_x,f_y)&=&i(h,yf_y)-i(x,f_y)-i(f_x,y)-1\\
&\geq& 2V_2(\Gamma_-(f))-(e_0-1)-(c_k-1)-1\\
&=& \mu_N(f).
\end{eqnarray*}
Moreover, if $f$ is degenerate along some edge of $\Gamma(f)$ then $\mu(f)>\mu_N(f)$ by Claim 1 and 2. This proves of Case 1.

\noindent{\bf Case 2:} In the general case, by propositions \ref{pro11}, \ref{pro12} we may assume that $f$ is ND along each inner vertex and ND1 along the two vertice on the axes of $\Gamma(f)$. For $m$ sufficiently large and $p\not | m$, we put
$$\bar f_m(x,y)=\sum_{(\alpha,\beta)\not\in (p\Bbb N)^2}c_{\alpha\beta}x^\alpha y^\beta+x^m+y^m.$$
Then 
$$\mu(\bar f_m)=\mu(f_m)=\mu(f)\text{ and }\mu_N(\bar f_m)\geq \mu_N(f_m)=\mu_N(f),$$
where the inequality follows from Lemma \ref{lm11}. 
\begin{claim}\label{cl3}
$\bar f_m$ is degenerate along some edge of $\Gamma(\bar f)$.
\end{claim}
{\em Proof.} By the assumption $f$ is degenerate along some edge $E$ of $\Gamma(f)$. If $E$ is also an edge of $\Gamma(\bar f_m)$ then $j(\textrm{in}_E(\bar f_m)=j(\textrm{in}_E(f))$ and hence $\bar f_m$ is degenerate along $E$. If $E$ is not an edge of $\Gamma(\bar f_m)$, then $E$ must meet the axes since $f$ is ND along each inner vertex of $\Gamma(f)$. We may assume that $(0,n)$ is a vertex of $E$. We will show that 
$$\sharp (\text{supp}(\bar f_m)\cap E)\geq 2.$$
Let $(1,j_1)$ be the intersection point of $E$ and the line $x=1$. Since $f$ is ND1 along $(0,n)$, either $(0,n)\in \text{supp}(\bar f_m)\cap E$ or $(1,j_1)\in \text{supp}(\bar f_m)\cap E$, i.e. $\text{supp}(\bar f_m)\cap E\neq \emptyset$ On the other hand, it is easy to see that $\sharp (\text{supp}(\bar f_m)\cap E)\neq 1$ since $f$ is degenerate along the edge $E$. Hence $\sharp (\text{supp}(\bar f_m)\cap E)\geq 2$. Let us denote by $\bar E$ the convex hull of the set $\text{supp}(\bar f_m)\cap E$. Then $\bar E$ is an edge of $\Gamma(\bar f_m)$ and $j(\textrm{in}_{\bar E}(\bar f_m)=j(\textrm{in}_E(f))$. Thus $\bar f_m$ is degenerate along $\bar E$ since $f$ is degenerate along the edge $E$, which proves Claim 3.

Now, by definition, $\bar f_m$ is ND along each vertex of $\Gamma(\bar f_m)$. Since $\bar f_m$ is degenerate along some edge of $\Gamma(\bar f_m)$, applying the first case to $\bar f_m$, we get $\mu(\bar f_m)>\mu_N(\bar f_m)$. Hence
$$\mu(f)=\mu(\bar f_m)>\mu_N(\bar f_m)\geq\mu_N(f).$$
This proves Proposition \ref{pro13}.
\end{proof}
\begin{theorem}\label{thm11}
Let $f \in \mathfrak m\subset K[[x, y]]$ and let $f_m=f+x^m+y^m$. Then the following are equivalent
\begin{itemize}
\item[(i)] $\mu(f)=\mu_N(f)<\infty$.
\item[(ii)] $\mu(f)<\infty$ and $f_m$ is NND1 for some large integer number $m$.
\item[(iii)] $f$ is INND.
\end{itemize}
\end{theorem}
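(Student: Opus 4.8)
The plan is to prove the cycle (iii) $\Rightarrow$ (i) $\Rightarrow$ (ii) $\Rightarrow$ (iii), so that the analytic input (Theorem \ref{thm1} and the degeneracy estimates of Propositions \ref{pro11}--\ref{pro13}) is reused rather than duplicated. The implication (iii) $\Rightarrow$ (i) is immediate, being exactly Theorem \ref{thm1}. For (i) $\Rightarrow$ (ii) I would first record two stabilization facts for $m\gg 0$: since $\mu(f)<\infty$, adjoining the high powers $x^m,y^m$ does not change the Milnor number, so $\mu(f_m)=\mu(f)$; and since $\mu_N(\Gamma_-(f_m))$ is a non-decreasing integer sequence bounded by the finite number $\mu_N(f)$, it stabilizes, so $\mu_N(f_m)=\mu_N(\Gamma_-(f_m))=\mu_N(f)$ (here $f_m$ is convenient). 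Thus $\mu(f_m)=\mu_N(f_m)<\infty$ with $f_m$ convenient, and the contrapositives of Propositions \ref{pro11}, \ref{pro12}, \ref{pro13} forbid simultaneously ``not ND1 along an axis vertex'', ``degenerate along an inner vertex'', and ``degenerate along an edge''. Since all edges of a convenient diagram are inner faces, this says precisely that $f_m$ is ND along every inner face and ND1 along the two axis vertices, i.e. $f_m$ is NND1.

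The heart of the matter is (ii) $\Rightarrow$ (iii). Given that $f_m$ is NND1 for some large $m$, I would take the $C$-polytope to be $P:=\Gamma(f_m)$ (convenient, hence genuinely a $C$-polytope) and verify that $f$ is INND with respect to $P$. As $\mathrm{supp}(f)\subseteq\mathrm{supp}(f_m)$, no point of $\mathrm{supp}(f)$ lies below $P$, so only ``IND along each inner face $\Delta$ of $P$'' remains. The guiding observation is that for any face $\Delta$ not touching the axes one has $\mathrm{in}_\Delta(f)=\mathrm{in}_\Delta(f_m)$, and there IND is literally the same as ND: for a torus zero $q$ the space $H_q$ is all of $\mathbb{R}^2$ so that avoiding it forces the absence of torus zeros, while the remaining axis conditions are vacuous. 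Hence ND of $f_m$ along inner faces delivers IND of $f$ at every face off the axes.

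The delicate points are the two edges of $P$ meeting the axes; I treat the $y$-direction, the other being symmetric. If $f$ already meets the $y$-axis, the vertex $(0,n)$ lies in $\mathrm{supp}(f)$, the axis edge $E_1$ is unchanged, $\mathrm{in}_{E_1}(f)=\mathrm{in}_{E_1}(f_m)$, and evaluating the partials at a putative zero $(0,q_2)$ shows that the absence of such a boundary zero is exactly ND1 along $(0,n)$; together with ND along $E_1$ this is precisely IND of $f$ along $E_1$. If instead $f$ does not meet the $y$-axis, I would exploit finiteness of $\mu$: the minimal $x$-exponent occurring in $f$ cannot be $\geq 2$ (else $x^2\mid f$ and the singularity is non-isolated, $\mu=\infty$), so it equals $1$; for $m$ large the axis edge of $\Gamma(f_m)$ then joins $(0,m)$ directly to the unique lowest lattice point $(1,j_{\min})\in\mathrm{supp}(f)$, whence $\mathrm{in}_{E_1}(f)=c\,xy^{j_{\min}}$ is a single monomial, which is trivially IND. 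This completes INND of $f$.

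I expect the main obstacle to be precisely this axis-edge analysis of (ii) $\Rightarrow$ (iii) in positive characteristic: matching the combinatorial condition ND1 at an axis vertex with the vanishing of the Jacobian of $\mathrm{in}_{E_1}(f)$ along the axis, and correctly identifying the initial form along the \emph{artificial} axis edge created by $x^m+y^m$ (which differs from $\mathrm{in}_{E_1}(f_m)$ by the added monomial). The finiteness input $\mu(f)<\infty$, forcing the extreme exponent to be $1$, is what makes the non-convenient case collapse to a harmless monomial; keeping the equalities $\mu(f_m)=\mu(f)$ and $\mu_N(f_m)=\mu_N(f)$ honest for the chosen $m$ is the bookkeeping that ties the three conditions together.
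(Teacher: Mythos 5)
Your proposal is correct and follows essentially the same route as the paper: (iii)$\Rightarrow$(i) via Theorem \ref{thm1}, (i)$\Rightarrow$(ii) by stabilizing $\mu(f_m)$ and $\mu_N(\Gamma_-(f_m))$ and invoking Propositions \ref{pro11}--\ref{pro13}, and (ii)$\Rightarrow$(iii) by checking IND face-by-face on $P=\Gamma(f_m)$, including the same key points (off-axis faces where IND reduces to ND, the partial-derivative computation at $(0,b)$ matching ND1, and the use of $\mu(f)<\infty$ to force the extreme $x$-exponent to be $1$ so the artificial axis edge carries a harmless monomial initial form). The only difference is cosmetic: you verify IND directly where the paper argues by contradiction.
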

\begin{proof}
$(i)\Rightarrow (ii):$ Since $\mu(f)=\mu_N(f)<\infty$ we have by definition of $\mu_N(f)$
$$\mu(f_m)=\mu(f)=\mu_N(f)=\mu_N(f_m)<\infty.$$
Combining Propositions \ref{pro11}, \ref{pro12} and \ref{pro13} we get the claim.

$(ii)\Rightarrow (iii):$ Assume that $\mu(f)<\infty$ and $f_m$ is NND1. Firstly, it is easy to see that there is an $M\in\Bbb N$ such that $\Gamma(f)\subset\Gamma(f_M)$. It suffices to show $f$ is INND w.r.t. $\Gamma(f_m)$ for all $m>M$. We argue by contradiction. Suppose that it is not true. Then $f$ is not IND along some edge $\Delta$ of $\Gamma(f_m)$ which meets the axes, since $f_m$ is NND1. We may assume that $\Delta$ meets the axes at $(0,n)$. Let $(k,l)$ be the second vertex of $\Delta$. We consider two cases:

$\bullet$ If $l=0$, i.e. $\Gamma(f_m)$ has only one edge $\Delta$. Then $\Delta$ is also a unique edge of $\Gamma(f)$ and $\text{in}_\Delta(f)=\text{in}_\Delta(f_m)$. Since $f$ is not IND along $\Delta$, there exists $(a,b)\in K\setminus \{(0,0)\}$ which is a zero point of $\textrm{j}(\textrm{in}_\Delta(f))$. Beside, since $f_m$ is ND along $\Delta$, either $a=0$ or $b=0$. Assume that $a=0$ and $b\neq 0$. We will show that $f_m$ is not ND1 along $(0,n)$. Firstly, we write $\textrm{in}_\Delta(f_m)=c_{0n}y^n+x\cdot g(x,y)$, then $\dfrac{\partial \textrm{in}_\Delta(f_m)}{\partial y}=ny^{n-1}+x\cdot \dfrac{\partial g}{\partial y}$. Thus $$\dfrac{\partial \textrm{in}_\Delta(f_m)}{\partial y}(0,b)=ny^{n-1}=0\Rightarrow p\neq 0 \text{ and }p|n.$$
We now write $\textrm{in}_\Delta(f_m)=c_{0n}y^n+c_{1j}xy^{j}+x^2\cdot h(x,y)$, then $$\dfrac{\partial \textrm{in}_\Delta(f_m)}{\partial x}=c_{1j}y^{j}+2x\cdot h(x,y)+x^2\cdot\dfrac{\partial h}{\partial x}.$$ Since $\dfrac{\partial \textrm{in}_\Delta(f_m)}{\partial x}(0,b)=0$, $c_{1j}=0$. Hence $f_m$ is not ND1 along $(0,n)$, a contradiction.

$\bullet$ Assume that $l>0$. If $\Delta$ is also an edge of $\Gamma(f)$ then $\text{in}_\Delta(f)=\text{in}_\Delta(f_m)$. Since $f$ is not IND along $\Delta$, there exists $(a,b)\in K\times K^*$ being a zero of $\textrm{j}(\textrm{in}_\Delta(f))$. Since $f_m$ is ND along $\Delta$, $a=0$. Analogously as above $f_m$ is not ND1 along $(0,n)$ and we get a contradiction. Assume now that $\Delta$ is not an edge of $\Gamma(f)$, i.e. $m=n$ and $x|f(x,y)$. Let $P$ be the end point of $\Gamma(f)$ closest to $y$-axis. It follows from $\Gamma(f)\subset\Gamma(f_M)$ and $m>M$ that $P$ must be a vertex of $\Delta$, i.e. $P=(k,l)$. This implies $f=x^k\cdot h(x,y)$. Since $\mu(f)<\infty$, $k=1$. Then $\textrm{in}_\Delta(f)=c_{0n}y^n+c_{1l}xy^{l}$ and clearly $f$ is always IND along $\Delta$, a contradiction. Hence $f$ is INND w.r.t. $\Gamma(f_m)$ and then it is INND.

$(iii)\Rightarrow (i):$ See Theorem \ref{thm1}.
\end{proof}
\begin{corollary}\label{cor11}
Let $f \in K[[x, y]]$ and let $M\in\Bbb N$ such that $\Gamma(f)\subset\Gamma(f_M)$. Then $f$ is INND if and only if it is INND w.r.t. $\Gamma (f_m)$ for some (equivalently for all) $m>M$.
\end{corollary}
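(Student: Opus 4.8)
Write $(A)$ for ``$f$ is INND'', $(B)$ for ``$f$ is INND w.r.t. $\Gamma(f_m)$ for some $m>M$'' and $(C)$ for ``$f$ is INND w.r.t. $\Gamma(f_m)$ for all $m>M$''. The plan is to prove $(C)\Rightarrow(B)\Rightarrow(A)\Rightarrow(C)$. The implication $(C)\Rightarrow(B)$ is immediate, and $(B)\Rightarrow(A)$ holds by definition: since $f_m=f+x^m+y^m$ is convenient, $\Gamma(f_m)$ is a $C$-polytope, so $f$ being INND w.r.t. $\Gamma(f_m)$ is a special case of being INND w.r.t. some $C$-polytope. Thus the whole content lies in $(A)\Rightarrow(C)$.

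For $(A)\Rightarrow(C)$ I would first apply Theorem \ref{thm11}, $(iii)\Rightarrow(i)$, to replace the hypothesis by the equality $\mu(f)=\mu_N(f)<\infty$. The next step is to show that this equality is inherited by every $f_m$ with $m>M$, that is, $\mu(f_m)=\mu_N(f_m)<\infty$. For $\mu_N$ this follows from Lemma \ref{lm11}(b): once $\Gamma(f)\subset\Gamma(f_M)$, the region $\Gamma_-(f_m)\cap\Bbb R^2_{\geq 1}$ no longer depends on $m$, so $\mu_N(f_m)$ equals the common stable value, which by the definition of $\mu_N(f)$ as a supremum is $\mu_N(f)$; here the finiteness $\mu(f)<\infty$ is used to rule out $x^2\mid f$ or $y^2\mid f$, so that a non-convenient direction contributes an endpoint of height exactly $1$ and the added axis monomial does not enlarge the relevant region. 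For $\mu$ one has $\mu(f_m)=\mu(f)$, trivially when $p\mid m$ (the partials of $x^m+y^m$ vanish, whence $j(f_m)=j(f)$) and by finite determinacy otherwise. Hence $f_m$ is convenient with $\mu(f_m)=\mu_N(f_m)<\infty$ for every $m>M$, and the contrapositives of Propositions \ref{pro11}, \ref{pro12} and \ref{pro13} show that $f_m$ is ND1 along each vertex on the axes, ND along each inner vertex, and ND along each edge; that is, $f_m$ is NND1.

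It then remains to pass from ``$f_m$ is NND1'' to ``$f$ is INND w.r.t. $\Gamma(f_m)$'', and here I would reuse the argument already given for $(ii)\Rightarrow(iii)$ in Theorem \ref{thm11}. No point of $\mathrm{supp}(f)\subseteq\Gamma_+(f)\subseteq\Gamma_+(f_m)$ lies below $\Gamma(f_m)$, and for every inner face $\Delta$ of $\Gamma(f_m)$ not meeting the axes one has $\mathrm{in}_\Delta(f)=\mathrm{in}_\Delta(f_m)$, so IND of $f$ along $\Delta$ is inherited from ND of $f_m$; the inner faces meeting the axes are handled by the case distinction in that proof, using $\mu(f)<\infty$ and ND1 of $f_m$. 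As $m>M$ was arbitrary this yields $(C)$. The main obstacle is the middle step: one must guarantee $\mu(f_m)=\mu_N(f_m)$ \emph{uniformly} for all $m>M$, and not merely for some large $m$, which is exactly where the finiteness of $\mu(f)$ and the equality criterion of Lemma \ref{lm11}(b) are indispensable; once this uniformity is secured, the remainder reduces cleanly to results already proved.
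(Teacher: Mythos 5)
Your reduction of $(A)\Rightarrow(C)$ to the statement ``$f_m$ is NND1 for every $m>M$'' is reasonable, and the $\mu_N$-half of your middle step is correct: Lemma \ref{lm11}(b), together with your observation that $\mu(f)<\infty$ excludes $x^2\mid f$ and $y^2\mid f$, does give $\mu_N(f_m)=\mu_N(f)$ for all $m>M$. The genuine gap is the other half, the claim that $\mu(f_m)=\mu(f)$ for \emph{all} $m>M$. Your argument covers $p\mid m$ (where indeed $j(f_m)=j(f)$), but ``by finite determinacy otherwise'' only applies once $m$ exceeds the determinacy bound of $f$, which is of the order of $\mu(f)+2$ and has nothing to do with $M$: the condition $\Gamma(f)\subset\Gamma(f_M)$ is already satisfied when $M$ is roughly the size of the axis data (see the Remark after Corollary \ref{cor10}). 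For $f=x^N+y^N$ one may take $M=N$ while $\mu(f)=(N-1)^2$, so every $m$ with $N<m\leq (N-1)^2+1$ and $p\nmid m$ is left uncovered; for such $m$ you only know $\mu(f_m)\geq\mu_N(f_m)=\mu(f)$ from Theorem \ref{thm0}, which is the wrong direction for invoking the contrapositives of Propositions \ref{pro11}, \ref{pro12}, \ref{pro13}. In characteristic $0$ the hole can be patched ($f$ INND implies $f$ NND by Corollary \ref{cor13}, and for $m>M$ the diagram and all initial forms of $f_m$ agree with those of $f$, so Kouchnirenko gives $\mu(f_m)=\mu_N(f_m)$), but in characteristic $p>0$ INND does not imply NND (e.g.\ $x^3+xy+y^3$, $p=3$), and the uniform statement you need is in substance Corollary \ref{cor10}, which the paper derives \emph{from} the corollary you are proving --- so establishing it first runs into a circularity.

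The paper's proof sidesteps any uniform control of $\mu(f_m)$. It chooses a single sufficiently large $m_1>M$ to which Theorem \ref{thm11} applies, concluding that $f_{m_1}$ is NND1 and hence, via the $(ii)\Rightarrow(iii)$ argument, that $f$ is INND w.r.t.\ $\Gamma(f_{m_1})$; then, for an arbitrary $m>M$, it transfers the condition combinatorially: every inner face $\Delta_m$ of $\Gamma(f_m)$ admits an inner face $\Delta_{m_1}$ of $\Gamma(f_{m_1})$ with $\mathrm{in}_{\Delta_m}(f)=\mathrm{in}_{\Delta_{m_1}}(f)$, so IND along $\Delta_{m_1}$ yields IND along $\Delta_m$. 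Indeed the two diagrams share all faces coming from $\Gamma(f)$, and the extra edges joining $(0,m)$ resp.\ $(0,m_1)$ (and similarly on the $x$-axis) to an endpoint of $\Gamma(f)$ carry the same terms of $f$ for all $m,m_1>M$; here $\mu(f)<\infty$ forces that endpoint to lie at distance $1$ from the axis, exactly the point you yourself noted in the $\mu_N$ argument. Your final step (reusing the case analysis from the proof of Theorem \ref{thm11}) is fine, but the middle step should be replaced by this face-by-face transfer from one good $m_1$, which requires no comparison of Milnor numbers for intermediate values of $m$.
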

\begin{proof}
One direction is obvious, it remains to show $f$ is INND $\Rightarrow $ $f$ is INND w.r.t. $\Gamma (f_m)$ for all $m>M$. We take $m_1>M$ satisfying Theorem \ref{thm11} and then 
$$f \text{ is INND }\Rightarrow \mu(f)<\infty \text{ and } f_{m_1} \text{ is NND1 }\Rightarrow f \text{ is INND w.r.t. }\Gamma (f_{m_1}).$$
For each inner face $\Delta_m$ of $\Gamma (f_m)$, since $m,m_1>M$, there is an inner face $\Delta_{m_1}$ of $\Gamma (f_{m_1})$ such that $\textrm{in}_{\Delta_{m}}(f)=\textrm{in}_{\Delta_{m_1}}(f)$. Thus $f$ is IND along $\Delta_{m}$ since it is IND along $\Delta_{m_1}$. Hence $f$ is INND w.r.t. $\Gamma (f_m)$.
 \end{proof}
\begin{corollary}\label{cor10}
Let $M\in\Bbb N$ be such that $\Gamma(f)\subset\Gamma(f_M)$. Then Theorem \ref{thm11} holds for each $m>M$. 
\end{corollary}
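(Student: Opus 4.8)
The plan is to fix an arbitrary $m>M$ and to prove the three conditions (i)$\Leftrightarrow$(ii)$\Leftrightarrow$(iii) of Theorem \ref{thm11} with this \emph{particular} $m$ substituted into (ii). Two of the three implications need no new argument. The implication (iii)$\Rightarrow$(i) is Theorem \ref{thm1} and involves no $m$ whatsoever. The implication (ii)$\Rightarrow$(iii) is exactly what the proof of Theorem \ref{thm11} establishes for every $m>M$: assuming $\mu(f)<\infty$ and $f_m$ NND1, that proof shows $f$ is INND w.r.t. $\Gamma(f_m)$, hence INND, and its argument applies verbatim to any fixed $m>M$. Thus the only point that genuinely depends on the choice of $m$ is (i)$\Rightarrow$(ii), and it is precisely this direction that I would upgrade from ``some large $m$'' to ``each $m>M$''.

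For (i)$\Rightarrow$(ii) with a fixed $m>M$, I would start from (i), which by Theorem \ref{thm11} already yields that $f$ is INND and in particular $\mu(f)<\infty$. Corollary \ref{cor11} then gives that $f$ is INND w.r.t. $\Gamma(f_m)$ for our chosen $m$. It therefore remains to translate ``$f$ is INND w.r.t. $\Gamma(f_m)$'' into ``$f_m$ is NND1''. Now $f_m$ is convenient, and on each inner face $\Delta$ of $\Gamma(f_m)$ one has $\text{in}_\Delta(f)=\text{in}_\Delta(f_m)$, while IND along $\Delta$ always forces the absence of torus zeros of $\text{j}(\text{in}_\Delta(f))$, i.e.\ ND along $\Delta$; hence $f_m$ is automatically ND along every inner face. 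What still has to be extracted are the vertex conditions ND1 at the two vertices of $\Gamma(f_m)$ on the axes.

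The heart of the matter, and the step I expect to be the main obstacle, is to recover these vertex conditions from the edge conditions. I would argue through the unique inner edge $\Delta$ abutting such a vertex, say $(0,n)$ on the $y$-axis. Since $\Delta$ meets $\{x=0\}$ only in the single point $(0,n)$, the definition of IND along $\Delta$ forbids, in particular, the zeros of $\text{j}(\text{in}_\Delta(f))$ of the form $(0,t)$ with $t\neq 0$. Computing the two partials at such a point, $\partial_y\text{in}_\Delta(f)(0,t)=n c_{0n}t^{n-1}$ and $\partial_x\text{in}_\Delta(f)(0,t)=c_{1j_1}t^{j_1}$ (with $(1,j_1)=\Delta\cap\{x=1\}$), shows that $(0,t)$ is a zero \emph{iff} $p\mid n$ and $c_{1j_1}=0$ (or $j_1\notin\Bbb N$), which is exactly the failure of ND1 along $(0,n)$. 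Hence IND along $\Delta$ is equivalent to ND1 along $(0,n)$ together with the torus condition, and symmetrically at the $x$-axis vertex. Assembling these local equivalences over all inner faces yields the identification $f_m \text{ NND1}\Leftrightarrow f\text{ INND w.r.t. }\Gamma(f_m)$, which completes (i)$\Rightarrow$(ii).

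A final subtlety I would address is the non-convenient case, in which one axis vertex of $\Gamma(f_m)$, say $(m,0)$, is produced artificially by the monomial $x^m$ and may satisfy $p\mid m$. Here I would invoke $\mu(f)<\infty$, which forces the vertex of $\Gamma(f)$ closest to the $x$-axis, say $(c_k,e_k)$, to have $e_k\leq 1$ (otherwise $y^2\mid f$ and the singularity is non-isolated). Consequently $\Delta\cap\{y=1\}$ is that vertex $(c_k,1)\in\text{supp}(f)$, so $c_{c_k,1}\neq 0$ and ND1 along $(m,0)$ holds automatically; this matches the fact that $\text{in}_{\Delta}(f)=c_{c_k,1}x^{c_k}y$ has all its Jacobian zeros on $\{x=0\}$, so IND along the artificial edge is vacuous. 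With this the translation is complete for every $m>M$, and the three conditions of Theorem \ref{thm11} are equivalent for each such $m$.
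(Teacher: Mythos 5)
Your proposal is correct, and its skeleton matches the paper's: both dispose of (i)$\Leftrightarrow$(iii) and (ii)$\Rightarrow$(iii) as $m$-independent (the latter taken verbatim from the proof of Theorem \ref{thm11}), and both reduce the remaining direction (you prove (i)$\Rightarrow$(ii), the paper proves (iii)$\Rightarrow$(ii), which is the same thing given (i)$\Leftrightarrow$(iii)) via Corollary \ref{cor11} to showing that $f$ INND w.r.t.\ $\Gamma(f_m)$ forces $f_m$ to be NND1. For this decisive step, however, you take a genuinely different route. The paper argues by contradiction: if ND1 fails at, say, $(0,n)$, then $p\mid n$ and $\Gamma(f_m)\cap\{x=1\}\cap\mathrm{supp}(f_m)=\emptyset$, so $(f_m)_{in}=c_{0n}y^n+x^2\cdot h(x,y)$, whence $\mu((f_m)_{in})=\infty$; Theorem \ref{thm1} then shows $(f_m)_{in}$, hence $f_m$, is not INND, contradicting INND w.r.t.\ $\Gamma(f_m)$. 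You instead compute directly on the edge $\Delta$ abutting the axis vertex: zeros of $\mathrm{j}(\mathrm{in}_\Delta(f))$ of the form $(0,t)$, $t\neq 0$, are forbidden by IND along $\Delta$ (since $(0,n)\in\Delta\cap H_q$), and such zeros exist exactly when $p\mid n$ and $c_{1j_1}=0$ --- in effect you run the computation inside the paper's proof of Theorem \ref{thm11}, (ii)$\Rightarrow$(iii), in reverse. Your version is more elementary and self-contained (it avoids invoking Theorem \ref{thm1} a second time) and makes explicit that IND along the axis-abutting edge is equivalent to ND1 at the vertex together with the torus condition; the price is the case analysis for the non-convenient situation, which you rightly undertake and which the paper's $\mu=\infty$ contradiction sidesteps entirely. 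Two small repairs are needed there: your blanket claim $\mathrm{in}_\Delta(f)=\mathrm{in}_\Delta(f_m)$ on all inner faces of $\Gamma(f_m)$ fails precisely on the artificial edges created by $x^m$, $y^m$ (you implicitly fix this later), and since NND1 demands ND of $f_m$ along \emph{every} inner face, you should also verify ND of $f_m$ along the artificial edge itself --- immediate, since $\partial_y\,\mathrm{in}_\Delta(f_m)=c_{c_k,1}x^{c_k}$ has no zero in the torus --- rather than only remarking that IND of $f$ is vacuous there.
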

\begin{remark}{\rm
Let $\mu(f)<\infty$. Then $M$ can be chosen as the maximum of $n_1$ and $m_1$, where $n_1=n$ if $\Gamma(f)\cap \{x=0\}=\{(0,n)\}$ and $n_1=2i_1$ if $\Gamma(f)\cap \{x=0\}=\emptyset$ and $\Gamma(f)\cap \{x=1\}=\{(1,i_1)\}$. Similarly we define $m_1$ with $x$ replaced by $y$. This remark and the previous corollaries are important for concrete computation.
}\end{remark}
\begin{proof}[Proof of Corollary \ref{cor10}]
Clearly, the equivalence $(i)\Leftrightarrow (iii)$ does not depend on $m$ and as in the proof of Theorem \ref{thm11} the implication $(ii)\Rightarrow (iii)$ holds for all $m>M$. It remains to show that $f$ is INND $\Rightarrow $ $f_m$ is NND1. By Corollary \ref{cor11}, it suffices to show that $f$ is INND w.r.t. $\Gamma (f_m)$ $\Rightarrow $ $f_m$ is NND1. By contradiction, suppose that $f$ is INND w.r.t. $\Gamma (f_m)$ and $f_m$ is not NND1. Then $f$ is not ND1 along some vertex of $\Gamma(f_m)$ in the axes. Assume that $f$ is not ND1 along $(0,n)\in\Gamma(f_m)$. Then 
$$p\neq 0, p|n\text{ and } \Gamma(f_m)\cap\{x=1\}\cap\text{supp}(f_m)=\emptyset,$$
i.e. $(f_m)_{in}=c_{0n}y^n+x^2\cdot h(x,y)$. This implies $\mu((f_m)_{in})=\infty$. By Theorem \ref{thm1}, $(f_m)_{in}$ is not INND and then $f_m$ is also not INND, a contradiction.
\end{proof}
\begin{corollary}\label{cor13}
Let $K$ is a field of characteristic zero and $f \in \mathfrak{m}\subset K[[x, y]]$. Then the following are equivalent
\begin{itemize}
\item[(i)] $\mu(f)=\mu_N(f)<\infty$.
\item[(ii)] $f$ is INND.
\item[(iii)] $f$ is NND and $\mu_N(f)<\infty$.
\end{itemize}
In particular, if $f$ is convenient then (i)-(iii) are equivalent to 
\begin{itemize}
\item[(iv)] $f$ is NND.
\end{itemize}
\end{corollary}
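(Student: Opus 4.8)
The plan is to reduce the corollary to a single genuinely characteristic-zero implication and to feed everything else from the results already proved. First I would dispatch the cheap equivalences, which hold in any characteristic. By Theorem~\ref{thm11} the conditions $\mu(f)=\mu_N(f)<\infty$ and ``$f$ is INND'' are already equivalent, so $(i)\Leftrightarrow(ii)$ comes for free. By Proposition~\ref{pro} every NND series satisfies $\mu_N(f)=\mu(f)$, hence if $f$ is NND with $\mu_N(f)<\infty$ then $\mu(f)=\mu_N(f)<\infty$; this is $(iii)\Rightarrow(i)$. Consequently the only implication carrying real content is $(ii)\Rightarrow(iii)$: in characteristic zero, INND should force NND, the finiteness $\mu_N(f)<\infty$ being already delivered by Theorem~\ref{thm1}.

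The backbone of $(ii)\Rightarrow(iii)$ is a soft observation: for any face $\Delta$ of a $C$-polytope, IND along $\Delta$ implies ND along $\Delta$. Indeed, a torus zero $q\in(K^*)^n$ of $\mathrm j(\mathrm{in}_\Delta f)$ would have $H_q=\Bbb R^n$ (the intersection over the empty index set), so $\Delta$ would meet $H_q$, contradicting IND; hence IND forbids torus zeros, which is exactly ND. Thus INND already yields ND along every inner face of the ambient $C$-polytope, and this step needs no assumption on the characteristic.

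Characteristic zero then enters only to handle the faces on the coordinate axes. If $(0,n)$ is the vertex of $\Gamma(f)$ on the $y$-axis, then $\mathrm{in}_{(0,n)}(f)=c_{0n}y^n$ and $\mathrm j(c_{0n}y^n)=\langle n\,c_{0n}\,y^{n-1}\rangle$, which in characteristic zero (where $n\,c_{0n}\neq 0$) has no zero in $(K^*)^2$; so $f$ is automatically ND along each axis vertex, and symmetrically on the $x$-axis. It remains to transfer the IND hypothesis from an enlarged $C$-polytope down to the faces of $\Gamma(f)$ itself. Since $f$ is INND, Theorem~\ref{thm1} gives $\mu(f)=\mu_N(f)<\infty$ (in particular the finiteness required in $(iii)$), so I may fix $M$ with $\Gamma(f)\subset\Gamma(f_M)$ and invoke Corollary~\ref{cor11} to get that $f$ is INND w.r.t.\ $\Gamma(f_m)$ for $m>M$. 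Every face of $\Gamma(f)$ is either an axis vertex, already treated, or an inner face; and for large $m$ each inner face $\Delta$ of $\Gamma(f)$ persists as a face of $\Gamma(f_m)$ with $\mathrm{in}_\Delta(f_m)=\mathrm{in}_\Delta(f)$, because the two extra monomials $x^m,y^m$ alter the diagram only near the axes. Along such $\Delta$ inner non-degeneracy gives IND, hence ND by the observation above. Combining the two cases, $f$ is ND along every face of $\Gamma(f)$, i.e.\ NND, which completes $(ii)\Rightarrow(iii)$ and hence the equivalence of $(i)$--$(iii)$.

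For the final clause I would note that a convenient $f$ has bounded $\Gamma_-(f)$, so $\mu_N(f)=\mu_N(\Gamma_-(f))<\infty$ holds unconditionally; hence for convenient $f$ condition $(iii)$ reduces to ``$f$ is NND'', which is $(iv)$, and the asserted equivalence with $(i)$--$(iii)$ follows. I expect the one delicate point to be the face-matching step: verifying that every inner face of $\Gamma(f)$ really does reappear, with unchanged principal part, among the inner faces of $\Gamma(f_m)$, so that INND w.r.t.\ $\Gamma(f_m)$ can be pulled back to $\Gamma(f)$. Everything else is either a citation of Theorems~\ref{thm11}, \ref{thm1} and Proposition~\ref{pro}, or the one-line monomial computation that makes the axis vertices harmless in characteristic zero.
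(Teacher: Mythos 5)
Your proof is correct and follows essentially the same route as the paper's: dispatch $(i)\Leftrightarrow(ii)$ via Theorem~\ref{thm11} and $(iii)\Rightarrow(i)$ via Proposition~\ref{pro}, then prove $(ii)\Rightarrow(iii)$ by noting that characteristic zero makes $f$ ND along the vertices while Corollary~\ref{cor11} shows each edge of $\Gamma(f)$ is an inner edge of $\Gamma(f_m)$ along which IND holds, hence ND (the paper likewise uses ``IND along $\Delta$, and hence also ND along $\Delta$''). Your only cosmetic deviation is handling inner vertices through the IND$\Rightarrow$ND observation rather than through the characteristic-zero computation, and your flagged ``delicate point'' (persistence of faces with unchanged principal parts) is exactly what the paper passes over with ``clearly, it is an inner edge of $\Gamma(f_m)$ for $m$ sufficiently large.''
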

\begin{proof}
The implications $(i)\Rightarrow (ii)$ and $(iii)\Rightarrow (i)$ follow from Theorem \ref{thm11} and Proposition \ref{pro}. It remains to prove $(ii)\Rightarrow (iii)$.
 
Assume that $f$ is INND. Then by Theorem \ref{thm11}, $\mu_N(f)<\infty$. We will show that $f$ is ND along each vertex and each edge of $\Gamma(f)$. Since $char (K)=0$, $f$ is ND along each vertex of $\Gamma(f)$. Let $\Delta$ be an edge of $\Gamma(f)$. Clearly, it is an inner edge of $\Gamma(f_m)$, where $m$ sufficiently large. Since $f$ is INND, by Corollary \ref{cor11} $f$ is INND w.r.t. $\Gamma(f_m)$. Then $f$ is IND along $\Delta$, and hence it is also ND along $\Delta$. This implies $f$ is NND. 
\end{proof}
\begin{corollary}
If $f$ is NND and $\mu_N(f)<\infty$ then $f$ is INND.
\end{corollary}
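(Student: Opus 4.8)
The plan is to observe that this implication follows at once by combining Proposition~\ref{pro} with Theorem~\ref{thm11}; the real content is conceptual, namely that this one direction survives into arbitrary characteristic even though the full equivalence of Corollary~\ref{cor13} does not.

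First I would apply Proposition~\ref{pro}. Since $f$ is NND and we are in the planar case $K[[x,y]]$ with no convenience hypothesis required, Proposition~\ref{pro} yields $\mu_N(f)=\mu(f)$, an equality that is valid in any characteristic. Combining this with the standing hypothesis $\mu_N(f)<\infty$ gives $\mu(f)=\mu_N(f)<\infty$, which is exactly condition (i) of Theorem~\ref{thm11}.

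Next I would invoke the equivalence $(i)\Leftrightarrow(iii)$ of Theorem~\ref{thm11}, whose proof is already established in arbitrary characteristic, to conclude directly that $f$ is INND. This completes the argument.

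There is no genuine technical obstacle in the derivation itself; the subtlety worth stressing is the scope of the statement. Unlike the equivalence $(ii)\Leftrightarrow(iii)$ of Corollary~\ref{cor13}, which requires $\mathrm{char}(K)=0$, the implication here is characteristic-free precisely because Proposition~\ref{pro} and the equivalence $(i)\Leftrightarrow(iii)$ of Theorem~\ref{thm11} both hold for all $K$. I would also note that the converse genuinely fails in positive characteristic: the example $f=x^3+xy+y^3$ in characteristic $3$ is INND (indeed $\mu(f)=\mu_N(f)=1<\infty$) but is not NND, so NND together with $\mu_N(f)<\infty$ is strictly stronger than INND, consistent with only a one-directional statement being available.
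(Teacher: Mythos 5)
Your proof is correct and matches the paper's own argument exactly: the paper proves this corollary by citing Proposition~\ref{pro} together with Theorem~\ref{thm11}, precisely the combination you use (NND gives $\mu(f)=\mu_N(f)$ by Proposition~\ref{pro}, the finiteness hypothesis yields condition (i) of Theorem~\ref{thm11}, and the equivalence $(i)\Leftrightarrow(iii)$ gives INND). Your added remarks on characteristic-independence and the failure of the converse via $f=x^3+xy+y^3$ in characteristic $3$ also agree with the paper's Example 2.1 and the discussion following the corollary.
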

\begin{proof}
This follows from Proposition \ref{pro} and Theorem \ref{thm11}.
\end{proof}
Note that $char(K)=0$ is only used to assure that $f$ is ND along each vertex of $\Gamma(f)\cap (\{0\}\times\Bbb N\cup \Bbb N\times\{0\})$. Hence, the last corollary holds also if $p>0$ and $p\not | n$ if $(0,n)=\Gamma(f)\cap \{0\}\times\Bbb N$ and $p\not | m$ if $(m,0)=\Gamma(f)\cap \Bbb N\times\{0\}$. Example 2.1 shows that this condition is necessary.

\section{$\delta$-Invariant}
We consider now another important invariant of plane curve singularities, the invariant $\delta$ and its combinatorial counterpart, the Newton invariant $\delta_N$. We show that both coincide iff $f$ is weighted homogeneous Newton non-degenerate (WHNND), a new non-degenerate condition introduced below.

Let $f \in\mathfrak m\subset K[[x, y]]$ be a power series. We define {\em the multiplicity} of $f$, denoted by $\mathrm{mt}(f)$, to be the minimal degree of the homogeneous part of $f$. So
$$f=\sum_{k \geq m:=\mathrm{mt}(f)} f_k(x,y),$$
where $f_k$ is homogeneous of degree $k$ and $f_m\neq 0$. Then $f_m$ decomposes into linear factors,
$$f_m=\prod_{i=1}^s (\alpha_i x-\beta_i y)^{r_i},$$
with $(\beta_i:\alpha_i)\in \mathbb P^1$ pairwise distinct. We call $f_m$ {\em the tangent cone} and the points $(\beta_i:\alpha_i), i=1,\ldots,s$, {\em the tangent directions} of $f$.   

We fix a minimal resolution of the singularity computed via successively blowing up points, denote by $Q \to 0$ that $Q$ is an infinitely near point of the origin on $f$. If $Q$ is an infinitely near point in the $n$-th neighbourhood of $0$, we denote by $m_Q$ the multiplicity of the $n$-th strict transform of $f$ at $Q$. If $P$ is  an infinitely near point in the $l$-th neighbourhood of $0$, we denote by $Q\to P$ that $Q$ is also an infinitely near point of $P$ on  the $l$-th strict transform $\tilde f_l$ of $f$ at $P$. Note that if $Q\to P$ then $n\geq l$ and we set $n(\tilde f_l,Q):=n-l$. In particular, we have  $n(f,Q)=n$.

Let $E_1,\ldots, E_k$ be the edges of the Newton diagram of $f$. We denote by $l(E_i)$ the lattice length of $E_i$, i.e. the number of lattice points on $E_i$ minus one and by $s(f_{E_i})$ the number of non-monomial irreducible (reduced) factors of $f_{E_i}$. We set

(a) $\delta(f) :=\sum_{Q\to 0} \frac{m_Q(m_Q-1)}{2}$ the {\em delta invariant} of $f$. The delta invariant $\delta(f)$ equals also $\dim_K (\bar R/R)$ where $R=K[[x,y]]/\langle f \rangle$ and $\bar R$ is the integral closure of of $R$ in its total ring of fractions. 

(b) $\nu(f) :=\sum_{Q\text{ special}} \frac{m_Q(m_Q-1)}{2}$, where an infinitely near point $Q$ is {\em special} if it is the origin or the origin of the corresponding chart of the blowing up.

(c) $r(f)$ the {\em number of branches} of $f$ counted with multiplicity.

(d) If $f$ is convenient, we define
$$\delta_N(f) := V_2(\Gamma_-(f)) -\frac{V_1(\Gamma_-(f))}{2}+\frac{\sum_{i=1}^k l(E_i)}{2},$$
and otherwise we set $\delta_N(f) := \sup\{\delta_N(f^{(m)})| f^{(m)}:=f+x^m+y^m,m \in\Bbb N\}$ and call it the {\em Newton $\delta$-invariant} of $f$. 

(e) $r_N(f):=\sum_{i=1}^k l(E_i)+\max\{j|x^j \text{ divides }f\}+\max\{l|y^l \text{ divides }f\}$.

(f) $s_N(f):=\sum_{i=1}^k s(f_{E_i})+\max\{j|x^j \text{ divides }f\}+\max\{l|y^l \text{ divides }f\}$.

Note that $\delta(f)$ and $r(f)$ are coordinate-independent while all the other ones depend (only) on the Newton diagram of $f$ and hence are coordinate-dependent (for $\nu(f)$ see Proposition \ref{pro21}).
\begin{proposition}\label{pro20}
For $0\neq f\in\langle x,y\rangle$ we have $r(f)\leq r_N(f)$, and if $f$ is WNND then $r(f)= r_N(f)$.
\end{proposition}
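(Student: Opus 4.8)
The plan is to reduce the statement to the case of a single Newton edge via the factorization of Proposition~\ref{pro4}, and then to count branches edge by edge. Write $f=x^a y^b\cdot\bar f_1\cdots\bar f_k$ as in Proposition~\ref{pro4}, where each $\bar f_i$ is convenient with Newton diagram a single edge $E_i$, and where $a,b$ are the largest exponents with $x^a\mid f$ and $y^b\mid f$ (no $\bar f_i$ is divisible by $x$ or $y$ because it is convenient, so the entire monomial factor is $x^a y^b$). Since the number of branches counted with multiplicity is additive over products, and the monomial $x^a y^b$ contributes exactly the $a+b$ branches $\{x=0\}$ and $\{y=0\}$, I get $r(f)=a+b+\sum_{i=1}^k r(\bar f_i)$. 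As $r_N(f)=a+b+\sum_i l(E_i)$, both assertions reduce to showing $r(\bar f_i)\le l(E_i)$ in general, with equality when $f$ (equivalently $\bar f_i$) is WND along $E_i$.

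So I fix a convenient $g=\bar f_i$ whose diagram is one edge, with $x$-intercept $m_i=\mathrm{ord}\,g(x,0)$ and $y$-intercept $n_i=\mathrm{ord}\,g(0,y)$, so that $l(E_i)=q:=\gcd(m_i,n_i)$. Let $g=g_1\cdots g_r$ be the branch factorization. By Lemma~\ref{lm13}(a) each branch has a parametrization with $\mathrm{ord}\,x_j(t)=\alpha_j$ and $\mathrm{ord}\,y_j(t)=\beta_j$, where $\alpha_j:\beta_j=n_i:m_i=n':m'$ (with $m'=m_i/q$, $n'=n_i/q$, $\gcd(m',n')=1$) and $\alpha_1+\cdots+\alpha_r=n_i$. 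Writing $\alpha_j=k_j n'$ and $\beta_j=k_j m'$ with $k_j\ge1$, the relation $\sum_j\alpha_j=n_i=qn'$ gives $\sum_{j=1}^r k_j=q$. Hence $r=r(g)\le q=l(E_i)$, which already proves the inequality $r(f)\le r_N(f)$, and equality holds precisely when every $k_j=1$.

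It remains to show that WND along $E_i$ forces all $k_j=1$. First I transfer the condition to $g$: since $f_{E_i}$ equals a monomial times $g_{\mathrm{in}}$, and a monomial is a nonvanishing unit on the torus, the Tjurina ideal $tj(f_{E_i})$ has a zero in $(K^*)^2$ if and only if $tj(g_{\mathrm{in}})$ does, so $f$ is WND along $E_i$ iff $tj(g_{\mathrm{in}})$ has no torus zero. By Proposition~\ref{pro1}(c) the branch $g_j$ has $(g_j)_{\mathrm{in}}=\xi_j(x^{m'}-\lambda_j y^{n'})^{k_j}$ with $\xi_j,\lambda_j\in K^*$, and multiplying gives $g_{\mathrm{in}}=\prod_j\xi_j(x^{m'}-\lambda_j y^{n'})^{k_j}$. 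If some $k_{j_0}\ge2$, set $h=x^{m'}-\lambda_{j_0}y^{n'}$ and choose a torus point $(a,b)$ with $h(a,b)=0$ (possible since $\lambda_{j_0}\ne0$). Because $h^2$ divides $g_{\mathrm{in}}$, every term of $\partial_x g_{\mathrm{in}}$ and $\partial_y g_{\mathrm{in}}$ retains a factor $h^{k_{j_0}-1}$, so $g_{\mathrm{in}}$, $\partial_x g_{\mathrm{in}}$ and $\partial_y g_{\mathrm{in}}$ all vanish at $(a,b)$; thus $tj(g_{\mathrm{in}})$ has a torus zero and $g$ is not WND along $E_i$. Contrapositively, WND forces all $k_j=1$, hence $r(\bar f_i)=l(E_i)$, and summing over $i$ yields $r(f)=r_N(f)$.

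The step I expect to require the most care is this last one in positive characteristic. One must avoid the Euler relation, which degenerates when $p$ divides the weighted degree, and one must allow $k_{j_0}$ itself to be divisible by $p$, in which case $\partial_x(h^{k})=k\,h^{k-1}h_x$ may vanish identically. The resolution is that the argument never needs the derivative of the repeated factor to be nonzero: it uses only that $h^{k-1}$ (with $k\ge2$) still divides every term of both partials, which is characteristic-free and subsumes the divisible case automatically.
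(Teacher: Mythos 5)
Your proof is correct. Note, though, that the paper itself contains no argument for Proposition~\ref{pro20}: its ``proof'' is just the citation \cite[Lemma 4.10]{BGM10}, so any self-contained argument differs from the paper's text by default. Measured against the paper's toolkit, your route is essentially the one the paper uses for the companion statement $s_N(f)\leq r(f)$ (Proposition~\ref{thm20}): split off the monomial $x^ay^b$, reduce via the factorization of Proposition~\ref{pro4} to a convenient single-edge factor $g=\bar f_i$, and control its branches through Lemma~\ref{lm13} and Proposition~\ref{pro1}. Your counting $\alpha_j=k_jn'$, $\beta_j=k_jm'$, $\sum_j k_j=\gcd(m_i,n_i)=l(E_i)$ is the correct quantitative refinement that Proposition~\ref{thm20} does not need, and it delivers both the inequality and the equality criterion ($r(\bar f_i)=l(E_i)$ iff all $k_j=1$) in one stroke. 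In the equality case you in effect reprove one direction of Proposition~\ref{pro23}: WND along $E_i$ forces $g_{in}=\prod_j\xi_j(x^{m'}-\lambda_jy^{n'})^{k_j}$ to have no repeated non-monomial factor, hence all $k_j=1$; you could have quoted Proposition~\ref{pro23} verbatim (its proof is independent of Proposition~\ref{pro20}, so there is no circularity) and saved the torus-zero computation. That said, your direct argument is sound, and your closing remark identifies the right danger in positive characteristic: any appeal to the Euler relation, or to $\partial(h^k)=kh^{k-1}\partial h$ when $p\mid k$, would fail, and your divisibility argument (with $k\geq 2$, the product rule alone shows $h^{k-1}$ divides both partials) avoids both. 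Two steps deserve one explicit line each: the transfer ``$tj(f_{E_i})$ has a torus zero iff $tj((\bar f_i)_{in})$ does'' requires the product-rule computation $\partial_x(MG)=M_xG+MG_x$ evaluated at a torus zero of $G$, since $M$ being a unit on the torus by itself handles $F=MG$ but not its partials; and the multiplicativity $g_{in}=\prod_j(g_j)_{in}$ uses that each branch's Newton diagram is a segment parallel to $E_i$ (this is implicit in the paper's proof of Lemma~\ref{lm13}, where the weighted-initial form is taken with respect to the common weight $(n',m')$).
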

\begin{proof}
cf. \cite[Lemma 4.10]{BGM10}
\end{proof}
Let $E$ be an edge of the Newton diagram of $f$. Then we can write $f_E$ as follows,
$$f_{E}=monomial\times\prod_{i=1}^{s}(a_ix^{m_0}-b_iy^{n_0})^{r_i},$$ 
where $a_i,b_i\in K^*$, $(a_i:b_i)$ pairwise distinct; $m_0,n_0,r_i\in \Bbb N_{>0}$, $\text{gcd}(m_0,n_0)=1$. It easy to see that 
$$s=s(f_{E})\text{ and } l(E)=\sum_{i=1}^s r_i.$$ 
This implies $s(f_{E})\leq l(E)$ and hence $s_N(f)\leq r_N(f)$.

Let $f=f^{\mathrm{w}}_{d}+f^{\mathrm{w}}_{{d}+1}+\ldots$ with $f^{\mathrm{w}}_{d}\neq 0$ be the $(n_0,m_0)$-weighted homogeneous decomposition of $f$. 
\begin{definition}{\rm
We say that $f$ is {\em weighted homogeneous non-degenerate} (WHND) {\em along $E$} if either $r_i=1$ for all $i=1,\ldots, s$ or $(a_ix^{m_0}-b_iy^{n_0})$ does not divide $f^{\mathrm{w}}_{d+1}$ for each $r_i>1$. 

$f$ is called {\em weighted homogeneous Newton non-degenerate} (WHNND) if its Newton diagram has no edge or if it is WHND along each edge of its Newton diagram.  
}\end{definition}
\begin{lemma}\label{lm20}
Let $f\in K[[x,y]]$. Then $f$ is not WHNND if and only if there exist $a,b\in K^*, m,n\in \Bbb N_{>0}$ with $(m,n)=1$ such that $f^{\mathrm{w}}_{d}$ is divisible by $(ax^m-by^n)^2$ and $f^{\mathrm{w}}_{d+1}$ is divisible by $(ax^m-by^n)$, where $f^{\mathrm{w}}_{d}$ (resp. $f^{\mathrm{w}}_{d+1}$) is the first (resp. the second) term of the $(n,m)$-weighted decomposition of $f$.
\end{lemma}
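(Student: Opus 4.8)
The plan is to read this lemma as nothing more than an unpacking of the definition of WHNND into a single divisibility statement, and to prove the two implications directly. Two facts will carry the argument. First, the leading term $f^{\mathrm{w}}_d$ of the $(n,m)$-weighted decomposition of $f$ is precisely the initial form $f_E=\mathrm{in}_E(f)$ along the edge $E$ of $\Gamma(f)$ on which the linear form $ni+mj$ is minimized---and it is a single monomial when that minimum is attained at only one point of $\mathrm{supp}(f)$. Second, for $a,b\in K^*$ with $\gcd(m,n)=1$ the binomial $ax^m-by^n$ is irreducible in $K[x,y]$ and coprime to both $x$ and $y$. Granting these, the whole statement reduces to matching the edge factorization $f_E=\text{monomial}\times\prod_i(a_ix^{m_0}-b_iy^{n_0})^{r_i}$ (with $\gcd(m_0,n_0)=1$) against the condition $(ax^m-by^n)^2\mid f^{\mathrm{w}}_d$ and $(ax^m-by^n)\mid f^{\mathrm{w}}_{d+1}$.

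First I would negate the definition. By construction $f$ fails to be WHNND exactly when $\Gamma(f)$ has an edge and $f$ fails to be WHND along some edge $E$; unwinding the WHND condition (the clause ``$r_i=1$ for all $i$'' being subsumed), this means there is an index $i$ with $r_i\ge 2$ for which $(a_ix^{m_0}-b_iy^{n_0})\mid f^{\mathrm{w}}_{d+1}$, the decomposition being taken with respect to the primitive normal $(n_0,m_0)$ of $E$. For the forward implication I would then simply put $(n,m):=(n_0,m_0)$ and $(a,b):=(a_i,b_i)$: since $(ax^m-by^n)^{r_i}$ divides $f^{\mathrm{w}}_d=f_E$ with $r_i\ge2$ we get $(ax^m-by^n)^2\mid f^{\mathrm{w}}_d$, while $(ax^m-by^n)\mid f^{\mathrm{w}}_{d+1}$ is exactly the hypothesis. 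So the stated condition holds.

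For the converse I would start from $a,b\in K^*$ and $\gcd(m,n)=1$ with $(ax^m-by^n)^2\mid f^{\mathrm{w}}_d$ and $(ax^m-by^n)\mid f^{\mathrm{w}}_{d+1}$. Because $ax^m-by^n$ is not a monomial, $f^{\mathrm{w}}_d$ must be supported on at least two monomials, so the minimum of $ni+mj$ on $\mathrm{supp}(f)$ is attained along an edge $E$ of $\Gamma(f)$ and $f^{\mathrm{w}}_d=f_E$. Since $(n,m)$ is then a primitive positive integer normal to $E$, it coincides with the primitive normal $(n_0,m_0)$, so the two weighted decompositions agree. Now $ax^m-by^n$ is irreducible and coprime to the monomial factor of $f_E$, so unique factorization in $K[x,y]$ forces it to be an associate of some $a_ix^{m_0}-b_iy^{n_0}$; as its square divides $f_E$, that factor has $r_i\ge 2$. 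Since this same binomial divides $f^{\mathrm{w}}_{d+1}$, $f$ fails WHND along $E$ and hence is not WHNND. The degenerate case where $\Gamma(f)$ has no edge is consistent on both sides: there $f$ is WHNND by definition, while no weight can make $f^{\mathrm{w}}_d$ non-monomial, so the divisibility condition is unsatisfiable.

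The only genuinely non-formal ingredient, and the one step I expect to need care, is the irreducibility of $ax^m-by^n$ for coprime exponents over an arbitrary algebraically closed field. I would establish it by viewing the binomial in $K(y)[x]$ and applying the classical criterion for $x^m-\alpha$ with $\alpha=(b/a)y^n$, whose hypotheses follow from $\gcd(m,n)=1$ because the $y$-valuation $n$ of $\alpha$ is then coprime to every prime dividing $m$; alternatively one may exhibit the primitive parametrization $t\mapsto(c_1t^n,c_2t^m)$, which is generically injective precisely when $\gcd(m,n)=1$ (cf. Proposition \ref{pro1}). Once this is in place, the remaining links---identifying $f^{\mathrm{w}}_d$ with $f_E$, matching the primitive normals, and reading off the multiplicity $r_i$ from unique factorization---are routine.
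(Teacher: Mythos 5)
Your proposal is correct and matches the paper's intent exactly: the paper's own proof of Lemma \ref{lm20} is just ``Straightforward from the above definition,'' and your argument is precisely that unpacking, carried out in full. The details you supply beyond the paper---identifying $f^{\mathrm{w}}_d$ with $f_E$ for the edge selected by the weight $(n,m)$, the irreducibility of $ax^m-by^n$ for $\gcd(m,n)=1$ (via Gauss's lemma and the $y$-adic valuation, or via the parametrization), and the vacuous no-edge case---are all sound and are exactly the routine steps the authors chose to omit.
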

\begin{proof}
Straightforward from the above definition.
\end{proof}
\begin{remark}{\rm
(a) In \cite{Lu87} the author introduced superisolated singularities to study the $\mu$-constant stratum. We recall that $f\in K[[x,y]]$ is {\em superisolated} if it becomes regular after only one blowing up. By (\cite[Lemma 1]{Lu87}), this is equivalent to: $f_{m+1}(\beta_i,\alpha_i)\neq 0$ for all tangent directions $(\beta_i:\alpha_i)$ of $f$ with $r_i>1$, where $f=f_m+f_{m+1}+\ldots$ is the homogeneous decomposition of $f$ and 
$$f_m=\prod_{i=1}^s (\alpha_i x-\beta_i y)^{r_i}.$$
Note that this condition concerns all factors of $f_m$ including monomials. For WHNND singularities we require a similar condition, but for "all weights" and without any condition on the monomial factors of the first term of the weigted homogeneous decomposition of $f$.

(b) Since a plane curve singularity is superisolated iff it becomes regular after only one blowing up, we have $\delta(f)=\nu(f)=m(m-1)/2$ and hence $\delta(f)=\delta_N(f)=m(m-1)/2$, by Proposition \ref{pro21}. It follows from Theorem \ref{thm21} that 

(c) A superisolated plane curve singularity is WHNND.

(d) The plane curve singularity $x^2+y^5$ is WHNND but not superisolated.
}\end{remark}
\begin{proposition}\label{pro23} With notations as above, $f$ is WND along $E$ if and only if $s(f_E)=l(E)$ or, equivalently, iff $r_{i}=1$ for all $i=1,\ldots,s$. In particular, WNND implies WHNND.
\end{proposition}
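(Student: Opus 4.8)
The plan is to reduce the whole statement to a computation of partial derivatives at the torus zeros of the non-monomial part of $f_E$. Writing $f_E=M\cdot g$ with $M$ the monomial factor and $g=\prod_{i=1}^s h_i^{r_i}$, where $h_i:=a_ix^{m_0}-b_iy^{n_0}$, the first step is to observe that $M$ is nonvanishing on $(K^*)^2$. Hence, by the product rule, at any torus point one has $f_E=(f_E)_x=(f_E)_y=0$ if and only if $g=g_x=g_y=0$. Since $f$ is WND along $E$ precisely when $tj(f_E)=\langle f_E,(f_E)_x,(f_E)_y\rangle$ has no zero in $(K^*)^2$, this reduces the problem to deciding whether $g,g_x,g_y$ have a common zero on the torus.

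The second step is to locate and classify the torus zeros of $g$. A point $(x_0,y_0)\in(K^*)^2$ lies on $h_i$ iff $a_i/b_i=y_0^{n_0}/x_0^{m_0}$, and since the ratios $(a_i:b_i)$ are pairwise distinct, every torus zero of $g$ lies on exactly one factor $h_{i_0}$, with the remaining factors units near that point. Writing $g=h_{i_0}^{r_{i_0}}u$ with $u(x_0,y_0)\neq0$ and applying the product rule at the point, I would show: if $r_{i_0}\geq2$ then $g_x=g_y=0$, whereas if $r_{i_0}=1$ then $g_x$ and $g_y$ equal $(h_{i_0})_x$ and $(h_{i_0})_y$ up to the nonzero factor $u(x_0,y_0)$. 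Because $(h_{i_0})_x=a_{i_0}m_0x^{m_0-1}$ and $(h_{i_0})_y=-b_{i_0}n_0y^{n_0-1}$ vanish on the torus only when $p\mid m_0$ resp. $p\mid n_0$, and $\gcd(m_0,n_0)=1$ rules out both, at least one of $g_x,g_y$ is nonzero at the point whenever $r_{i_0}=1$.

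Combining the two steps gives the equivalence. If some $r_i\geq2$, choose any $y_0\in K^*$ and an $m_0$-th root $x_0$ of $b_iy_0^{n_0}/a_i$ (possible as $K$ is algebraically closed); then $(x_0,y_0)$ is a common torus zero of $g,g_x,g_y$, so $f$ is not WND along $E$. If instead all $r_i=1$, the computation shows no torus zero of $g$ is a common zero of $g_x,g_y$, so $tj(f_E)$ has no torus zero and $f$ is WND along $E$. The reformulation via $s(f_E)=l(E)$ is then immediate, since $l(E)=\sum_{i=1}^s r_i\geq s=s(f_E)$ with equality exactly when every $r_i=1$. Finally, in the planar case the top-dimensional faces of $\Gamma(f)$ are precisely its edges, so WNND means $f$ is WND along each edge; by the equivalence this forces $r_i=1$ for all factors on every edge, which is the first alternative in the definition of WHND, and therefore $f$ is WHNND.

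I expect the main obstacle to be the positive-characteristic part of the derivative computation: one must verify that on the torus the two partials $(h_{i_0})_x$ and $(h_{i_0})_y$ cannot vanish simultaneously. This is exactly the place where characteristic $p$ could destroy weak non-degeneracy, and it is saved only by the coprimality $\gcd(m_0,n_0)=1$, which prevents $p$ from dividing both exponents; this point deserves the most care.
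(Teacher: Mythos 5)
Your proof is correct and takes essentially the same route as the paper: both factor $f_E$ as a monomial times $\prod_{i=1}^{s}(a_ix^{m_0}-b_iy^{n_0})^{r_i}$, observe that any torus zero of $f_E$ lies on exactly one factor $h_{i_0}$, and decide weak (non-)degeneracy by the product rule, with the characteristic-$p$ danger (both partials of $h_{i_0}$ vanishing, forcing $p\mid m_0$ and $p\mid n_0$) killed exactly by $\gcd(m_0,n_0)=1$. Your explicit construction of a torus zero when some $r_i\geq 2$ merely fills in a step the paper dismisses as ``easy to see,'' and your treatment of the $s(f_E)=l(E)$ reformulation and of WNND $\Rightarrow$ WHNND matches the paper's.
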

\begin{proof}
Firstly we can see that the equation $s(f_E)=l(E)$ is equivalent to $r_{i}=1$ for all $i=1,\ldots,s$ since $s(f_E)=s$ and $l(E)=r_1+\ldots+r_s$. It remains to prove that $f$ is WND along $E$ iff $r_{i}=1$ for all $i=1,\ldots,s$. Assume that there is an $i_0$ s.t. $r_{i_0}>1$. It is easy to see that $f_E, \dfrac{\partial  f_{E}}{\partial x}, \dfrac{\partial  f_{E}}{\partial y}$ are divisible by ($a_{i_0}x_0^{m_0}-b_{i_0}y_0^{n_0}$). Hence $f$ is weakly degenerate (WD) along $E$.

We now assume that $f$ is weakly degenerate (WD) along $E$. Then there exist $x_0,y_0\in K^*$ such that
$$f_{E}(x_0,y_0)=\frac{\partial  f_{E}}{\partial x}(x_0,y_0)=\frac{f_{E}}{\partial y}(x_0,y_0)=0,$$
and hence there exists an index $i_0$ such that $a_{i_0}x_0^{m_0}-b_{i_0}y_0^{n_0}=0$. We will show that $r_{i_0}>1$. In fact, if this is not true then $f_E(x,y)=(a_{i_0}x^{m_0}-b_{i_0}y^{n_0})\cdot h(x,y)$ with $h(x_0,y_0)\neq 0$. Since
$$\frac{\partial  f_{E}}{\partial x}(x_0,y_0)=\frac{f_{E}}{\partial y}(x_0,y_0)=0,$$
this is impossible if $p=0$ and implies that $p$ divides $m_0$ and $n_0$ if $p>0$. This contradicts the assumption $\text{gcd}(m_0,n_0)=1$.
\end{proof}
Let $f\in K[[x,y]]$ and let $E_i, i=1,\ldots, k$ be the edges of its Newton diagram. Then by Proposition \ref{pro4} there is a factorization of $f$,
$$f=monomial\cdot \bar f_1\cdot\ldots\cdot \bar f_k,$$
such that $\bar f_i$ is convenient and $f_{E_i}=monomial\times (\bar f_i)_{in}$. Note that $\bar f_i$ is in general not irreducible. On the other hand, $f$ can be factorized into its irreducible factors as $f=m_1\cdot\ldots\cdot m_l\cdot f_1\cdot \ldots\cdot f_r$, where $m_j$ are monomials, and $f_j$ are convenient.
\begin{proposition}\label{pro26}
\begin{itemize}
\item[(a)] Let $g,h\in K[[x,y]]$ such that $f=g\cdot h$. If $f$ is WHNND then $g$ and $h$ are also WHNND.
\item[(b)] With the above notations, the following are equivalent:
\begin{itemize}
\item[(i)] $f$ is WHNND.
\item[(ii)] $\bar f_1,\ldots, \bar f_k$ are WHNND.
\item[(iii)] $f_1,\ldots, f_r$ are WHNND and $(f_i)_{in}$ are pairwise coprime.
\end{itemize}
\end{itemize}
\end{proposition}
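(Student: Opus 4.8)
The plan is to reduce both parts to the criterion of Lemma~\ref{lm20} via one elementary product formula. For a finite product $f=\prod_\alpha g_\alpha$ and fixed coprime weights $(n,m)$, writing each factor as $g_\alpha=(g_\alpha)^{\mathrm{w}}_{e_\alpha}+(g_\alpha)^{\mathrm{w}}_{e_\alpha+1}+\cdots$, the two lowest $(n,m)$-weighted parts of $f$ are
$$f^{\mathrm{w}}_{d}=\prod_\alpha (g_\alpha)^{\mathrm{w}}_{e_\alpha},\qquad f^{\mathrm{w}}_{d+1}=\sum_\beta (g_\beta)^{\mathrm{w}}_{e_\beta+1}\prod_{\alpha\neq\beta}(g_\alpha)^{\mathrm{w}}_{e_\alpha},\quad d=\sum_\alpha e_\alpha.$$
I also record that, for a repeated factor $L=ax^{m}-by^{n}$ $(a,b\in K^{*})$ of $f^{\mathrm{w}}_{d}$, divisibility of $f^{\mathrm{w}}_{d+1}$ by $L$ is unchanged when a factor is multiplied by a unit $u=u^{\mathrm{w}}_{0}+u^{\mathrm{w}}_{1}+\cdots$: this only scales $f^{\mathrm{w}}_{d}$ by $u^{\mathrm{w}}_{0}\in K^{*}$ and adds $u^{\mathrm{w}}_{1}f^{\mathrm{w}}_{d}$ to $f^{\mathrm{w}}_{d+1}$, and $L\mid f^{\mathrm{w}}_{d}$. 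This unit-invariance lets me ignore the monomial prefactors of Proposition~\ref{pro4} throughout.

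For part (a) I argue by contraposition. If $g$ is not WHNND, Lemma~\ref{lm20} produces coprime $(m,n)$ and $L=ax^m-by^n$ with $L^2\mid g^{\mathrm{w}}_{d_g}$ and $L\mid g^{\mathrm{w}}_{d_g+1}$. Applying the product formula to $f=g\cdot h$ gives $L^{2}\mid g^{\mathrm{w}}_{d_g}h^{\mathrm{w}}_{d_h}=f^{\mathrm{w}}_{d}$ and $L\mid g^{\mathrm{w}}_{d_g}h^{\mathrm{w}}_{d_h+1}+g^{\mathrm{w}}_{d_g+1}h^{\mathrm{w}}_{d_h}=f^{\mathrm{w}}_{d+1}$, so $f$ is not WHNND by Lemma~\ref{lm20}. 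By symmetry the same holds for $h$.

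For (i)$\Leftrightarrow$(ii) I use that, via $f_{E_i}=\text{monomial}\times(\bar f_i)_{\mathrm{in}}$, the edges of $\Gamma(f)$ are exactly the edges of the $\bar f_i$. Fix $E_i$ with primitive direction $(n_0,m_0)$. In the $(n_0,m_0)$-weighting the leading part of $\bar f_i$ is the whole initial form $(\bar f_i)_{\mathrm{in}}$, while for $s\neq i$ the edge $E_s$ has a different slope, so the $(n_0,m_0)$-minimum of $\bar f_s$ is attained at a single vertex and its leading part is a monomial. Feeding $f=\text{monomial}\cdot\prod_s\bar f_s$ into the product formula, every summand of $f^{\mathrm{w}}_{d+1}$ indexed by $s\neq i$ carries the factor $(\bar f_i)_{\mathrm{in}}$, hence is divisible by any repeated factor $L$ of $f^{\mathrm{w}}_{d}$; only the $\bar f_i$-summand, a monomial times $(\bar f_i)^{\mathrm{w}}_{d_i+1}$, survives modulo $L$. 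Hence $L\mid f^{\mathrm{w}}_{d+1}$ iff $L\mid(\bar f_i)^{\mathrm{w}}_{d_i+1}$; letting $L$ range over the repeated factors of $f^{\mathrm{w}}_{d}$ (equivalently of $(\bar f_i)_{\mathrm{in}}$) shows that $f$ is WHND along $E_i$ iff $\bar f_i$ is WHND along its single edge, which is the equivalence (and (i)$\Rightarrow$(ii) also follows at once from part (a)).

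Finally (ii)$\Leftrightarrow$(iii). First I group the branches by slope: the $f_j$ whose single edge is parallel to $E_i$ form a set $S_i$, and by unique factorization $\bar f_i=\prod_{j\in S_i}f_j$ up to a unit, which is harmless by the unit-invariance above. By Proposition~\ref{pro1}(c) each $(f_j)_{\mathrm{in}}=\xi_j(x^{m_0}-\lambda_j y^{n_0})^{q_j}$, so two initial forms of different slope are automatically coprime; hence overall pairwise coprimality of the $(f_j)_{\mathrm{in}}$ is equivalent to the $\lambda_j$ being pairwise distinct inside each $S_i$. It then suffices to prove, for fixed $i$, that $\bar f_i$ is WHNND iff every $f_j$ $(j\in S_i)$ is WHNND and the $\lambda_j$ are distinct. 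Writing $(\bar f_i)_{\mathrm{in}}=\text{unit}\cdot\prod_\ell(x^{m_0}-\mu_\ell y^{n_0})^{R_\ell}$ and computing $(\bar f_i)^{\mathrm{w}}_{d_i+1}$ by the product formula, I treat each repeated factor $L_\ell$ $(R_\ell>1)$ in two cases: if two distinct $f_j$ share the tangent $\mu_\ell$, then $L_\ell$ persists in every summand, so $L_\ell\mid(\bar f_i)^{\mathrm{w}}_{d_i+1}$ and $\bar f_i$ is not WHND; if a single $f_{j^{*}}$ carries $\mu_\ell$, necessarily with multiplicity $q_{j^{*}}=R_\ell>1$, then modulo $L_\ell$ only the $j^{*}$-summand survives, so $L_\ell\mid(\bar f_i)^{\mathrm{w}}_{d_i+1}$ iff $L_\ell\mid(f_{j^{*}})^{\mathrm{w}}_{d_{j^{*}}+1}$, i.e. iff $f_{j^{*}}$ is not WHND along its edge. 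Assembling the two cases yields the equivalence. I expect this last step to be the main obstacle: one must cleanly separate the two ways a tangent direction can repeat---across two distinct branches (a failure of the coprimality clause) versus inside a single branch (a genuine failure of WHND for that branch)---and verify that the product formula isolates precisely the relevant branch modulo $L_\ell$.
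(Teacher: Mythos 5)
Your proposal is correct and runs on the same engine as the paper's own proof: the criterion of Lemma~\ref{lm20} combined with the product formula for the two lowest $(n,m)$-weighted parts, applied to the factorization of Proposition~\ref{pro4} and the factorization into branches (with Proposition~\ref{pro1}(c) controlling the initial forms); your part (a) is essentially verbatim the paper's argument. The only difference is organizational---you establish the per-edge biconditionals (i)$\Leftrightarrow$(ii) and (ii)$\Leftrightarrow$(iii) directly, whereas the paper closes the cycle (i)$\Rightarrow$(ii)$\Rightarrow$(iii)$\Rightarrow$(i) via two contradiction arguments (coprimality failure being handled there through the auxiliary product $g=f_1\cdot f_2$ inside a single $\bar f_{j}$), and your two-case analysis over repeated factors is precisely the union of those two arguments, so this is a repackaging rather than a genuinely different route.
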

\begin{proof}
(a) It suffice to show that if $g$ is not WHNND then neither is $f$. In fact, since $g$ is not WHNND, by Lemma \ref{lm20}, there exist $a,b\in K^*, m,n\in \Bbb N_{>0}$ with $(m,n)=1$ such that $g^{\mathrm{w}}_{c}$ is divisible by $(ax^m-by^n)^2$ and $g^{\mathrm{w}}_{c+1}$ is divisible by $(ax^m-by^n)$, where $g^{\mathrm{w}}_{c}$ (resp. $g^{\mathrm{w}}_{c+1}$) is the first (resp. the second) term of the $(n,m)$-weighted decomposition of $g$. Let $f=f^{\mathrm{w}}_{d}+f^{\mathrm{w}}_{d+1}+\ldots$ (resp. $h=h^{\mathrm{w}}_{e}+h^{\mathrm{w}}_{e+1}+\ldots$) be the $(n,m)$-weighted homogeneous decomposition of $f$ (resp. $h$). Then 
$$f^{\mathrm{w}}_{d}=g^{\mathrm{w}}_{c}\cdot h^{\mathrm{w}}_{e}\text{ and }f^{\mathrm{w}}_{d+1}=g^{\mathrm{w}}_{c}\cdot h^{\mathrm{w}}_{e+1}+g^{\mathrm{w}}_{c+1}\cdot h^{\mathrm{w}}_{e}.$$
This implies that $f^{\mathrm{w}}_{d}$ is divisible by $(ax^m-by^n)^2$ and $f^{\mathrm{w}}_{d+1}$ is divisible by $(ax^m-by^n)$. Again by Lemma \ref{lm20}, $f$ is not WHNND.
\\
(b) It is easily verified that we may restrict to the case that $f$ is convenient. \\
The implication (i)$\Rightarrow$ (ii) follows from part (a).

(ii)$\Rightarrow$ (iii): Assume that $\bar f_1,\ldots, \bar f_k$ are WHNND. By part (a) we can deduce that $f_1,\ldots, f_r$ are WHNND since for each $i$, $f_i$ is an irreducible factor of some $\bar f_j$. We now show that the $(f_i)_{in}$ are pairwise coprime. By contradiction, suppose that $(f_1)_{in}$ and $(f_2)_{in}$ are not coprime. It follows from Proposition \ref{pro1} that there exist $a,b\in K^*, m,n\in \Bbb N_{>0}$ with $(m,n)=1$ such that $(ax^m-by^n)$ is the unique irreducible factor of $(f_1)_{in}$ and $(f_2)_{in}$. Consequently, $(f_1)_{in}$ and $(f_2)_{in}$ are both $(n,m)$-weighted homogeneous. Assume that $f_1$ resp. $f_2$ is an irreducible factor of $\bar f_{j_1}$ resp. $\bar f_{j_2}$ for some $j_1$ and $j_2$. Since $(\bar f_{j_1})_{in}$ and $(\bar f_{j_2})_{in}$ are weighted homogeneous, $(f_1)_{in}$ resp. $(f_2)_{in}$ is a factor of $(\bar f_{j_1})_{in}$ resp. $(\bar f_{j_2})_{in}$. This implies that $(\bar f_{j_1})_{in}$ and $(\bar f_{j_2})_{in}$ and therefore $f_{E_{j_1}}$ and $f_{E_{j_2}}$ are all $(n,m)$-weighted homogeneous. Then the edge $E_{j_1}$ must coincide the edge $E_{j_2}$ and hence $\bar f_{j_1}=\bar f_{j_2}$. It yields that the product $g:=f_1\cdot f_2$ is a factor of $\bar f_{j_1}$. Now, we decompose $g,f_1,f_2$ into their $(n,m)$-weighted homogeneous terms as follows:
$$g=g^{\mathrm{w}}_{c}+g^{\mathrm{w}}_{c+1}+\ldots, f_1=(f_1)^{\mathrm{w}}_{d_1}+(f_1)^{\mathrm{w}}_{d_1+1}+\ldots, f_2=(f_2)^{\mathrm{w}}_{d_2}+(f_2)^{\mathrm{w}}_{d_2+1}+\ldots.$$
Then $c=d_1+d_2$, $(f_1)^{\mathrm{w}}_{d_1}=(f_1)_{in}$, $(f_2)^{\mathrm{w}}_{d_2}=(f_2)_{in}$, $g^{\mathrm{w}}_{c}=(f_1)^{\mathrm{w}}_{d_1}\cdot (f_2)^{\mathrm{w}}_{d_2}$ and $g^{\mathrm{w}}_{c+1}=(f_1)^{\mathrm{w}}_{d_1}\cdot (f_2)^{\mathrm{w}}_{d_2+1}+(f_1)^{\mathrm{w}}_{d_1+1}\cdot (f_2)^{\mathrm{w}}_{d_2}.$
This implies that $g^{\mathrm{w}}_{c}$ is divisible by $(ax^m-by^n)^2$ and $g^{\mathrm{w}}_{c+1}$ is divisible by $(ax^m-by^n)$. It follows from Lemma \ref{lm20} that $g$ is not WHNND and hence $\bar f_{j_1}$ is also not WHNND by part (a) with $g$ a factor of $\bar f_{j_1}$, which is a contradiction. 

(iii)$\Rightarrow$ (i): Suppose that $f$ is not WHNND and that the $(f_i)_{in}$ are pairwise coprime. We will show that $f_i$ is not WHNND for some $i$. Indeed, since $f$ is not WHNND, by Lemma \ref{lm20}, there exist $a,b\in K^*, m,n\in \Bbb N_{>0}$ with $(m,n)=1$ such that $f^{\mathrm{w}}_{d}$ is divisible by $(ax^m-by^n)^2$ and $f^{\mathrm{w}}_{d+1}$ is divisible by $(ax^m-by^n)$, where $f^{\mathrm{w}}_{d}$ (resp. $f^{\mathrm{w}}_{d+1}$) is the first (resp. the second) term of the $(n,m)$-weighted decomposition of $f$. Let $(f_i)=(f_i)^{\mathrm{w}}_{d_i}+(f_i)^{\mathrm{w}}_{d_i+1}+\ldots$ be the $(n,m)$-weighted homogeneous decomposition of $f_i, i=1,\ldots,r$. Then we have
$$d=\sum_{i=1}^r d_i;\quad (f_i)^{\mathrm{w}}_{d_i}=(f_i)_{in};\quad f^{\mathrm{w}}_{d}=\prod_{i=1}^r (f_i)^{\mathrm{w}}_{d_i};\quad f^{\mathrm{w}}_{d+1}=\sum_{i=1}^r \Big((f_i)^{\mathrm{w}}_{d_i+1}\cdot \prod_{l\neq i} (f_l)^{\mathrm{w}}_{d_l}\Big).$$
Since 
$f^{\mathrm{w}}_{d}=\prod_{i=1}^r (f_i)^{\mathrm{w}}_{d_i}$ and since the $(f_i)^{\mathrm{w}}_{d_i}$ are pairwise coprime, there exists an $i_0$ such that 
$(f_{i_0})^{\mathrm{w}}_{d_{i_0}}$ is divisible by $(ax^m-by^n)^2$ and $(f_l)^{\mathrm{w}}_{d_l}$ is not divisible by $(ax^m-by^n)$ for all $l\neq i_0$. This implies that $(f_{i_0})^{\mathrm{w}}_{d_{i_0}+1}$ is divisible by $(ax^m-by^n)$ since
$$f^{\mathrm{w}}_{d+1}=(f_{i_0})^{\mathrm{w}}_{d_{i_0}+1}\cdot \prod_{l\neq i_0} (f_l)^{\mathrm{w}}_{d_l} +\sum_{i\neq i_0}\Big((f_i)^{\mathrm{w}}_{d_i+1}\cdot \prod_{l\neq i} (f_l)^{\mathrm{w}}_{d_l}\Big).$$
Then $f_{i_0}$ is not WHNND by Lemma \ref{lm20}.
\end{proof}
\begin{proposition}\label{thm20}
For $0\neq f\in\langle x,y\rangle$ we have $s_N(f)\leq r(f)$ and if $f$ is WHNND then $s_N(f)=r(f)$.
\end{proposition}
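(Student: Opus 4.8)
The plan is to reduce the statement to a single edge of the Newton diagram, set up on that edge a correspondence between the distinct non-monomial factors of the edge form and the branches of $f$, and then read off the splitting behaviour of a degenerate factor from a weighted blow-up. First I would strip off the monomial factors, writing $f=x^{j_0}y^{l_0}\cdot\tilde f$ with $\tilde f$ convenient, $j_0=\max\{j\mid x^j\text{ divides }f\}$ and $l_0=\max\{l\mid y^l\text{ divides }f\}$. The factor $x^{j_0}y^{l_0}$ contributes the two coordinate axes, with multiplicities $j_0$ and $l_0$, to $r(f)$, and exactly the two summands $j_0$ and $l_0$ to $s_N(f)$, while creating no new non-monomial edge factor; so it suffices to treat convenient $f$. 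For convenient $f$, Proposition \ref{pro4} gives $f=\bar f_1\cdots\bar f_k$ with each $\bar f_i$ convenient, supported on a single edge $E_i$, and $f_{E_i}=\text{monomial}\times(\bar f_i)_{in}$, whence $s(f_{E_i})=s((\bar f_i)_{in})$. Since $r(f)=\sum_i r(\bar f_i)$ and $s_N(f)=\sum_i s((\bar f_i)_{in})$, and since by Proposition \ref{pro26}(b) $f$ is WHNND iff every $\bar f_i$ is, I am reduced to proving, for a convenient $g$ with a single edge $E$: $s(g_{in})\le r(g)$, with equality whenever $g$ is WHND along $E$.

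\emph{Branch--factor correspondence.} I would write $g_{in}=\prod_{i=1}^{s}\phi_i^{r_i}$ with $\phi_i=a_ix^{m_0}-b_iy^{n_0}$ pairwise distinct, $\gcd(m_0,n_0)=1$ and $\sum_i r_i=l(E)$; each $\phi_i$ is then irreducible. Using $f=gh\Rightarrow f_{in}=g_{in}h_{in}$ I get $g_{in}=\prod_B (B_{in})^{\mathrm{mult}_g B}$ over the branches $B$ of $g$. By Proposition \ref{pro1}(c) each $B_{in}$ is a scalar times a power of an irreducible binomial $x^{m_B/q_B}-\lambda_B y^{n_B/q_B}$ (with $q_B=\gcd(m_B,n_B)$); since it divides $g_{in}$ this binomial must be one of the $\phi_i$, so every branch is tangent to exactly one $\phi_i$, and Lemma \ref{lm13}(b) shows each $\phi_i$ carries at least one branch. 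Setting $r^{(i)}:=\sum_{B\to\phi_i}\mathrm{mult}_g(B)$, so that $r(g)=\sum_i r^{(i)}$ and $r_i=\sum_{B\to\phi_i}\mathrm{mult}_g(B)\,q_B$, this yields $s=s(g_{in})\le r(g)$ at once, with equality if and only if $r^{(i)}=1$ for every $i$.

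\emph{The splitting criterion (main step).} It remains to show that $g$ WHND along $E$ forces every $r^{(i)}=1$. If $r_i=1$ then $\sum_{B\to\phi_i}\mathrm{mult}_g(B)\,q_B=1$ already forces $r^{(i)}=1$, so I may assume $r_i\ge 2$, in which case WHND means $\phi_i\nmid g^{\mathrm w}_{d+1}$. After a torus rescaling I normalise $\phi_i=x^{m_0}-y^{n_0}$. The key device is a monomial (toric) substitution $x=u^{n_0}s^{c}$, $y=u^{m_0}s^{d}$ with integers $c,d$ satisfying $n_0d-m_0c=1$ (possible since $\gcd(m_0,n_0)=1$); under it each weighted homogeneous piece satisfies $g^{\mathrm w}_l(u^{n_0}s^c,u^{m_0}s^d)=u^{l}g^{\mathrm w}_l(s^c,s^d)$, so $g=u^{d}\hat g(u,s)$ with $\hat g(u,s)=g^{\mathrm w}_d(s^c,s^d)+u\,g^{\mathrm w}_{d+1}(s^c,s^d)+\cdots$. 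Here $\hat g(0,s)=\prod_j\phi_j(s^c,s^d)^{r_j}$ has a zero of order exactly $r_i$ at a simple point $s_i$ (because $m_0c-n_0d=-1$ makes each $\phi_j(s^c,s^d)$ vanish to first order at a single nonzero $s_j$, with distinct $j$ giving distinct $s_j$), while the coefficient of $u$ at $s=s_i$ is $g^{\mathrm w}_{d+1}(s_i^c,s_i^d)$, which is nonzero precisely because $\phi_i\nmid g^{\mathrm w}_{d+1}$. Writing $w=s-s_i$, near $(0,s_i)$ one then has $\hat g=c_0w^{r_i}+e\,u+(\text{higher order})$ with $c_0,e\neq0$; as $r_i\ge 2$ the linear term $e\,u$ dominates, so $\hat g$ is smooth at $(0,s_i)$. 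A smooth point of the strict transform corresponds to a single reduced branch of $g$ tangent to $\phi_i$, i.e. $r^{(i)}=1$.

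Assembling the pieces, $g$ WHND along $E$ gives $s(g_{in})=r(g)$, and combined with the reduction of the first paragraph and Proposition \ref{pro26}(b) this yields $s_N(f)=r(f)$ when $f$ is WHNND; the inequality $s_N(f)\le r(f)$ holds unconditionally. The hard part is the third step: checking that the chosen monomial map gives a genuine smooth chart (arranging $c,d\ge 0$ by adding suitable multiples), that the claimed $u$-orders and the identification of the $\phi_j$ with simple roots $s_j$ hold, and that branches upstairs correspond bijectively, with multiplicity, to the branches of $g$ through the origin tangent to $\phi_i$. This is exactly the ``all weights'' analogue of the superisolated criterion recalled in the Remark, with the weight-$(n_0,m_0)$ blow-up playing the role of the ordinary blow-up and $g^{\mathrm w}_{d+1}$ the role of $f_{m+1}$.
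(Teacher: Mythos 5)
Your proposal is correct, and its skeleton (stripping the monomial factors, reducing via Proposition \ref{pro4} and the multiplicativity of WHNND in Proposition \ref{pro26}(b) to a convenient single-edge $g$, and deducing $s(g_{in})\le r(g)$ from Proposition \ref{pro1}(c) and the factorization $g_{in}=\prod_B (B_{in})^{\mathrm{mult}_g B}$) is exactly the paper's. Where you genuinely diverge is the equality step. The paper disposes of it in two lines: by Proposition \ref{pro26}(b), WHNND forces the initial forms of the branches to be pairwise coprime, so the $r$ branches (counted with multiplicity) biject with the $s$ distinct binomials $\phi_i$; all the weight is carried by that coprimality statement, proved earlier by a purely algebraic divisibility argument. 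You instead prove $r^{(i)}=1$ directly, via the monomial chart $x=u^{n_0}s^c$, $y=u^{m_0}s^d$ with $n_0d-m_0c=1$, by showing $\hat g_u(0,s_i)=g^{\mathrm{w}}_{d+1}(s_i^c,s_i^d)\neq 0$, i.e.\ the strict transform is smooth at the point corresponding to $\phi_i$. This is in substance the content of the paper's Lemmas \ref{lm21} and \ref{lm22}, which establish precisely $m_{Q_{E,i}}=1$ under WHND by induction on $m_0+n_0$ through successive point blow-ups; you compress that induction into a single $SL_2(\mathbb{Z})$ toric chart, and you use it for this proposition, whereas the paper reserves that machinery for Theorem \ref{thm21}. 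What each approach buys: the paper's route is shorter given Proposition \ref{pro26} and stays combinatorial; yours bypasses the coprimality half of Proposition \ref{pro26}, yields the geometric smoothness statement as a by-product (and would feed directly into Theorem \ref{thm21} via Proposition \ref{pro21}), at the price of the strict-transform bookkeeping you flag --- which is indeed standard: write $f_B\circ\pi=u^{N_B}\hat f_B$ for each branch, multiply, and observe that $\hat g_u\neq 0$ at the point makes $\hat g$ a reduced irreducible germ there, forcing a single $B$ with multiplicity one. Two small points to pin down in a full write-up: if $g^{\mathrm{w}}_{d+1}=0$ then every $\phi_i$ divides it, so WHND already forces all $r_i=1$ and your case $r_i\ge 2$ is vacuous; and the evaluation at $s=s_i\neq 0$ is legitimate because each $u$-coefficient $g^{\mathrm{w}}_{d+\nu}(s^c,s^d)$ is a polynomial in $s$, so $\hat g\in K[s][[u]]$ can be re-expanded at $(0,s_i)$, and the nonvanishing criterion holds since every $(n_0,m_0)$-weighted homogeneous polynomial is a monomial times binomials $ax^{m_0}-by^{n_0}$, each of which pulls back to $s^{m_0c}(a-bs)$.
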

\begin{proof}
If $f=x^jy^l\cdot g(x,y)$ with $g$ convenient, then
$$s_N(f)=s_N(g)+j+l \text{ and }r(f)=r(g)+j+l,$$ 
so we may assume that $f$ is convenient. \\
{\em Step 1.} Assume first that the Newton diagram $\Gamma(f)$ has only one edge $E$. Then we can see that $f_{in}=\prod_{i=1}^r (f_i)_{in}.$
It follows from Proposition \ref{pro1} that for each $i$, $(f_i)_{in}$ has only one irreducible factor and therefore $f_{in}$ has at most $r$ irreducible factors. This means that $r\geq s_N(f)$.

If $r(f)>s_N(f)$, then there exist $i\neq j$ such that $(f_i)_{in}$ and $(f_j)_{in}$ have the same factor. This means that $(f_i)_{in}$ and $(f_j)_{in}$ are not coprime. Then by Proposition \ref{pro26}, $f$ is not WHNND.
\\
{\em Step 2.} Assume now that the Newton diagram $\Gamma(f)$ has $k$ edges $E_1,\ldots,E_k$. By Proposition \ref{pro4}, $f$ can be factorized as $f=\bar f_1\cdot \ldots\cdot \bar f_k$, where $\bar f_j$ is convenient, its Newton diagram has only one edge and $f_{E_j}=monomial\cdot (\bar f_j)_{in}$ for each $j=1,\ldots,k$. This implies that $s_N(\bar f_j)=s(f_{E_j})$. Then we obtain
$$r(f)=\sum_{j=1}^k r(\bar f_j)\geq \sum_{j=1}^k s_N(\bar f_j)=\sum_{j=1}^k s(f_{E_j})=s_N(f).$$

Now we assume that $r(f)>s_N(f)$. Then there exists a $j=1,\ldots,k$ such that $r(\bar f_{j})>s(f_{E_{j}})=s_N(\bar f_{j})$. It follows from Step 1 that $\bar f_{j}$ is not WHNND. Hence $f$ is not WHNND by Proposition \ref{pro26}, which proves the proposition.
\end{proof}
\begin{proposition}
For $0\neq f\in\langle x,y\rangle$ we have $s_N(f)\leq r(f)\leq r_N(f)$, and both equalities hold if and only if $f$ is WNND.
\end{proposition}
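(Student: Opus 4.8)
The plan is to reduce the whole statement to a single term-by-term comparison of the two combinatorial invariants $s_N(f)$ and $r_N(f)$, since the inequalities themselves are already available: the left inequality $s_N(f)\le r(f)$ is exactly Proposition \ref{thm20}, and the right inequality $r(f)\le r_N(f)$ is exactly Proposition \ref{pro20}. Thus the only genuine content of the proposition is the characterisation of when both inequalities are simultaneously equalities.

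First I would observe that, because of the chain $s_N(f)\le r(f)\le r_N(f)$, the two equalities $s_N(f)=r(f)$ and $r(f)=r_N(f)$ hold at the same time if and only if the outer terms already agree, that is, if and only if $s_N(f)=r_N(f)$. This collapses the problem to verifying the single equivalence $s_N(f)=r_N(f)\Leftrightarrow f\text{ is WNND}$, from which both directions of the proposition then follow by the collapse.

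Next I would compare $s_N(f)$ and $r_N(f)$ edge by edge. By definition $s_N(f)=\sum_{i=1}^k s(f_{E_i})+a+b$ and $r_N(f)=\sum_{i=1}^k l(E_i)+a+b$, where $a:=\max\{j\mid x^j\text{ divides }f\}$ and $b:=\max\{l\mid y^l\text{ divides }f\}$ are common to both. The monomial contributions therefore cancel, and since we have already recorded that $s(f_{E_i})\le l(E_i)$ for every edge, the equality $s_N(f)=r_N(f)$ forces $s(f_{E_i})=l(E_i)$ for each $i$ separately.

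Finally I would invoke Proposition \ref{pro23}, which states precisely that $s(f_{E_i})=l(E_i)$ is equivalent to $f$ being WND along the edge $E_i$; holding for every edge of $\Gamma(f)$, this is exactly the definition of $f$ being WNND. Combining this with the collapse observed above yields the asserted equivalence. I do not expect a real obstacle here: the argument is essentially an assembly of the preceding propositions, and the only step requiring care is the bookkeeping that the monomial terms $a$ and $b$ occur identically in $s_N(f)$ and $r_N(f)$, so that the comparison is genuinely edge-by-edge and the inequalities $s(f_{E_i})\le l(E_i)$ can be used to conclude equality in each summand.
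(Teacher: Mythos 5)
Your proof is correct and matches the paper's own argument: both take the inequalities from Propositions \ref{pro20} and \ref{thm20}, reduce the two equalities to the single condition $s_N(f)=r_N(f)$, and then use the edge-wise inequality $s(f_{E_i})\leq l(E_i)$ together with additivity and Proposition \ref{pro23} to conclude the equivalence with WNND. Your explicit bookkeeping of the common monomial terms $a$ and $b$ is a slightly more detailed rendering of what the paper states as ``both sides are additive with respect to edges,'' but the route is the same.
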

\begin{proof}
The inequalities follow from Proposition \ref{pro20} and Proposition \ref{thm20}. For each edge $E$ of $\Gamma(f)$, by Proposition \ref{pro23},
$f$ is WND along $E$ iff $s(f_E)=l(E).$ This implies that $f$ is WNND if and only if $s_N(f)=r_N(f)$ since $s(f_E)\leq l(E)$ and both sides are additive with respect to edges of $\Gamma(f)$.
\end{proof}
We investigate now the relations between $\nu(f)$, $\delta_N(f)$ and $\delta(f)$, which were studied in \cite{BeP00} and \cite{BGM10}. 
\begin{proposition}\label{pro21}\cite[Lemma 4.8]{BGM10} If $f\in K[[x,y]]$ then $\delta_N(f)=\nu(f)$.
\end{proposition}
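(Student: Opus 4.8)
The plan is to find one recursion, valid under a single blow-up of the origin, that both $\nu$ and $\delta_N$ satisfy, and then to induct on the number of blow-ups in a resolution of $f$. Write $m:=\mathrm{mt}(f)$, blow up the origin, and in the two standard charts $(x,y)=(x_1,x_1y_1)$ and $(x,y)=(x_2y_2,y_2)$ let $\tilde f_1,\tilde f_2$ denote the strict transforms; the two special points of $f$ in the first neighbourhood of $0$ are exactly the origins of these charts. First I would reduce to convenient $f$: for non-convenient $f$ one passes to $f+x^N+y^N$ with $N\gg 0$, invoking the definition of $\delta_N$ as a supremum, so that the essential case is the convenient one, where $$\delta_N(f)=V_2(\Gamma_-(f))-\tfrac12V_1(\Gamma_-(f))+\tfrac12\sum_i l(E_i).$$

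For $\nu$ the recursion is immediate from the definition of special points: the origin contributes $\tfrac{m(m-1)}2$, and every special point of $f$ in a neighbourhood of order $\ge 1$ is a special point, with the same multiplicity, of precisely one of $\tilde f_1,\tilde f_2$, so $$\nu(f)=\tfrac{m(m-1)}2+\nu(\tilde f_1)+\nu(\tilde f_2).$$ The bridge between the two invariants is that, at the level of Newton data, the first chart acts on $\text{supp}(f)$ by the unimodular shear $(i,j)\mapsto(i+j-m,j)$ and the second by $(i,j)\mapsto(i,i+j-m)$; both are injective, so no cancellation occurs and $\Gamma(\tilde f_1),\Gamma(\tilde f_2)$ are the corresponding shears of $\Gamma(f)$. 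The value $m=\min\{i+j\mid (i,j)\in\Gamma(f)\}$ is attained where $\Gamma(f)$ meets the anti-diagonal $i+j=m$, at a vertex $A_p$ (or along one edge); this point splits $\Gamma(f)$ into a lower-right part, carried by the first shear onto $\Gamma(\tilde f_1)$, and an upper-left part, carried by the second onto $\Gamma(\tilde f_2)$.

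The heart of the argument, and the step I expect to be the main obstacle, is the matching identity $$\delta_N(f)=\tfrac{m(m-1)}2+\delta_N(\tilde f_1)+\delta_N(\tilde f_2).$$ I would prove it by direct bookkeeping in the formula $V_2-\tfrac12V_1+\tfrac12\sum_i l(E_i)$: each shear preserves the lattice lengths $l(E_i)$, and on the edge-extents $(m_i,n_i)$ it acts by $(m_i,n_i)\mapsto(m_i-n_i,n_i)$ on the lower-right part and by $(m_i,n_i)\mapsto(m_i,n_i-m_i)$ on the upper-left part — the elementary steps of the subtractive Euclidean algorithm that underlie blowing up. Substituting these into the three terms $V_2$, $V_1$ and $\sum_i l(E_i)$ and collecting should leave exactly the extra summand $\tfrac{m(m-1)}2$, which accounts for the portion of $\Gamma_-(f)$ lying below the anti-diagonal and absorbed by the shift in each chart. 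A more transparent route to the same end is to rewrite the formula, via Pick's theorem, as $$\delta_N(f)=\#\{\text{interior lattice points of }\Gamma_-(f)\}+\sum_i l(E_i)-1,$$ and then to count interior lattice points on the two sides of the anti-diagonal split. The positions requiring separate care are the case where $m$ is attained along a whole edge and the degenerate ones where $A_p$ lies on a coordinate axis or a chart origin is not on the strict transform — then the relevant $\tilde f_i$ is a unit and contributes $0$ to both sides.

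The induction terminates because the resolution of a plane curve singularity is finite, and the base case is $m\le 1$, where $f$ is smooth and $\nu(f)=\delta_N(f)=0$ (its Newton diagram is a single segment of defect $0$). Combining the two recursions, which now have identical summands, yields $\delta_N(f)=\nu(f)$. I expect essentially all of the genuine work to reside in the combinatorial identity for $\delta_N$ in the previous paragraph; the $\nu$-side and the reduction steps are formal.
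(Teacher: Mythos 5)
The paper itself contains no proof of this proposition: it is quoted verbatim from \cite[Lemma 4.8]{BGM10}, so there is no internal argument to compare against. Your strategy --- matching the definitional recursion $\nu(f)=\tfrac{m(m-1)}{2}+\nu(\tilde f_1)+\nu(\tilde f_2)$ for the special points against the combinatorial identity $\delta_N(f)=\tfrac{m(m-1)}{2}+\delta_N(\tilde f_1)+\delta_N(\tilde f_2)$ under one blow-up, via the unimodular shears $(i,j)\mapsto(i+j-m,j)$ and $(i,j)\mapsto(i,i+j-m)$ --- is the natural one and is essentially the route taken in the cited source. The $\nu$-recursion is indeed immediate from the paper's definition of special points, your Pick-type rewriting $\delta_N(f)=I(\Gamma_-(f))+\sum_i l(E_i)-1$ (with $I$ the number of interior lattice points) is correct, since Pick's theorem gives $V_2=I+B/2-1$ with $B=c_k+e_0+\sum_i l(E_i)$, and one can check (the induction stays consistent) that if $f$ is convenient then each $\tilde f_i$ is again convenient or a unit. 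I verified the matching identity on several configurations, including the delicate one where $\mathrm{mt}(f)$ is attained along a whole edge of slope $-1$, whose lattice length leaves $\sum_i l(E_i)$ entirely and is absorbed into $\tfrac{m(m-1)}{2}$ (both charts then giving units); your case list is the right one.

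Two steps remain genuinely open as written. First, the reduction to convenient $f$: invoking "the definition of $\delta_N$ as a supremum" gives you $\delta_N(f)=\sup_N\delta_N(f^{(N)})=\sup_N\nu(f^{(N)})$ once the convenient case is done, but to conclude you still need $\sup_N\nu(f^{(N)})=\nu(f)$, which is a statement about $\nu$, not about the definition of $\delta_N$. It is true --- only finitely many special points carry $m_Q>1$, and for $N$ beyond the depth of the resolution the perturbation $x^N+y^N$ does not alter the multiplicities of the strict transforms at those points --- but it must be stated and proved; note also that for non-reduced $f$ there is no finite resolution, so your termination argument ("the resolution is finite, base case $m\le 1$") needs a separate remark there (the step-by-step recursions still hold and finiteness or divergence matches on both sides; beware that non-reduced $f$ can still have both invariants finite, e.g.\ $f=(x-y)^2$ has $\nu=\delta_N=1$). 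Second, the heart of the argument --- collecting the three terms $V_2$, $V_1$, $\sum_i l(E_i)$ under the shears and extracting exactly $\tfrac{m(m-1)}{2}$ --- is asserted ("should leave exactly the extra summand") rather than carried out; the bookkeeping is elementary and your sketch (lattice lengths preserved, extents undergoing the subtractive Euclid step, interior points counted on the two sides of the anti-diagonal) is the correct one, but as it stands this central computation is a plan, not a proof.
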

\begin{proposition}\label{pro22}\cite[Prop. 4.9]{BGM10} For $0\neq f\in \langle x,y\rangle$ we have $\delta_N(f)\leq\delta(f)$, and if $f$ is WNND then $\delta_N(f)=\delta(f)$.
\end{proposition}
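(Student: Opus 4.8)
The plan is to reduce everything to the identity $\delta_N(f)=\nu(f)$ of Proposition \ref{pro21} and then to compare the two multiplicity sums over infinitely near points,
$$\nu(f)=\sum_{Q\text{ special}}\frac{m_Q(m_Q-1)}{2},\qquad \delta(f)=\sum_{Q\to 0}\frac{m_Q(m_Q-1)}{2}.$$
Every special infinitely near point is in particular an infinitely near point of $0$, and each summand $m_Q(m_Q-1)/2$ is non-negative, so the sum defining $\nu(f)$ runs over a subset of the sum defining $\delta(f)$. This gives $\delta_N(f)=\nu(f)\le\delta(f)$ with no non-degeneracy hypothesis, settling the inequality. It also isolates the obstruction to equality: $\delta(f)=\nu(f)$ holds \emph{if and only if} $m_Q\le 1$ for every non-special infinitely near point $Q$, i.e. iff the strict transform is smooth (or absent) at each such $Q$. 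So it suffices to show that WNND forces every non-special point to be a simple point.

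I would run this through the blow-up recursion satisfied by both invariants. Writing $m=\mathrm{mt}(f)$ and letting $P$ range over the first-neighbourhood points (the points of the exceptional $\Bbb P^1$), one has
$$\delta(f)=\frac{m(m-1)}{2}+\sum_{P}\delta(\tilde f_P),\qquad \nu(f)=\frac{m(m-1)}{2}+\nu(\tilde f_{P_x})+\nu(\tilde f_{P_y}),$$
where $P_x,P_y$ are the two chart origins lying on the axes, which are the only special points of the first neighbourhood, and $\tilde f_P$ denotes the strict transform germ at $P$. Subtracting yields
$$\delta(f)-\nu(f)=\bigl(\delta(\tilde f_{P_x})-\nu(\tilde f_{P_x})\bigr)+\bigl(\delta(\tilde f_{P_y})-\nu(\tilde f_{P_y})\bigr)+\sum_{P\ne P_x,P_y}\delta(\tilde f_P).$$

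The off-axis points $P$ correspond to the tangent directions $(\beta_i:\alpha_i)$ of $f$ not lying on a coordinate axis, and the multiplicity of $\tilde f_P$ at $P$ is the multiplicity $r_i$ of the factor $(\alpha_i x-\beta_i y)$ in the tangent cone $f_m$. When $f_m$ is a monomial there are no such directions and the last sum is empty; otherwise the line $i+j=m$ contains an edge $E$ of $\Gamma(f)$ with $f_E=f_m$, and by Proposition \ref{pro23} WND along $E$ says precisely that $f_m$ is reduced away from the axes, i.e. $r_i=1$. Hence WNND makes every off-axis first-neighbourhood point simple and the last sum vanishes. The equality $\delta(f)=\nu(f)$ then follows by induction on $\delta(f)$, with a smooth germ as base case.

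The main obstacle is the inductive step: I must show that the strict transforms $\tilde f_{P_x},\tilde f_{P_y}$ at the two axis points of a WNND germ are again WNND, so that both bracketed differences vanish by the induction hypothesis. This requires tracking how $\Gamma(f)$ and its edge polynomials behave under the monoidal substitutions $x=x,\ y=xy$ and $x=xy,\ y=y$: one checks that the shear carrying $\Gamma(f)$ to $\Gamma(\tilde f_P)$ sends each edge polynomial to one with the same reducedness off the axes, so that the condition $r_i=1$ is preserved along every edge. As in the proof of Proposition \ref{pro23}, the only delicate point is in positive characteristic, where one must ensure the relevant exponents survive modulo $p$; the coprimality $\gcd(m_0,n_0)=1$ of each primitive edge direction is exactly what prevents a repeated factor from being created. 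To keep the edge/tangent-cone bookkeeping transparent I would, where convenient, first use Proposition \ref{pro4} to reduce to a single-edge Newton diagram before running the recursion.
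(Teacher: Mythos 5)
Your overall route is sound, and it is genuinely different from the paper's, which offers no proof of this proposition at all: it is quoted from \cite{BGM10}. Note, however, that the paper's own machinery also yields the equality for reduced $f$: WNND implies WHNND by Proposition \ref{pro23}, so Theorem \ref{thm21} applies; and the proof of that theorem (Lemmas \ref{lm21}, \ref{lm22}, \ref{lm23}) has the same skeleton as yours --- by Proposition \ref{pro21}, $\delta(f)-\delta_N(f)$ is the sum of $m_Q(m_Q-1)/2$ over the non-special infinitely near points $Q$, so everything reduces to showing that all such $Q$ are simple. The difference is organizational: where you induct on $\delta(f)$ and therefore need WNND to be stable under blowing up at the two chart origins, the paper follows each edge $E$ separately through the successive special blow-ups (induction on $m_0+n_0$ in Lemma \ref{lm21}), reads off $m_{Q_{E,i}}\leq r_i$ from the explicit computation preceding Lemma \ref{lm22}, and then invokes Lemma \ref{lm23}, which says that \emph{every} non-special point lies over some $Q_{E,i}$. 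Lemma \ref{lm23} is exactly what lets one bypass the stability statement that you single out as your main obstacle.

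Three points in your sketch need repair or completion. (1) Your stability claim is true, but ``one checks'' hides the real content, namely that $\Gamma(\tilde f_{P_x})$ acquires \emph{no new} edges: since $\text{supp}(\tilde f_{P_x})$ is the image of $\text{supp}(f)$ under the injective shear $(i,j)\mapsto (i+j-m,j)$, $m=\mathrm{mt}(f)$, the face of the transformed Newton polyhedron for a weight $(n',m')$ with $n',m'>0$ is the image of the face of $\Gamma_+(f)$ for the weight $(n',n'+m')$; hence edges of $\Gamma(\tilde f_{P_x})$ correspond bijectively to edges of $\Gamma(f)$ with $m_0>n_0$ (the degree-$m$ edge collapsing into the $y$-axis), with $(a_ix^{m_0}-b_iy^{n_0})^{r_i}$ becoming $x^{n_0r_i}(a_ix^{m_0-n_0}-b_iy^{n_0})^{r_i}$. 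Since $\gcd(m_0-n_0,n_0)=1$, the canonical factorization and the multiplicities $r_i$ are preserved verbatim and Proposition \ref{pro23} applies on both sides; beyond this coprimality, which you correctly flag, there is no further characteristic-$p$ subtlety in this step. (2) Your assertion that the multiplicity of $\tilde f_P$ at an off-axis point \emph{equals} $r_i$ is false in general --- for $(x-y)^2+y^3$ one has $r_1=2$ while the strict transform is smooth --- but only $m_P\leq r_i$ is needed, and that inequality holds and suffices when $r_i=1$. (3) Your induction on $\delta(f)$ tacitly assumes $\delta(f)<\infty$, i.e. $f$ reduced, whereas the proposition allows any $0\neq f\in\langle x,y\rangle$; if $f$ is WNND and non-reduced, any repeated non-monomial factor would force some $r_i\geq 2$ on an edge, so the multiple components must be coordinate axes, and then $\nu(f)=\delta(f)=\infty$ and the equality holds trivially --- this case should be stated rather than skipped (the inequality, as you say, needs no hypothesis). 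With these repairs your argument is complete.
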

Hence WNND is sufficient but, by the following example, not necessary for $\delta_N(f)=\delta(f)$.
\begin{example}{\rm
Let $f(x,y)=(x+y)^2+y^3\in K[[x,y]]$. Then $f$ is not WNND but $\delta_N(f)=\delta(f)=1$. This easy example shows also that WNND depends on the coordinates since $x^2+y^3$ is WNND. Note that $f$ is WHNND.
}\end{example}
Now we prove that WHNND is necessary and sufficient for $\delta_N(f)=\delta(f)$.
\begin{theorem}\label{thm21}
Let $f \in K[[x, y]]$ be reduced. Then $\delta(f)=\delta_N(f)$ if and only if $f$ is WHNND.
\end{theorem}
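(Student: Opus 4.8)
The plan is to translate everything into the language of infinitely near points. By Proposition \ref{pro21} we have $\delta_N(f)=\nu(f)$, so the defining formulas (a) and (b) give at once
$$\delta(f)-\delta_N(f)=\sum_{Q\to 0}\frac{m_Q(m_Q-1)}{2}-\sum_{Q\ \mathrm{special}}\frac{m_Q(m_Q-1)}{2}=\sum_{\substack{Q\to 0\\ Q\ \mathrm{not\ special}}}\binom{m_Q}{2}\ \ge\ 0.$$
Hence $\delta(f)=\delta_N(f)$ if and only if every non-special infinitely near point is a smooth point of the corresponding strict transform, i.e.\ $m_Q\le 1$ for all non-special $Q$. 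Since $f$ is reduced, $f=x^{\varepsilon_1}y^{\varepsilon_2}g$ with $\varepsilon_i\in\{0,1\}$ and $g$ convenient and coprime to $xy$; the smooth factors $x,y$ contribute no singular infinitely near points and do not affect the WHND conditions, so I would reduce immediately to $f$ convenient.

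Next I would use the branch decomposition $f=f_1\cdots f_r$ into its (distinct, as $f$ is reduced) branches. Multiplicity of strict transforms is additive, $m_Q=\sum_i m_{i,Q}$, so expanding $\binom{m_Q}{2}$ and summing over all $Q$ resp.\ over the special $Q$, together with the classical formula $i(f_i,f_j)=\sum_{Q\to 0}m_{i,Q}m_{j,Q}$, yields
$$\delta(f)=\sum_i\delta(f_i)+\sum_{i<j}i(f_i,f_j),\qquad \nu(f)=\sum_i\nu(f_i)+\sum_{i<j}I_{\mathrm{sp}}(f_i,f_j),$$
where $I_{\mathrm{sp}}(f_i,f_j):=\sum_{Q\ \mathrm{special}}m_{i,Q}m_{j,Q}\le i(f_i,f_j)$. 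Subtracting gives
$$\delta(f)-\delta_N(f)=\sum_i\bigl(\delta(f_i)-\nu(f_i)\bigr)+\sum_{i<j}\bigl(i(f_i,f_j)-I_{\mathrm{sp}}(f_i,f_j)\bigr),$$
a sum of non-negative terms. By Proposition \ref{pro26} (part (iii)) a convenient $f$ is WHNND iff each $f_i$ is WHNND and the initial parts $(f_i)_{\mathrm{in}}$ are pairwise coprime, so the theorem follows once I prove: \textbf{(A)} for an irreducible $f_i$, $\delta(f_i)=\nu(f_i)$ iff $f_i$ is WHNND; and \textbf{(B)} for two branches, $i(f_i,f_j)=I_{\mathrm{sp}}(f_i,f_j)$ iff $(f_i)_{\mathrm{in}},(f_j)_{\mathrm{in}}$ are coprime.

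I would prove both by the same toric blow-up analysis. For \textbf{(A)}, by Proposition \ref{pro1}(c) an irreducible $f_i$ has $(f_i)_{\mathrm{in}}=\xi\,(x^{m'}-\lambda y^{n'})^q$ with $(m',n')=1$ and a single edge $E$. Resolving $E$ by the associated chain of blow-ups at chart origins keeps the strict transform passing through special points only, and these contribute exactly $\delta_N(f_i)=\nu(f_i)$; the strict transform then meets the exceptional configuration in a single non-special point $Q_0$ lying over the factor $x^{m'}-\lambda y^{n'}$. The key point is that in suitable local coordinates $(U,V)$ at $Q_0$, with $V$ cutting the exceptional divisor, the strict transform has the form $c\,U+V^{q}+(\text{higher order})$, where $c\neq 0$ exactly when $(x^{m'}-\lambda y^{n'})\nmid f^{\mathrm w}_{d+1}$, i.e.\ exactly when $f_i$ is WHND along $E$; then $m_{Q_0}=1$, there are no non-special singular points, and $\delta(f_i)=\nu(f_i)$, whereas in the degenerate case (including $f^{\mathrm w}_{d+1}=0$, cf.\ Lemma \ref{lm20}) the linear term drops out, $m_{Q_0}\ge 2$, and $\delta(f_i)>\nu(f_i)$. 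For \textbf{(B)} the same analysis shows that two branches share a common non-special infinitely near point iff their strict transforms remain together past the last special point of the chain, which by Proposition \ref{pro1}(c) happens precisely when $(f_i)_{\mathrm{in}}$ and $(f_j)_{\mathrm{in}}$ share the factor $x^{m'}-\lambda y^{n'}$.

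The main obstacle is the local computation at $Q_0$ in \textbf{(A)}, performed uniformly in arbitrary characteristic: one must follow the multiplicity of the strict transform through the entire chain resolving $E$ and identify the transverse linear coefficient $c$ with the non-vanishing of $f^{\mathrm w}_{d+1}$ on $\{x^{m'}=\lambda y^{n'}\}$. This is exactly where positive characteristic enters (the ``wild vanishing cycles'' of the abstract): the argument must invoke Proposition \ref{pro1}(c) and Lemma \ref{lm13} to guarantee that the branch behaves along $E$ as in characteristic zero, so that WHND is detected already by the first two weighted-homogeneous terms $f^{\mathrm w}_d,f^{\mathrm w}_{d+1}$. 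Once \textbf{(A)} and \textbf{(B)} are established, combining the displayed identity with Proposition \ref{pro26} and the reduction to the convenient case completes the proof; as a consistency check this also recovers $\delta(f)=\delta_N(f)\Leftrightarrow f$ is WHNND in agreement with Proposition \ref{thm20} ($r(f)=s_N(f)\Leftrightarrow$ WHNND).
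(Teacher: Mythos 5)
Your proposal is correct in substance, and its local crux coincides with the paper's: both start from Proposition \ref{pro21} to write $\delta(f)-\delta_N(f)=\sum_{Q\ \mathrm{non\text{-}special}}m_Q(m_Q-1)/2$, reducing the theorem to deciding when every non-special infinitely near point has multiplicity $1$, and both decide this by transporting the first two weighted-homogeneous terms $f^{\mathrm w}_d,f^{\mathrm w}_{d+1}$ through the chain of blowups at chart origins until the edge becomes homogeneous; there the multiplicity at the resulting non-special points $Q_{E,i}$ is $1$ iff $r_i=1$ or the factor does not divide the second term. Your deferred ``follow the chain'' step is exactly the paper's Lemma \ref{lm21} (induction on $m_0+n_0$ via the transformation law $(\tilde f_1)^{\mathrm w}_{e_0+\nu}=f^{\mathrm w}_{d_0+\nu}(x_1y_1,y_1)/y_1^{m}$) combined with the explicit computation of Lemma \ref{lm22}, so what you flag as the main obstacle is genuinely provable along your lines. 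Where you differ is the global bookkeeping: the paper never decomposes $f$ into branches, arguing edge-by-edge on $f$ itself, and its Lemma \ref{lm23} (every non-special $Q$ satisfies $Q\to Q_{E,i}$ for some edge, by induction on $n(f,Q)$, whence $m_Q\le m_{Q_{E,i}}$) does at one stroke the work of your two claims that the toric chain meets only special points and that separated branches share no later infinitely near points. You instead split $\delta-\delta_N$ via Noether's formula $i(f_i,f_j)=\sum_Q m_{i,Q}m_{j,Q}$ into per-branch defects $\delta(f_i)-\nu(f_i)$ and pairwise defects $i(f_i,f_j)-I_{\mathrm{sp}}(f_i,f_j)$, matching them against Proposition \ref{pro26}(b)(iii). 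This buys a transparent numerical identity and reduces the hard analysis to irreducible germs, at the cost of importing Noether's formula in arbitrary characteristic and the permanence of separation of strict transforms --- both classical, so your route is viable, if somewhat longer than the paper's.

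Two small repairs, neither fatal. First, the local normal form is swapped: with $V=0$ the exceptional divisor at $Q_0$, the strict transform reads $\mathrm{unit}\cdot U^{q}+cV+(\text{terms of multiplicity}\ \ge 2)$, not $cU+V^{q}$; since you only use ``multiplicity $1$ iff $q=1$ or $c\neq 0$'', this is cosmetic. Second, your closing consistency check overstates Proposition \ref{thm20}, which asserts only that WHNND implies $s_N(f)=r(f)$; the converse fails, e.g.\ $f=(x-y+x^2)^2+x^5$ in characteristic zero has $r(f)=s_N(f)=1$ but is not WHND along its single edge (and indeed $\delta(f)=2>1=\delta_N(f)$). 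Since that remark is not load-bearing, nothing in your argument breaks.
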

We will prove the theorem after three technical lemmas.
 
Let $E$ be an edge of the Newton diagram of $f$. We write
$$f_{E}=monomial\times\prod_{i=1}^{s}(a_ix^{m_0}-b_iy^{n_0})^{r_i},$$ 
where $a_i,b_i\in K^*$, $(a_i:b_i)$ pairwise distinct; $m_0,n_0,r_i\in \Bbb N_{>0}$, $(m_0,n_0)=1$. 
\begin{lemma}\label{lm21}With the above notations, there exist an integer $n$ and an infinitely near point $P_n$ in the $n$-th neighbourhood of $0$, such that
$$(\tilde f_{n})_{E_{n}}(u,v)=monomial\times\prod_{i=1}^{s}(a_{i}u-b_{i}v)^{r_{i}},$$
where $\tilde f_{n}$ is a local equation of the strict transform of $\tilde f$ at $P_n$ and $E_{n}$ is some edge of its Newton diagram $\Gamma(\tilde f_{n})$. Moreover, $f$ is WHND along $E$ if and only if $\tilde f_{n}$ is WHND along $E_{n}$.
\end{lemma}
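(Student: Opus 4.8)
The plan is to prove the lemma by induction on $m_0+n_0$, realizing the (subtractive) Euclidean algorithm on the coprime pair $(m_0,n_0)$ through a sequence of point blowing-ups. If $m_0=n_0=1$ there is nothing to do: take $n=0$, $P_0=0$, $\tilde f_0=f$, $E_0=E$, and $f_E$ already has the required shape. So assume $m_0+n_0>2$; after possibly interchanging $x$ and $y$ we may assume $m_0>n_0\geq 1$, so that each factor $a_ix^{m_0}-b_iy^{n_0}$ has tangent direction $y=0$. I would blow up the origin and work in the chart $x=u,\ y=uv$, in which the exceptional divisor is $\{u=0\}$ and the common tangent direction $y=0$ corresponds to the point $P_1=(0,0)$; all branches cut out by the factors $a_ix^{m_0}-b_iy^{n_0}$ still pass through $P_1$, so $P_1$ is an infinitely near point of $0$ on $f$.

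First I would record how the weighted structure transforms. Writing $f=f^{\mathrm{w}}_d+f^{\mathrm{w}}_{d+1}+\ldots$ for the $(n_0,m_0)$-weighted decomposition, the edge $E$ sits on the lowest supporting line, so $f^{\mathrm{w}}_d=f_E$. Under $x=u,\ y=uv$ a monomial $x^iy^j$ becomes $u^{i+j}v^j$, and the elementary identity $n_0(i+j)+(m_0-n_0)j=n_0i+m_0j$ shows that $f^{\mathrm{w}}_l(u,uv)$ is $(n_0,m_0-n_0)$-weighted homogeneous of the \emph{same} degree $l$. Passing to the strict transform $\tilde f_1=u^{-m}f(u,uv)$, with $m=\mathrm{mt}(f)$, only shifts every weighted degree uniformly by $-n_0m$; since $\tilde f_1$ is a polynomial each of its weighted components is one too, and $f^{\mathrm{w}}_d,f^{\mathrm{w}}_{d+1}$ become exactly the first two terms of the $(n_0,m_0-n_0)$-weighted decomposition of $\tilde f_1$. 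Because $a_ix^{m_0}-b_iy^{n_0}=u^{n_0}\bigl(a_iu^{m_0-n_0}-b_iv^{n_0}\bigr)$, the edge form becomes
$$(\tilde f_1)_{E_1}=u^{-m}f_E(u,uv)=monomial\times\prod_{i=1}^{s}\bigl(a_iu^{m_0-n_0}-b_iv^{n_0}\bigr)^{r_i},$$
with the exponents $r_i$ and coefficients $(a_i,b_i)$ unchanged and the reduced coprime pair $(m_0-n_0,n_0)$; here $E_1$ is the edge of $\Gamma(\tilde f_1)$ on the lowest $(n_0,m_0-n_0)$-weighted supporting line.

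It remains to see that WHND is preserved, and this is the step I expect to require the most care. By the definition of WHND (equivalently Lemma \ref{lm20}) I must show, for each $r_i>1$, that $(a_ix^{m_0}-b_iy^{n_0})$ divides $f^{\mathrm{w}}_{d+1}$ if and only if $(a_iu^{m_0-n_0}-b_iv^{n_0})$ divides $(\tilde f_1)^{\mathrm{w}}_{d+1-n_0m}$. Since these are irreducible weighted-homogeneous polynomials (as $\gcd=1$), divisibility is tested by vanishing: fixing $(\beta,\gamma)\in(K^*)^2$ with $a_i\beta^{m_0}=b_i\gamma^{n_0}$, any weighted-homogeneous $P$ satisfies $P(\beta t^{n_0},\gamma t^{m_0})=P(\beta,\gamma)t^{\deg P}$, so $(a_ix^{m_0}-b_iy^{n_0})\mid P$ iff $P(\beta,\gamma)=0$ (using that vanishing at one orbit point forces vanishing along the whole orbit). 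The point $(\beta,\gamma)$ corresponds in the chart to $(u,v)=(\beta,\gamma/\beta)$, which lies on $\{a_iu^{m_0-n_0}-b_iv^{n_0}=0\}$, and from $(\tilde f_1)^{\mathrm{w}}_{d+1-n_0m}(\beta,\gamma/\beta)=\beta^{-m}f^{\mathrm{w}}_{d+1}(\beta,\gamma)$ with $\beta\neq0$ the two vanishing conditions coincide. This gives the WHND equivalence for a single blowup. Applying the induction hypothesis to $(\tilde f_1,E_1)$, whose defining pair $(m_0-n_0,n_0)$ has strictly smaller sum, produces the infinitely near point $P_n$ (in the $n$-th neighbourhood of $0$) with $(\tilde f_n)_{E_n}=monomial\times\prod_{i=1}^{s}(a_iu-b_iv)^{r_i}$, and composing the equivalences yields the ``moreover'' statement. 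The genuine subtlety throughout is bookkeeping: verifying that dividing by $u^{m}$ carries $f^{\mathrm{w}}_{d+1}$ precisely onto the second weighted term of $\tilde f_1$ and that $E_1$ is an honest edge of $\Gamma(\tilde f_1)$, both of which follow from the degree-shift identity above.
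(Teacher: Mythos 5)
Your proposal is correct and takes essentially the same route as the paper's proof: induction on $m_0+n_0$ via a single blowup (you take $m_0>n_0$ and the chart $x=u$, $y=uv$, the paper symmetrically takes $m_0<n_0$ and the chart $x=x_1y_1$, $y=y_1$), observing that the $(n_0,m_0)$-weighted decomposition transforms term by term with a uniform degree shift, so the first and second weighted terms of $f$ map exactly to those of $\tilde f_1$ and the WHND condition transfers. Your explicit orbit-evaluation argument for the divisibility equivalence simply fills in a step the paper leaves as ``it follows that''.
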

\begin{proof}
We prove the lemma by induction on $m_0+n_0$. If $m_0+n_0=2$, i.e. $m_0=n_0=1$, then the claim is trivial. 

Suppose $m_0+n_0>2$. Now we show the induction step. Since $m_0+n_0>2$ and $\text{gcd}(m_0,n_0)=1$, $m_0\neq n_0$. We may then assume that $m_0<n_0$. Then $P_1:=(1,0)$ is a special infinitely near point of $0$ and the local equation of $\tilde f_1$ at $P_1$ in chart 2, is:
$$\tilde f_1(x_1,y_1)=\frac{f(x_1y_1,y_1)}{y_1^{m}}, \text{ where } m=\mathrm{mt}(f).$$ 
Let $f=f^{\mathrm{w}}_{d_0}+f^{\mathrm{w}}_{{d_0}+1}+\ldots$ with $f^{\mathrm{w}}_{d_0}\neq 0$ be the $(n_0,m_0)$-weighted homogeneous decomposition of $f$. It is easy to see that $\tilde f_1=(\tilde f_1)^{\mathrm{w}}_{e_0}+(\tilde f_1)^{\mathrm{w}}_{{e_0}+1}+\ldots$ is the $(n_0-m_0,m_0)$-weighted homogeneous decomposition of $\tilde f_1$ with 
$$e_0=d_0-m\cdot m_0\text{ and } (\tilde f_1)^{\mathrm{w}}_{e_0+\nu}=\frac{f^{\mathrm{w}}_{d_0+\nu}(x_1y_1,y_1)}{y_1^{m}}, \forall \nu\geq 0.$$ 
In particular, 
$$(\tilde f_1)^{\mathrm{w}}_{e_0}=monomial\times\prod_{i=1}^{s}(a_{i}x^{m_0}-b_{i}y^{n_0-m_0})^{r_{i}}.$$
We denote by $E_1$ the convex hull of the support of $(\tilde f_1)^{\mathrm{w}}_{e_0}$. Clearly, $E_1$ is an edge of $\Gamma(\tilde f_1)$. Since
$$(\tilde f_1)_{E_1}=(\tilde f_1)^{\mathrm{w}}_{e_0}=monomial\times\prod_{i=1}^{s}(a_{i}x^{m_0}-b_{i}y^{n_0-m_0})^{r_{i}}$$
and since $(\tilde f_1)^{\mathrm{w}}_{e_0+1}=\frac{f^{\mathrm{w}}_{d_0+1}(x_1y_1,y_1)}{y_1^{m}}$, it follows that $f$ is WHND along $E$ iff $\tilde f_1$ is also WHND along $E_1$.
Hence the induction step is proven by applying the induction hypothesis to $\tilde f_1$.
\end{proof}
The above lemma yields that $Q_{E,i}:=(b_{i}:a_{i}), i=1,\ldots,s$, are determined by $f_E$ and they are tangent directions of $\tilde f_{n}$. Then they are infinitely near points in the first neighbourhood of $P_n$. Consequently, they are infinitely near points in the $(n+1)$-th neighbourhood of $0$. To compute the multiplicity $m_{Q_{E,i}}$, we consider the local equation of the strict transform $\tilde f_{n+1}$ of $\tilde f_{n}$ at $Q_{E,i}=(b_{i}:a_{i})$ in chart 2:
\begin{eqnarray*}
\tilde f_{n+1}(u_1,v_1)&=&\frac{\tilde f_{n}((u_1+\frac{b_{i}}{a_{i}})v_1,v_1)}{v_1^{e_0}}\\
&=& \frac{(\tilde f_{n})^{\mathrm{w}}_{e_0}((u_1+\frac{b_{i}}{a_{i}})v_1,v_1)}{v_1^{e_0}}+\frac{(\tilde f_{n})^{\mathrm{w}}_{e_0+1}((u_1+\frac{b_{i}}{a_{i}})v_1,v_1)}{v_1^{e_0}}+\ldots\\
&=& (\tilde f_{n})^{\mathrm{w}}_{e_0}(u_1+\frac{b_{i}}{a_{i}},1)+v_1\cdot (\tilde f_{n})^{\mathrm{w}}_{e_0+1}(u_1+\frac{b_{i}}{a_{i}},1)+\ldots,
\end{eqnarray*}
where $\tilde f_{n}=(\tilde f_{n})^{\mathrm{w}}_{e_0}+(\tilde f_{n})^{\mathrm{w}}_{{e_0}+1}+\ldots$ with $(\tilde f_{n})^{\mathrm{w}}_{e_0}\neq 0$, is the ($(1,1)$-weighted) homogeneous decomposition of $f$. Since 
$$(\tilde f_{n})_{E_{n}}(u,v)=(\tilde f_{n})^{\mathrm{w}}_{e_0}(u,v)=(a_iu-b_iv)^{r_i}\cdot g(u,v)\text{ with }g(b_{i},a_{i})\neq 0,$$
we get
$$\tilde f_{n+1}(u_1,v_1)=(a_iu_1)^{r_i}\cdot g(u_1+\frac{b_{i}}{a_{i}},1)+v_1\cdot (\tilde f_{n})^{\mathrm{w}}_{e_0+1}(u_1+\frac{b_{i}}{a_{i}},1)+\ldots,$$
with $g(u_1+\frac{b_{i}}{a_{i}},1)$ a unit. In the following, this equality will be used to compare the multiplicity $\mathrm{mt}(\tilde f_{n+1})$ with 1.
\begin{lemma}\label{lm22}
With the above notations,
\begin{itemize}
\item[(a)] if $f$ is WHND along $E$, then $m_{Q_{E,i}}=1$ for all $i$;
\item[(b)] if $f$ is not WHND along $E$, then $m_{Q_{E,i}}>1$ for some $i$.
\end{itemize}
\end{lemma}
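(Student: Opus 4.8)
The plan is to read off the multiplicity $m_{Q_{E,i}}=\mathrm{mt}(\tilde f_{n+1})$ directly from the explicit local equation
$$\tilde f_{n+1}(u_1,v_1)=\sum_{\nu\geq 0}v_1^{\nu}\,(\tilde f_{n})^{\mathrm{w}}_{e_0+\nu}\bigl(u_1+\tfrac{b_i}{a_i},1\bigr)$$
computed just before the lemma, and to locate the lowest-degree monomial in $(u_1,v_1)$. The key bookkeeping device is that the order of vanishing of $(\tilde f_{n})^{\mathrm{w}}_{e_0+\nu}(u_1+\tfrac{b_i}{a_i},1)$ at $u_1=0$ equals the multiplicity $k_\nu$ with which the linear form $a_iu-b_iv$ divides the homogeneous polynomial $(\tilde f_{n})^{\mathrm{w}}_{e_0+\nu}$, since $(b_i:a_i)$ is precisely the projective root of $a_iu-b_iv$. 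Thus the summand of index $\nu$ contributes only monomials of total degree at least $\nu+k_\nu$.

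First I would record the two decisive summands. For $\nu=0$ we have $(\tilde f_{n})_{E_n}=(\tilde f_{n})^{\mathrm{w}}_{e_0}=(a_iu-b_iv)^{r_i}g(u,v)$ with $g(b_i,a_i)\neq 0$, so $k_0=r_i$ exactly and this summand begins in degree $r_i$; its degree-$r_i$ part is the nonzero monomial $a_i^{r_i}g(\tfrac{b_i}{a_i},1)\,u_1^{r_i}$. For $\nu=1$ the summand is $v_1\cdot(\tilde f_{n})^{\mathrm{w}}_{e_0+1}(u_1+\tfrac{b_i}{a_i},1)$, whose lowest-degree contribution is the $v_1$-monomial with coefficient $(\tilde f_{n})^{\mathrm{w}}_{e_0+1}(\tfrac{b_i}{a_i},1)$; this coefficient is nonzero precisely when $a_iu-b_iv$ does not divide $(\tilde f_{n})^{\mathrm{w}}_{e_0+1}$. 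Every summand with $\nu\geq 2$ begins in degree $\geq 2$, and the constant term always vanishes because $a_iu-b_iv$ divides $(\tilde f_{n})^{\mathrm{w}}_{e_0}$, so that $m_{Q_{E,i}}\geq 1$ in all cases.

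For part (a) I would argue that under WHND a degree-one monomial is always produced. If $r_i=1$ the $\nu=0$ summand already supplies the monomial $u_1$, forcing $\mathrm{mt}(\tilde f_{n+1})=1$. If $r_i>1$ then, by Lemma \ref{lm21}, WHND of $f$ along $E$ transports to WHND of $\tilde f_{n}$ along $E_n$, which in the $(1,1)$-weighting says exactly that $a_iu-b_iv\nmid(\tilde f_{n})^{\mathrm{w}}_{e_0+1}$; hence the $\nu=1$ summand supplies the monomial $v_1$ and again $m_{Q_{E,i}}=1$. For part (b), non-WHND of $\tilde f_{n}$ along $E_n$ furnishes an index $i$ with $r_i>1$ and $a_iu-b_iv\mid(\tilde f_{n})^{\mathrm{w}}_{e_0+1}$; then $k_0=r_i\geq 2$, the $\nu=1$ summand has $k_1\geq 1$ and so begins in degree $\geq 2$, and all higher summands begin in degree $\geq 2$, so no monomial of degree $\leq 1$ survives and $m_{Q_{E,i}}>1$.

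The term-by-term degree estimates are routine; the one point demanding care is the passage through Lemma \ref{lm21}, namely confirming that WHND of $f$ along $E$ is equivalent, factor by factor, to the non-divisibility statement for $(\tilde f_{n})^{\mathrm{w}}_{e_0+1}$ in the $(1,1)$-weighted coordinates. I would make this explicit by tracking how each factor $a_ix^{m_0}-b_iy^{n_0}$ of $f^{\mathrm{w}}_{d+1}$ transforms under the blow-ups in the proof of Lemma \ref{lm21}, so that divisibility of $f^{\mathrm{w}}_{d+1}$ by $a_ix^{m_0}-b_iy^{n_0}$ matches divisibility of $(\tilde f_{n})^{\mathrm{w}}_{e_0+1}$ by $a_iu-b_iv$, securing the per-index correspondence used above.
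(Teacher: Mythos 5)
Your proof is correct and takes essentially the same route as the paper: both read $m_{Q_{E,i}}=\mathrm{mt}(\tilde f_{n+1})$ off the explicit chart-2 expansion displayed just before the lemma, and both invoke Lemma \ref{lm21} to transport WHND along $E$ to the $(1,1)$-weighted non-divisibility statement for $(\tilde f_{n})^{\mathrm{w}}_{e_0+1}$, noting that this statement is already per-index since the factorization of $(\tilde f_n)_{E_n}$ has the same $(a_i{:}b_i)$ and $r_i$. Your additional degree bookkeeping (the $\nu+k_\nu$ estimates and the observation that the $u_1$- and $v_1$-monomials cannot cancel against other summands) merely makes explicit what the paper leaves implicit.
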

\begin{proof}
(a) Since $f$ is WHND along $E$, it follows from Lemma \ref{lm21} that $f_n$ is WHND along $E_n$, i.e. either $r_i=1$ for all $i$ or $(a_{i}u-b_{i}v)$ is not a factor of $(\tilde f_{n})^{\mathrm{w}}_{{e_0}+1}$ for each $r_i>1$. If $r_i=1$, it is easy to see that $m_{Q_{E,i}}=\mathrm{mt}(\tilde f_{n+1}(u_1,v_1))=1$ for all $i$. If $r_i>1$ and $(a_{i}u-b_{i}v)$ is not a factor of $(\tilde f_{n})^{\mathrm{w}}_{{e_0}+1}$. Then $(\tilde f_{n})^{\mathrm{w}}_{{e_0}+1}(b_{i},a_{i})\neq 0$. This implies that $(\tilde f_{n})^{\mathrm{w}}_{e_0+1}(u_1+\frac{b_{i}}{a_{i}},1)$ is a unit. Hence $$m_{Q_{E,i}}=\mathrm{mt}(\tilde f_{n+1}(u_1,v_1))=1.$$

(b) Assume that $f$ is not WHND along $E$. By Lemma \ref{lm21}, $\tilde f_{n}$ is not WHND along $E_{n}$, i.e. there exists an $i$ such that $r_i>1$ and $(a_{i}u-b_{i}v)$ is a factor of $(\tilde f_{n})^{\mathrm{w}}_{{e_0}+1}$. Therefore $(\tilde f_{n})^{\mathrm{w}}_{{e_0}+1}(u,v)=(a_{i}u-b_{i}v)\cdot h(u,v)$ and then
$$(\tilde f_{n})^{\mathrm{w}}_{e_0+1}(u_1+\frac{b_{i}}{a_{i}},1)=(a_iu_1)\cdot h(u_1+\frac{b_{i}}{a_{i}},1).$$ 
Hence $m_{Q_{E,i}}=\mathrm{mt}(\tilde f_{n+1}(u_1,v_1))>1$.
\end{proof}
\begin{lemma}\label{lm23}
With the above notations, if $Q$ is not special, then there exists an edge $E$ of $\Gamma(f)$ such that $Q\to Q_{E,i}$ for some $i$.
\end{lemma}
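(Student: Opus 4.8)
The plan is to prove the statement for all reduced $f$ simultaneously, by induction on the neighbourhood level $n=n(f,Q)$ of $Q$. First I would record two facts that make the induction legitimate. Being \emph{special} is a property of the single blow-up that creates $Q$ (it only asks whether $Q$ is the origin of one of the two standard charts of that blow-up), so it is unchanged if we replace $f$ by a strict transform; and the strict transform of a reduced series is again reduced. Thus if $Q\to P_1$ with $P_1$ in the first neighbourhood of $0$ and $P_1\neq Q$, then $Q$ is still a non-special infinitely near point of the strict transform $\tilde f_1$ at $P_1$, now of level $n-1$, and the induction hypothesis applies to $\tilde f_1$.

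The heart of the matter is the first neighbourhood. A non-special point of level $1$ is a tangent direction $(\beta:\alpha)$ with $\alpha\beta\neq 0$, i.e. an off-axis root of the tangent cone $f_m$, where $m=\mathrm{mt}(f)$. Since $f_m$ is the restriction of $f$ to the antidiagonal line $i+j=m$, the existence of such a root forces $\Gamma(f)$ to contain an edge $E$ on this line, necessarily with $(m_0,n_0)=(1,1)$; this edge is unique by convexity. For it Lemma~\ref{lm21} is in its base case ($m_0+n_0=2$, $n=0$), so the points $Q_{E,i}=(b_i:a_i)$ are exactly the off-axis roots of $f_m$ in the first neighbourhood of $0$. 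Hence every non-special point of level $1$ is one of the $Q_{E,i}$, which settles the case $n=1$.

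For the inductive step let $Q\to P_1$ as above. If $P_1$ is non-special it is of level $1$, so by the previous paragraph $P_1=Q_{E,i}$ for the antidiagonal edge $E$, and $Q\to P_1=Q_{E,i}$ finishes this case. If $P_1$ is special, say $P_1=(0:1)$ (the case $(1:0)$ being symmetric), I would pass to $\tilde f_1$ and apply the induction hypothesis: there is an edge $E_1$ of $\Gamma(\tilde f_1)$ and an index $i$ with $Q\to Q^{(\tilde f_1)}_{E_1,i}$. It then remains to identify $Q^{(\tilde f_1)}_{E_1,i}$ with a point $Q^{(f)}_{E,i}$ attached to an edge of the original $f$. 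This is exactly one step of Lemma~\ref{lm21}: the substitution $(i,j)\mapsto(i,i+j-m)$ realising the blow-up at $(0:1)$ sends precisely the edges of $\Gamma(f)$ with $m_0<n_0$ to the edges of $\Gamma(\tilde f_1)$ near $P_1$ (edges with $m_0>n_0$ acquire positive slope and belong to the other chart $(1:0)$, while the antidiagonal edge is pushed onto the exceptional divisor and contributes only level-$1$ points), it carries the weight $(m_0,n_0)$ to $(m_0,n_0-m_0)$ while preserving the data $a_i,b_i,r_i$, and the chain defining $Q^{(f)}_{E,i}$ is exactly one blow-up longer than the one defining $Q^{(\tilde f_1)}_{E_1,i}$. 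Consequently $Q^{(\tilde f_1)}_{E_1,i}=Q^{(f)}_{E,i}$ and $Q\to Q^{(f)}_{E,i}$, completing the induction.

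The main obstacle is precisely this edge bookkeeping in the special case: one must verify that the local Newton diagram of $\tilde f_1$ at $P_1$ consists of exactly the shear-images of the edges of $\Gamma(f)$ on the appropriate side of the antidiagonal, and that the division by $y_1^{m}$ defining the strict transform creates no spurious edges, so that the edge $E_1$ produced by the induction hypothesis really lifts to an edge $E$ of $f$ with $Q^{(\tilde f_1)}_{E_1,i}=Q^{(f)}_{E,i}$. This is the one-step toric computation already performed in the proof of Lemma~\ref{lm21}; the present lemma is essentially its geometric reorganisation, tracking \emph{which} infinitely near points are reached rather than how the WHND condition transforms.
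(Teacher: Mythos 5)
Your proposal is correct and follows essentially the same route as the paper: induction on the neighbourhood level $n(f,Q)$, with the base case handled by the antidiagonal edge $E$ carrying the tangent cone $f_m$ (non-monomial since $Q$ is not special), and the special-point step handled by passing to the strict transform $\tilde f_1$, applying the induction hypothesis there, and lifting the resulting edge back through the shear $(i,j)\mapsto(i,i+j-m)$ with weights $(m_0,n_0)\mapsto(m_0,n_0-m_0)$. Your ``edge bookkeeping'' concern is exactly what the paper's proof settles explicitly, by showing that the $(n_0',m_0')$-weighted decomposition of $\tilde f_1$ is the level-shifted image of the $(n_0,m_0)$-weighted decomposition of $f$, so that the convex hull of $\mathrm{supp}(f^{\mathrm{w}}_d)$ is a genuine edge $E$ of $\Gamma(f)$ with $Q_{E,i}=Q_{E',i}$, matching your identification $Q^{(\tilde f_1)}_{E_1,i}=Q^{(f)}_{E,i}$.
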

\begin{proof}
We will prove the lemma by induction on $n(f,Q)$. First, since $Q$ is not special, $n(f,Q)\geq 1$. If $n(f,Q)=1$, then $Q$ is a tangent direction of $f$ and we can write $Q=(b:a)$, where $(ax-by)$ is a factor of the tangent cone $f_m$ of $f$. Since $Q$ is not special, $f_m$ is not monomial. This implies that there exists an edge $E$ of $\Gamma(f)$ such that $f_E=f_m$. We can write
$$f_E=f_m=monomial\times \prod_{i=1}^s (a_ix-b_iy)^{r_i}$$ 
with $(b:a)=(b_1:a_1)$, consequently $Q=Q_{E,1}$.

Now we prove the induction step. Suppose that $n(f,Q)>1$. Then $Q\to P$ for some infinitely near point $P$ in the first neighbourhood of $0$. If $P$ is not special, then as above, $P=Q_{E,1}$ for some edge $E$ of $\Gamma(f)$ and hence $Q\to Q_{E,1}$. If $P$ is special, we may assume that $P=(0:1)$. Then the local equation of the strict transform $\tilde f$ of $f$ at $P$ in chart 2, is:
$$\tilde f(u,v)=\frac{f(uv,v)}{v^m}.$$
Since $n(\tilde f,Q)=n(f,Q)-1$ and by induction hypothesis, there is an edge $E'$ of $\Gamma(\tilde f)$ such that
$$\tilde f_{E'}=monomial\times\prod_{i=1}^{s}(a_{i}u^{m'_0}-b_{i}v^{n'_0})^{r_{i}},$$ 
where $a_{i},b_{i}\in K^*$, $(a_{i}:b_{i})$ pairwise distinct; $m'_0,n'_0,r_{i}\in \Bbb N_{>0}$, $\text{gcd}(m'_0,n'_0)=1$ and $Q\to Q_{E',i}$ for some $i$. Let $m_0=m'_0, n_0=m'_0+n'_0$ and let $f=f^{\mathrm{w}}_{d}+f^{\mathrm{w}}_{d+1}+\ldots$ be the $(n_0,m_0)$-weighted homogeneous decomposition of $f$. Then for each $l>d$, we have
\begin{eqnarray*}
\frac{f^{\mathrm{w}}_{l}(uv,v)}{v^m}&=&\sum_{n_0\alpha +m_0\beta =l} c_{\alpha\beta}(uv)^{\alpha}v^{\beta -m}\\
&=&\sum_{n'_0\alpha+m'_0(\alpha+\beta -m)=l-mm'_0} c_{\alpha\beta}u^{\alpha}v^{\alpha+\beta-m}.
\end{eqnarray*}
This implies that $\tilde f=\tilde f^{\mathrm{w}}_{e}+\tilde f^{\mathrm{w}}_{e+1}+\ldots$ is the $(n'_0,m'_0)$-weighted homogeneous decomposition of $\tilde f$, where
$e=d-mm'_0$ and $\tilde f^{\mathrm{w}}_{l-mm'_0}=\frac{f_{l}(uv,v)}{v^m}$. Note that $\tilde f^{\mathrm{w}}_e=\tilde f_{E'}$. It is easy to see that
$$f^{\mathrm{w}}_d(x,y)=y^m \tilde f^{\mathrm{w}}_e(\frac{x}{y},y)=monomial\times\prod_{i=1}^{s}(a_{i}x^{m_0}-b_{i}y^{n_0})^{r_{i}}.$$
Since $E'$ is an edge of $\Gamma(\tilde f_{E'})$, $\tilde f_{E'}$ and then $f^{\mathrm{w}}_d(x,y)$ are not monomials. By $E$ we denote the convex hull of the support of $f^{\mathrm{w}}_d$. Then $E$ is an edge of $\Gamma(f)$ and $f_E=f^{\mathrm{w}}_d$. Therefore $Q_{E,i}=Q_{E',i}$ and hence $Q\to Q_{E,i}$.
\end{proof}
\begin{proof}[Proof of Theorem \ref{thm21}]
($\Longrightarrow$): Assume $f$ is not WHNND, then $f$ is not WHND along some edge $E$ of $\Gamma(f)$. By Lemma \ref{lm21}, there is an infinitely near point $Q_{E,i}$ of $0$, such that $m_{Q_{E,i}}>1$. Clearly, $Q_{E,i}$ is not special. It then follows from Proposition \ref{pro21} that
$$\delta(f)>\nu(f)=\delta_N(f).$$

($\Longleftarrow $): Assume now that $f$ is WHNND. To show $\delta(f)=\delta_N(f)$, it suffices to show that there is no infinitely near point $Q$ of $0$ such that $Q$ is not special and $m_Q>1$. We argue by contradiction. Suppose that there is such an infinitely near point $Q$. By Lemma \ref{lm23}, there is an edge $E$ of $\Gamma(f)$ such that $Q\to Q_{E,i}$ for some $i$, and then $m_{Q}\leq m_{Q_{E,i}}$. Since $f$ is WHND along $E$, it follows from Lemma \ref{lm22} that $m_{Q_{E,i}}=1$. Hence $m_{Q}\leq m_{Q_{E,i}}=1,$
which is a contradiction.  
\end{proof}

If $char(K)=0$ we have Milnor's famous formula $\mu(f)=2\delta(f)-r(f)+1$, where $r(f)$ is the number of branches of $f$. The formula is wrong in general if $char(K)>0$ but still holds if $f$ is NND by \cite[Thm. 4.13]{BGM10}. Using the general inequality
$$\mu_N(f)=2\delta_N(f)-r_N(f)+1\leq 2\delta(f)-r(f)+1\leq \mu(f)$$
from \cite{BGM10}, then Theorem \ref{thm11}, Proposition \ref{pro20} and Proposition \ref{pro22} imply

\begin{corollary}
Let $f\in K[[x,y]]$ be reduced. Then $f$ is INND if and only if $f$ is WNND and $\mu(f)=2\delta(f)-r(f)+1$.
\end{corollary}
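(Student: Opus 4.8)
The plan is to read the corollary off the two-sided estimate
$$\mu_N(f)=2\delta_N(f)-r_N(f)+1\le 2\delta(f)-r(f)+1\le\mu(f),$$
combined with the criterion from Theorem \ref{thm11} that INND is equivalent to $\mu(f)=\mu_N(f)<\infty$. The point is that each inequality above has already been given an edgewise meaning: $\delta_N\le\delta$ in Proposition \ref{pro22} and $r\le r_N$ in Proposition \ref{pro20}, with equality in each case characterizing a non-degeneracy property. So the strategy is to force all these inequalities to become equalities and then translate the resulting equalities back into WNND.

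For the implication ($\Leftarrow$) I would assume that $f$ is WNND and that $\mu(f)=2\delta(f)-r(f)+1$. Proposition \ref{pro22} then gives $\delta_N(f)=\delta(f)$ and Proposition \ref{pro20} gives $r_N(f)=r(f)$, so that
$$\mu_N(f)=2\delta_N(f)-r_N(f)+1=2\delta(f)-r(f)+1=\mu(f).$$
Since $f$ is reduced, $\delta(f)$ and $r(f)$ are finite, hence $\mu(f)=2\delta(f)-r(f)+1<\infty$; thus $\mu(f)=\mu_N(f)<\infty$ and $f$ is INND by Theorem \ref{thm11}.

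For the converse ($\Rightarrow$), suppose $f$ is INND. Theorem \ref{thm11} yields $\mu(f)=\mu_N(f)<\infty$, so the displayed chain collapses: both inequalities become equalities. The outer equality $2\delta(f)-r(f)+1=\mu(f)$ is one of the two assertions to be proved. The inner equality $2\delta_N(f)-r_N(f)+1=2\delta(f)-r(f)+1$ I would rewrite as
$$2\bigl(\delta(f)-\delta_N(f)\bigr)=r(f)-r_N(f).$$
Here the left-hand side is $\ge 0$ by Proposition \ref{pro22} while the right-hand side is $\le 0$ by Proposition \ref{pro20}; therefore both sides vanish, and I obtain \emph{simultaneously} $\delta(f)=\delta_N(f)$ and $r(f)=r_N(f)$.

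It remains to promote these two equalities to WNND, and this is the crux of the argument. From $\delta(f)=\delta_N(f)$, Theorem \ref{thm21} gives that $f$ is WHNND, and Proposition \ref{thm20} then upgrades this to $s_N(f)=r(f)$. Together with $r(f)=r_N(f)$ this yields $s_N(f)=r(f)=r_N(f)$, which by the characterization via $s_N\le r\le r_N$ (the unnumbered proposition preceding Proposition \ref{pro21}) is precisely the statement that $f$ is WNND. The main obstacle is exactly this last promotion: the bare collapse of the Milnor-type inequality only detects the weaker, weighted-homogeneous condition WHNND through $\delta=\delta_N$, and the example $(x+y)^2+y^3$ (WHNND with $\delta=\delta_N$, yet with $r\ne r_N$ and not WNND) shows that $\delta=\delta_N$ alone cannot suffice. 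What saves the argument is that the collapse delivers the second, independent equality $r=r_N$, so that the combinatorial identity $s_N=r$ valid for WHNND singularities (Proposition \ref{thm20}) can be combined with $r=r_N$ to recover the full WNND condition.
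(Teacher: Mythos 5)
Your proof is correct and follows the same route as the paper, whose entire argument is the one-line observation that the chain $\mu_N(f)=2\delta_N(f)-r_N(f)+1\leq 2\delta(f)-r(f)+1\leq\mu(f)$ collapses under Theorem \ref{thm11}. In fact you supply a step the paper's citation glosses over: it names only Theorem \ref{thm11} and Propositions \ref{pro20} and \ref{pro22}, which by themselves do not recover WNND in the forward direction (they give only the one-way implications WNND $\Rightarrow$ $r=r_N$ and WNND $\Rightarrow$ $\delta=\delta_N$), and your promotion --- $\delta=\delta_N$ gives WHNND by Theorem \ref{thm21}, hence $s_N(f)=r(f)$ by Proposition \ref{thm20}, which together with $r(f)=r_N(f)$ yields $s_N(f)=r_N(f)$ and thus WNND via Proposition \ref{pro23} --- is precisely the missing link, and you correctly flag (with the example $(x+y)^2+y^3$) why $\delta=\delta_N$ alone could not suffice.
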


\begin{remark}{\rm
\begin{itemize}
\item[(1)] The difference $\mathrm{wvc}(f):=\mu(f)-2\delta(f)+r(f)-1$ counts the number of {\em wild vanishing cycles} of (the Milnor fiber) of $f$  (cf. \cite{Del73},  \cite{MW01}, \cite{BGM10}), which vanishes if $char(K)=0$ or if $f$ is INND.

\item[(2)] $\mathrm{wvc}(f)$ is computable for any given $f$. This follows since $\mu(f)$ is computable by a standard basis computation w.r.t. ~a local ordering (cf. \cite{GP08}) and $\delta(f)$ and $r(f)$ are computable by computing a Hamburger-Noether expansion (cf. \cite{Cam80}). Both algorithms are implemented in {\sc Singular}  (cf. \cite{GPS05}). 
\end{itemize}
}\end{remark}

\begin{example}{\rm
 Consider $f=x(x-y)^2+y^{7}$ and $g=x(x-y)^2+y^{7}+x^6$ and $\mathrm{char}\ (K)=3$. Using 
 {\sc Singular} we compute
$\mu(f)=8, \delta(f)=5,r(f)=3$ and $\mu(g)=8, \delta(f)=4,r(g)=2$. We have $\mathrm{wvc}(f)=0, \mathrm{wvc}(g)=1,\Gamma(f)=\Gamma(g)$ and $f$ is not INND. This shows
\begin{itemize}
\item INND is sufficient but not necessary for the absence of wild vanishing cycles,
\item the Newton diagram can not distinguish between singularities which have wild vanishing cycles and those which have not.
\end{itemize}
}\end{example}

Although we can compute the number of wild vanishing cycles, it seems hard to understand them.   
We like to pose the following

\begin{problem}
Is there any "geometric" way to understand the wild vanishing cycles, distinguishing them from the ordinary vanishing cycles counted by $2\delta-r+1$? Is there at least a "reasonable" characterization of those singularities without wild vanishing cycles?
\end{problem}

\subsubsection*{Acknowledgments}
We would like to thank the referee for his comments and suggestions which prompted us to improve the paper.

\end{document}